\newtheorem{theorem}{Theorem}
\newtheorem{lemma}{Lemma}
\newtheorem{fact}{Fact}
\newtheorem{corollary}{Corollary}
\theoremstyle{definition}
\newtheorem{definition}{Definition}
\newtheorem{remark}{Remark}
\newtheorem{question}{Question}
\begin{document}

\newcommand {\dd}{\textup{d}}
\newcommand{\RR}{\mathbb{R}}
\newcommand{\slipparameter}{\beta}
\newcommand{\eigparam}{\lambda}
\newcommand{\remaind}{{\rm rem}}
\newcommand{\steady}{{\rm steady}}
\newcommand{\boxshape}{{box}}
\newcommand{\torsforce}{{\nu}}

\newcommand{\CM}{{\cal CM}}
\newcommand{\AM}{{{\mathbf A}\mathbf M}}
\newcommand{\CBF}{{\cal CBF}}

\newcommand{\citeKWRob}{ Part II }
\newcommand{\citeKeconv}{ Part I }




\title{Inequalities for the fundamental Robin eigenvalue of the Laplacian for box-shaped domains}
\author{Grant Keady and Benchawan Wiwatanapataphee\\
Department of Mathematics and Statistics\\
Curtin University, Bentley, 6102\\
Western Australia}
\date{\today}                                           

\maketitle
\vspace{0.3cm}
\subsection*{Abstract}
This document consists of two papers, both submitted, and supplementary material.
The submitted papers are here given as Parts I and II.
Their abstracts are given at pages~\pageref{pg:absI} and~\pageref{pg:absII} respectively.
The supplementary material is given in appendices.

\bigskip
\noindent
The supplementary material includes material directed at answering questions
stated in, or implicit in, the submitted papers.
The maple code referenced in the supplements is available via\\
\verb$https://sites.google.com/site/keadyperthunis/home/papers$\\
One of the questions is as follows.
Let $\phi_1(z)=\arctan(1/z)/z$. ($\phi_1$ is, itself, completely monotone.)
Is the inverse of $\phi_1$, denoted $\mu$ in the following, also completely monotone?
Various approaches have been considered.
Direct calculation of the higher derivatives of $\mu$ --
with maple to handle the lengthy expressions -- and an inductive argument is
considered in work presented at page~\pageref{pg:pghigher}.
Some complex variable approaches are instigated at page~\pageref{pg:pgcomplex}.

\newpage

\begin{center}
{\huge{\textbf{\textsc{ Part I:
On functions and inverses, \\
both positive, decreasing and convex
}}}}
\end{center}

\bigskip
\section*{Abstract}
\label{pg:absI}
Any function $f$ from $(0,\infty)$ onto $(0,\infty)$
which is decreasing and convex
has an inverse $g$ which is positive and decreasing -- and convex.
When $f$ has  some form of generalized convexity we determine
additional convexity properties inherited by $g$.
When $f$ is positive, decreasing and $(p,q)$-convex, its inverse $g$ is $(q,p)$-convex.
Related properties which pertain when $f$ is a Stieltjes function are developed.
The results are illustrated with the Stieltjes function $f(x)=\arctan(1/\sqrt{x})/\sqrt{x}$, a function which
arises, via a transcendental equation,  in an application presented in Part II.
\bigskip

\section{Introduction}\label{sec:Intro}

If $f$ is positive and (strictly) decreasing  then, clearly, it has an inverse which is positive and decreasing:
the inverse will be denote by $g$.
If, in addition, $f$ is convex, it is easy to show that $g$ is also convex.
For ease of exposition, assume appropriate differentiability.
Denote derivatives with a prime.
Starting from $f(x)=y$, $g(y)=x$  and $f(g(y))=y$, and thence $f' g'=1$, a further differentiation gives
\begin{equation}
 f'' (g')^2 + f' g'' = 0 . 
 \label{eq:fgconv}
 \end{equation}
Using $f'<0$ it follows that, if one of $f$ or $g$ is convex, so is the other.
This result is well-known, as is the fact that one can remove the differentiability assumptions.
See~\cite{Mr08} and Proposition 1 of~\cite{HUML03}.

Elementary items like this will labelled as `Facts', reserving the word `Theorem'  for more
significant ones or those which the author has not found elsewhere.\\
\begin{fact}\label{fac:Fact1}
{The inverse of a positive, decreasing convex function is positive, decreasing and convex.}
\end{fact}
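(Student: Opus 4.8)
The plan is to treat the three claimed properties of $g$ separately, disposing of the easy two first and isolating convexity as the only real content. First I would observe that, since $f$ is a strictly decreasing bijection of $(0,\infty)$ onto $(0,\infty)$, the inverse $g$ is well defined on all of $(0,\infty)$, takes its values in $(0,\infty)$ (so $g$ is positive), and is itself strictly decreasing, simply because the inverse of an order-reversing bijection is order-reversing. None of this uses convexity or differentiability; it only uses that $f$ maps \emph{onto} $(0,\infty)$, which is what guarantees $g$ has the full domain $(0,\infty)$.

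For convexity I would deliberately sidestep the second-derivative identity~\eqref{eq:fgconv}. That identity already settles the matter under a smoothness hypothesis, since it rearranges to $g''=-f''(g')^2/f'$, and then $f'<0$ together with $f''\ge 0$ forces $g''\ge 0$. But to obtain the full strength of the Fact, with no differentiability assumed, I would argue straight from the definition of convexity. Fix $y_1,y_2\in(0,\infty)$ and $t\in[0,1]$, and set $x_i=g(y_i)$, so that $y_i=f(x_i)$. The convexity of $g$ to be proved is then the inequality $g\big(t y_1+(1-t)y_2\big)\le t x_1+(1-t)x_2$.

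The key step, and the single place where the decreasing hypothesis does genuine work, is the equivalence
\[
 g(a)\le b \quad\Longleftrightarrow\quad a\ge f(b),
\]
which holds because $g$ is decreasing and $g(f(b))=b$. Applying it with $a=t f(x_1)+(1-t)f(x_2)$ and $b=t x_1+(1-t)x_2$ turns the target inequality into
\[
 t f(x_1)+(1-t)f(x_2)\ \ge\ f\big(t x_1+(1-t)x_2\big),
\]
and the right-hand inequality is \emph{exactly} the convexity of $f$. Thus convexity of $g$ is not merely analogous to convexity of $f$, it is literally equivalent to it once the order-reversal has been invoked.

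I do not anticipate a serious obstacle here. The only points demanding care are bookkeeping ones: the decreasing map flips $\le$ into $\ge$, and one must check that it is flipped exactly once, so that the chain lands back on the convexity inequality for $f$ rather than its reverse. The remaining caution is to confirm that $g$ inherits \emph{strict} monotonicity and is defined on the whole of $(0,\infty)$, which is where the hypotheses that $f$ is strictly decreasing and onto are used.
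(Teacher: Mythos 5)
Your proof is correct, but it takes a genuinely different route from the paper's. The paper argues under a smoothness assumption: from $f(g(y))=y$ it derives $f'g'=1$ and then the identity $f''(g')^2+f'g''=0$ of equation~(\ref{eq:fgconv}), so that $f'<0$ forces $g''$ and $f''$ to have the same sign; the removal of the differentiability hypothesis is not carried out in the paper but delegated to~\cite{Mr08} and Proposition~1 of~\cite{HUML03}. You instead work straight from the chord definition of convexity, using the order-reversal equivalence $g(a)\le b \Longleftrightarrow a\ge f(b)$ (valid since $g$ is strictly decreasing with $g(f(b))=b$) to transform the convexity inequality for $g$ into exactly the convexity inequality for $f$; since $g$ is a bijection this also shows the two convexity statements are equivalent, not just that one implies the other. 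What your argument buys is a self-contained proof of the Fact at its stated level of generality -- no differentiability, no appeal to the literature. What the paper's computation buys is the identity~(\ref{eq:fgconv}) itself, which is not discarded after this Fact: its refinement~(\ref{eq:powgen}) is the engine behind the $(p,q)$-convexity transfer of Theorem~\ref{thm:thmpqqp}, so the differentiable calculation does double duty in the rest of the paper, whereas your definition-chasing argument, while cleaner here, would need to be redone (or abandoned in favour of derivatives) for those later results.
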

\medskip

Another elementary fact is\\
\begin{fact}\label{fac:Fact2}
{Both the sum, $f_0+f_1$, and the product, $f_0 f_1$, of a pair of positive, decreasing convex functions, $f_0$ and $f_1$, are positive, decreasing and convex.}
\end{fact}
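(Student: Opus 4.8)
The plan is to treat the sum first, where all three conclusions pass to $f_0+f_1$ termwise, and then the product, where only convexity calls for a genuine computation. For the sum, positivity and the decreasing property are immediate, since a sum of positive (respectively, decreasing) functions is positive (respectively, decreasing), and convexity is preserved under addition. I would simply record this; no differentiability is needed.

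For the product $f_0 f_1$, positivity is clear. For monotonicity I would follow the differentiable exposition already used for Fact~\ref{fac:Fact1} and compute
\[
(f_0 f_1)' = f_0' f_1 + f_0 f_1' .
\]
Reading off the signs $f_0'<0$, $f_1>0$, $f_0>0$, $f_1'<0$ shows both summands are negative, so $f_0 f_1$ is decreasing. The one place where all three hypotheses are used jointly is convexity of the product. Differentiating once more,
\[
(f_0 f_1)'' = f_0'' f_1 + 2 f_0' f_1' + f_0 f_1'' .
\]
The two outer terms are nonnegative by convexity and positivity of the factors; the crux is the cross term $2 f_0' f_1'$, which is nonnegative precisely because $f_0$ and $f_1$ decrease together, so $f_0'$ and $f_1'$ share a sign. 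I expect this cross term to be the only point that requires any thought — and it is where the hypothesis that \emph{both} functions are decreasing (rather than, say, one increasing) is genuinely used.

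To match the paper's standing remark that the differentiability assumption can be dropped, I would give the assumption-free form of exactly this inequality. For $x<y$, $\lambda\in[0,1]$, and $z=\lambda x+(1-\lambda)y$, convexity together with nonnegativity of each factor gives
\[
f_0(z)\,f_1(z) \le \bigl(\lambda f_0(x)+(1-\lambda)f_0(y)\bigr)\bigl(\lambda f_1(x)+(1-\lambda)f_1(y)\bigr).
\]
Expanding the right-hand side and subtracting the target value $\lambda f_0(x)f_1(x)+(1-\lambda)f_0(y)f_1(y)$ leaves exactly
\[
-\,\lambda(1-\lambda)\bigl(f_0(x)-f_0(y)\bigr)\bigl(f_1(x)-f_1(y)\bigr).
\]
Because $f_0$ and $f_1$ are both decreasing, the two differences share a sign, so this quantity is $\le 0$; hence $f_0(z)f_1(z)$ is bounded above by the target value, which is the convexity inequality for $f_0 f_1$. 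This monotone, Chebyshev-type identity is the discrete analogue of the sign of the cross term $2f_0'f_1'$ above, and is the heart of the argument; everything else is bookkeeping.
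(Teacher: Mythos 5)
Your proof is correct. Note, though, that the paper itself offers no proof of Fact~\ref{fac:Fact2}: it is simply declared to be elementary and well-known, in the same breath as Fact~\ref{fac:Fact1}. So there is no argument of the paper's to compare yours against; what you have done is supply the proof the paper omits, in the same spirit as the paper's treatment of Fact~\ref{fac:Fact1} (a differentiable computation, plus a remark on removing differentiability, here made precise). Your decomposition is the natural one: the sum case is termwise; for the product, the derivative computation $(f_0f_1)''=f_0''f_1+2f_0'f_1'+f_0f_1''$ correctly isolates the cross term $2f_0'f_1'$ as the one place where the hypothesis that \emph{both} functions decrease is essential, and your assumption-free Chebyshev-type identity
$$
\bigl(\lambda f_0(x)+(1-\lambda)f_0(y)\bigr)\bigl(\lambda f_1(x)+(1-\lambda)f_1(y)\bigr)-\bigl(\lambda f_0(x)f_1(x)+(1-\lambda)f_0(y)f_1(y)\bigr)
=-\lambda(1-\lambda)\bigl(f_0(x)-f_0(y)\bigr)\bigl(f_1(x)-f_1(y)\bigr)
$$
is exactly the discrete counterpart of that sign condition, and the algebra checks out. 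The only loose end is trivial: in the assumption-free setting you should also record that the product is decreasing, which follows from positivity alone via $f_0(x)f_1(x)\ge f_0(y)f_1(x)\ge f_0(y)f_1(y)$ for $x<y$; as you say, this is bookkeeping.
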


The preceding facts are well-known but straightforward developments, 
in which further convexity properties of $f$ yield further properties of its inverse,
do not appear to have had any systematic treatment.
In this connection a small development of equation~(\ref{eq:fgconv}) is useful:
\begin{equation}
 \frac{g''}{(g')^2} =  - \, \frac{f''}{f'}  \qquad{\rm so} \quad
 \frac{gg''}{(g')^2} =  - \, \frac{ x f''}{f'} . 
\label{eq:powgen}
\end{equation}

The background to this paper is that an application -- studied in~\citeKWRob -- required
knowledge of convexity or concavity properties of a function $\mu$ defined
through an implicit equation.
It happened that the inverse of this function, denoted $\phi_1$, and the closely related function $\phi_2$,
\begin{equation}
\phi_1(z)=\frac{1}{z} \, \arctan\left( \frac{1}{z} \right) ,\qquad
\phi_2(z)=\phi_1(\sqrt{z}) ,
\label{eq:phiarctan}
\end{equation}
happen to be completely monotone and have various other easily established properties.
The function $\phi_2$ is Stieltjes.
(For definitions, see~\S\ref{sec:CMStieltjes}.)
While calculations with the implicitly defined function were possible --
and, indeed were made before the vastly neater approach here was discovered -- 
the routine calculation effort was somewhat unpleasant, and, perhaps, sometimes ugly.
Avoiding such ugly calculation
led to the topic of this paper: determining convexity properties of the inverse of a function from
those of the original function.

Despite extensive searching, no systematic treatment of the topic of this paper has been found elsewhere.
Very simple and elegant results -- our Theorem 1 for example -- were waiting to be discovered.
We have attempted to report any previously published item concerning inverses related to ours.

There are two directions in this paper.\\
1. The first is to establish properties of $\phi_1$, $\phi_2$ and their inverses such as
are needed to establish the results in~\citeKWRob.
This goal is achieved.\\
2. Establishing properties of $\phi_1$ and $\phi_2$ proved to be easiest by beginning
by noting that they are completely monotone, and $\phi_2$ is Stieltjes.
Very early on in this study we obtained evidence, based on the first few derivatives of the 
inverses of the $\phi$, that the following questions might be answered affirmatively.
\begin{question}\label{que:q1}
Is the inverse of $\phi_2$ completely monotone? \\
Is the inverse of $\phi_1$ completely monotone (and, perhaps even Stieltjes)?
\end{question}
These questions  -- and the more general ones in~\S\ref{sec:CMStieltjes} -- remain unresolved.
We have attempted to summarise general facts about the inverses of Stieltjes functions
as concisely as possible: see~~\S\ref{sec:CMStieltjes}.
Some of the further material on $\phi_1$ and $\phi_2$ later in the paper has, as yet, not been used.
However, it may be useful to others in attempting to answer Questions~\ref{que:q1}.

\medskip

An outline of the paper follows.\\
In \S\ref{sec:secFirst}, by way of introduction to the general fact
that derivatives of a function $f$ allow one to find properties of its inverse, $g$,
we consider first derivatives.
A memorable result is that, if $f$ is a Stieltjes function not equal to a multiple of $1/x$, the inverse of $f$
is not a Stieltjes function.\\
%
In  \S\ref{sec:secSecond} we treat second derivatives and convexity matters.
The main result from this section is Theorem~\ref{thm:thmpqqp} which establishes that
if a positive, decreasing, convex function $f$ is $(p,q)$-convex, its
inverse $g$ is $(q,p)$-convex.
Various special cases of  `convexity with respect to means' are noted.\\
%
In~\S\ref{sec:CMStieltjes} we note convexity properties of Stieltjes functions.
The Stieltjes function $\phi_2$ is the key to neat proofs of properties of its inverse.\\
%
In \S\ref{sec:transcendental} we present 
the transcendental equation which led to this study,
and the properties of $\phi_1$, $\phi_2$ and their inverses $\mu$ and $\mu_{(2)}$.\\
%
In~\S\ref{sec:Othertr} we instance a few
other situations where a function is defined through a transcendental equation
and the methods of this paper might be useful.
This, however, is very much an aside to our main focus: properties of $\phi_2$, $\phi_1$
and their inverses.\\
%
In~\S\ref{sec:questions}
we conclude with a short discussion and comment on the questions asked earlier in the paper.

\medskip

In our application, Stieltjes functions, denoted ${\cal S}$, and/or their reciprocals, the
Complete Bernstein Functions, denoted $\CBF$ arise.
In view of this, and, possible future applications, we discuss these at various points in this paper,
notably in~\S\ref{sec:CMStieltjes} and subsequently.
As an instance of the sort of neat similarity that sometimes occurs we note the elementary fact:\\
{\it A  function $h : (0,\infty)\rightarrow{R}$ is convex (concave), if and only if $xh(1/x)$ is convex (concave)}.\\
This has a parallel, see~\cite{SSV} p61 equation (7.3):\\
{\it A function $h : (0,\infty)\rightarrow{R}$ is $\CBF$ if and only if $xh(1/x)$ is $\CBF$.}\\
{\it A function $s : (0,\infty)\rightarrow{R}$ is $\cal S$ if and only if $s(1/x)/x$ is $\cal S$.}

As an example concerning the last statement, we mention the Stieltjes function $\phi_2$
occuring in our application:
$$\phi_2(z)
:= \frac{1}{\sqrt{z}} \, \arctan\left( \frac{1}{\sqrt{z}} \right) ,\qquad{\rm and}\ \ 
\frac{1}{z}\,\phi_2(\frac{1}{z})
= \frac{\arctan(\sqrt{z})}{\sqrt{z}} . $$
The latter function is also a Stieltjes function, which is easily
established independently.
See the table in~\S\ref{subsec:subsecStieltjes}.

\section{First derivatives}\label{sec:secFirst}

This section is genuinely elementary, with extremely simple calculations.
Nevertheless it yields results which may not be very well known.

As mentioned earlier, an interest arising from our $\phi_2$ being Stieltjes is
what can be said about the inverses of Stieltjes functions.
Completely monotone and Stieltjes functions are treated in \S\ref{sec:CMStieltjes},
but some easy results can be stated now.
The first concerns first derivatives only.
\medskip

\begin{fact}\label{fac:Fact3}\ {Let $f$ be a positive decreasing (differentiable) function on $x>0$.\\
(i) Each of the functions $f(\frac{1}{x})/x$ and $1/(x f(x))$ is positive and decreasing iff
\begin{equation}
x f'(x) + f(x) >0\qquad {\rm for\ \ } x>0\  .
\label{eq:resf1oxox}
\end{equation}
(ii) The set of positive decreasing functions satisfying inequality~(\ref{eq:resf1oxox}) is closed under addition.\\
(iii) For any positive, decreasing $f$ satisfying~(\ref{eq:resf1oxox}),
its inverse, denoted by $g$, is positive, decreasing, and
\begin{equation}
 y g'(y) + g(y) < 0 \qquad {\rm for\ \ } y>0 \  .
 \label{eq:resg1oxox}
\end{equation}
Hence that each of the functions 
$g(\frac{1}{y})/y$ and $1/(y g(y))$
is positive and increasing.}
\end{fact}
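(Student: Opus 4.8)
The plan is to prove all three parts by direct differentiation, exploiting the observation that the single quantity $xf'(x)+f(x)$ governs every assertion in the Fact.

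First I would dispatch (i) by computing the two derivatives explicitly. Writing $h_1(x)=f(1/x)/x$ and $h_2(x)=1/(xf(x))$, positivity of both is immediate from $f>0$ and $x>0$. Differentiating $h_2$ gives
\[
 h_2'(x) = -\,\frac{f(x)+xf'(x)}{(xf(x))^2},
\]
so $h_2$ is decreasing precisely when $f(x)+xf'(x)>0$, which is~(\ref{eq:resf1oxox}); this single line already supplies both implications for $h_2$. For $h_1$ the quotient and chain rules yield, after the substitution $u=1/x$,
\[
 h_1'(x) = -\,\frac{u f'(u)+f(u)}{x^2}, \qquad u=\frac1x,
\]
and since $u$ ranges over $(0,\infty)$ as $x$ does, $h_1$ is decreasing on $(0,\infty)$ iff~(\ref{eq:resf1oxox}) holds. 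Thus each of $h_1,h_2$ is decreasing iff~(\ref{eq:resf1oxox}) holds, giving the stated equivalence.

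Part (ii) I expect to be immediate from linearity: the operator $f\mapsto xf'(x)+f(x)$ is linear, hence additive, while sums of positive decreasing functions are again positive and decreasing. So if $f_0$ and $f_1$ each satisfy~(\ref{eq:resf1oxox}) then $x(f_0+f_1)'+(f_0+f_1)$ is the sum of two positive quantities, and the closure claim follows with no further work.

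For part (iii) the key step is to transport the inequality through the inverse. Starting from $f'g'=1$ (the relation derived just before~(\ref{eq:fgconv})), write $g'(y)=1/f'(g(y))$ and set $x=g(y)$, $y=f(x)$. Then
\[
 y\,g'(y)+g(y) = \frac{f(x)}{f'(x)}+x = \frac{f(x)+xf'(x)}{f'(x)}.
\]
The numerator is positive by~(\ref{eq:resf1oxox}) while the denominator is negative because $f$ is decreasing, so the ratio is negative: this is exactly~(\ref{eq:resg1oxox}). That $g$ is positive and decreasing is the elementary observation recorded in the Introduction (a positive strictly decreasing bijection of $(0,\infty)$ has a positive decreasing inverse). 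Finally, the concluding ``hence'' is just part (i) reread for $g$ with the reversed inequality: repeating the two derivative computations with $g$ in place of $f$ and invoking $v g'(v)+g(v)<0$ shows $(g(1/y)/y)'>0$ and $(1/(yg(y)))'>0$, so both are increasing. The computations are wholly elementary; the only place demanding care is the sign bookkeeping --- tracking the factor $-1/x^2$ from the chain rule in the $h_1$ computation and the sign of $f'$ in part (iii) --- together with noting that the substitutions $u=1/x$ and $x=g(y)$ are bijections of $(0,\infty)$, which is what lets a pointwise inequality translate into the inequality everywhere. I do not anticipate a genuine obstacle beyond this, nor any need for differentiability beyond what the statement already grants.
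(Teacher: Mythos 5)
Your proposal is correct and follows essentially the same route as the paper: routine differentiation for (i), linearity plus closure of positive decreasing functions for (ii), and in (iii) the same substitution $x=g(y)$, $f'(x)=1/g'(y)$ that turns inequality~(\ref{eq:resf1oxox}) into~(\ref{eq:resg1oxox}) via the sign of $f'$ (equivalently $g'$). The only difference is that you carry out explicitly the derivative computations the paper dismisses as ``routine,'' which is fine.
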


\begin{proof} (i) follows from routine differentiation. (ii) is obvious.\\
(iii) Writing $x=g(y)$ and $y=f(x)$ and noting $f'(x)=1/g'(y)$,
inequality~(\ref{eq:resf1oxox}) rewrites to
$$ \frac{g(y)}{g'(y)}+y >0 .$$
Since $g'(y)<0$, inequality~(\ref{eq:resg1oxox}) follows.
The result in the final sentence is established with routine diffferentiation much as in (i).
\end{proof}

\medskip
\noindent{\bf\sc Example.}
\begin{align*}
f(x)
&= \frac{1}{\sqrt{x}}, \qquad x f'(x) + f(x)= \frac{1}{2\sqrt{x}},\\
g(y)
&= \frac{1}{y^2}, \  \qquad  y g'(y) + g(y)= - \frac{1}{y^2} .
\end{align*}

\medskip
\begin{corollary}\label{cor:corfac3} 
{The only (positive decreasing) Stieltjes functions $f$ whose inverses
are also Stieltjes functions are the positive multiples of  (the involution) $f(x)=1/x$.
}
\end{corollary}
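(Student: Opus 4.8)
The plan is to argue by contradiction, exploiting the inversion symmetry \emph{$s\in{\cal S}$ iff $s(1/x)/x\in{\cal S}$} recorded above, together with Fact~\ref{fac:Fact3}. The guiding observation is that Fact~\ref{fac:Fact3} ties the sign of $xf'(x)+f(x)$ to the monotonicity of $f(1/x)/x$: the Stieltjes hypothesis on $f$ forces this auxiliary function to be \emph{decreasing}, while the Stieltjes hypothesis on the inverse forces the analogous function built from $g$ to be \emph{increasing}. These are incompatible unless the governing inequalities degenerate to equalities, and the degenerate case is exactly the family $f(x)=c/x$.

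First I would dispose of the easy direction and reduce to the generic case. A Stieltjes function is smooth, positive and decreasing on $(0,\infty)$, so Fact~\ref{fac:Fact3} applies. Given $f\in{\cal S}$, set $h(x):=f(1/x)/x$, which is again Stieltjes by the inversion symmetry. If $h$ is constant, $h\equiv c$, then $f(1/x)=cx$, i.e.\ $f(x)=c/x$; this is the exceptional family, and one checks directly that its inverse $g(y)=c/y$ is Stieltjes, so these $f$ do qualify. Otherwise $h$ is a non-constant Stieltjes function, hence strictly decreasing, and by the computation underlying Fact~\ref{fac:Fact3}(i) this is equivalent to the strict inequality
\begin{equation*}
x f'(x) + f(x) > 0 \qquad (x>0).
\end{equation*}

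Next I would feed this into Fact~\ref{fac:Fact3}(iii). Since $f$ is positive, decreasing and satisfies the displayed strict inequality, its inverse $g$ is positive, decreasing and obeys $y g'(y)+g(y)<0$ for all $y>0$; equivalently $g(1/y)/y$ is strictly increasing. But if $g$ were also Stieltjes, the inversion symmetry would make $g(1/y)/y$ Stieltjes, hence completely monotone, hence decreasing, contradicting that it is strictly increasing. Therefore $g$ cannot be Stieltjes except in the case $f(x)=c/x$, which is the assertion of the corollary.

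The one point requiring care, and the main obstacle, is the strictness: I must confirm that a non-constant Stieltjes function is strictly, not merely weakly, decreasing, so that $xf'(x)+f(x)>0$ is genuinely strict and Fact~\ref{fac:Fact3}(iii) returns a strict conclusion for $g$. This holds because $-h'$ is itself completely monotone when $h$ is Stieltjes, and a completely monotone function that is not identically zero is strictly positive throughout; hence $h'<0$ everywhere unless $h$ is constant. With that in hand, everything else is a direct application of the two quoted facts, the degenerate family $f=c/x$ being isolated cleanly as the sole situation in which both symmetry-inequalities collapse to equalities.
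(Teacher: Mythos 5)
Your proposal is correct, and it is a legitimate proof of Corollary~\ref{cor:corfac3}; it fleshes out precisely the route that the paper's proof mentions in its opening sentence (``one can appeal to the results that \dots $f(1/x)/x$ \dots is Stieltjes'') but then does not carry out. The paper instead establishes the key inequality~(\ref{eq:resf1oxox}) by a direct computation on the kernel $1/(x+t)$, namely $x\frac{d}{dx}\frac{1}{x+t}+\frac{1}{x+t}=\frac{t}{(x+t)^2}$, integrated against the representation~(\ref{eq:StDef}); you instead derive it from the inversion symmetry ($f\in{\cal S}$ implies $f(1/x)/x\in{\cal S}$) together with the strict positivity of a nonzero completely monotone function. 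Both proofs then finish identically, via Fact~\ref{fac:Fact3}(iii) and the observation that a Stieltjes inverse would have to make $g(1/y)/y$ decreasing. What your route buys is rigour at exactly the delicate point: the paper's kernel inequality is \emph{not} strict at $t=0$, and the term $a_1/z$ in~(\ref{eq:StDef}) contributes zero to $xf'+f$, so the paper's assertion that \emph{every} Stieltjes function satisfies the strict inequality~(\ref{eq:resf1oxox}) is literally false for multiples of $1/x$ --- the very exceptional family the corollary is about --- and the paper never explicitly carves that case out. You do: constancy of $h(x)=f(1/x)/x$ characterises $f=c/x$, whose inverse $c/y$ is checked to be Stieltjes, and in the non-constant case complete monotonicity of $-h'$ gives the needed everywhere-strict sign. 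What the paper's route buys, conversely, is that the direct computation displays exactly which pieces of the representation ($a_0$, the mass of the measure away from $t=0$) force strictness, which is useful for the generalizations to higher derivatives in equations~(\ref{eq:eqxpt2})--(\ref{eq:eqxpt3}) that immediately follow in the text.
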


\begin{proof} Proof. One can appeal to the results that for any Stieltjes function $f$
each of the functions
$f(\frac{1}{x})/x$ and $1/(x f(x))$ is Stieltjes.
This can also be proved directly, as follows.
Since, for $t\ge{0}$,
\begin{equation}
x \frac{d}{dx}\frac{1}{(x+t)} \ +\ \frac{1}{(x+t)}= \frac{t}{(x+t)^2}>0 ,
\label{eq:eqxpt1}
\end{equation}
the function $1/(x+t)$ satisfies inequality~(\ref{eq:resf1oxox}), and, hence,
using equation~(\ref{eq:StDef}),
so does any Stieltjes function.

Any Stieltjes function is positive and decreasing.
The previous paragraph ensures that Fact~\ref{fac:Fact3} applies and so
inequality~(\ref{eq:resf1oxox}) is satisfied

Finally the inverse $g$ of $f$ does not satisfy inequality~(\ref{eq:resf1oxox})
so it is not Stieltjes.
\end{proof}


 (As an aside here, we remark that there are many completely monotone involutions
 mapping $(0,\infty)$ to $(0,\infty)$, and, as in the previous Corollary,
 none except $1/x$ is Stieltjes.)
 
 \medskip
 As we have already noted $\phi_2$ satisfies inequality~(\ref{eq:resf1oxox}).
 The inverse of $\phi_1$, denoted $\mu$ below, also satisfies inequality~(\ref{eq:resf1oxox})
 (and, in so doing, shares this property with Stieltjes functions).
 
 \medskip
 As a lead in to consideration of higher derivatives we remark that equation~(\ref{eq:eqxpt1})
 generalises in several ways, for example
 \begin{equation}
 \frac{d^n}{d x^n}\left( \frac{x^n}{x+t}\right) 
 =\frac{ n!\, t^n}{(x+t)^{n+1} } ,
 \label{eq:eqxpt2}
 \end{equation}
 and
 \begin{equation}
\frac{d}{d x}\left(x^n \frac{d^{n-1}}{d x^{n-1}}\left( \frac{1}{x+t}\right) \right)
 =\frac{(-1)^{n-1} n!\, x^{n-1} t}{(x+t)^{n+1} } , 
 \label{eq:eqxpt3}
 \end{equation}
The $n=2$ version of the latter is the $(p=-1,q=1)$ result on a case of
$(p,q)$-concavity of Stieltjes functions.
See Definition~\ref{def:pq}.
Theorem~\ref{thm:Stieltjes1ox} is a general statement of the
HA-concavity of Stieltjes functions.

\section{Second derivatives and $(p,q)$-convexity}\label{sec:secSecond}

\begin{definition}\label{def:pq}
The function $f$ is {\it $(p, q)$-convex} ({\it $(p, q)$-concave}) if and only if 
$x \mapsto x^{1-p} f'(x)(f(x))^{q-1}$  is increasing (decreasing).
See~\cite{BY14},~~\cite{Ba10}.
\end{definition}

Special cases arise sufficiently frequently that there are other notations.
There is some literature, notably\cite{AVV07}, in connection with `convexity with respect to means',
and the  letters A for `arithmetic',
G for `geometric',
and H for `harmonic' are used to label these.
For example, AA-convex is ordinary convexity,
AG-convex means log-convex, etc.
The set of $(p,1)$-convex functions is obviously closed under addition.
$(1,q)$-convexity is related to power-convexity defined and discussed below.

\begin{center}
\begin{tabular}{|c|c|c|c|}
\hline
$(p,q)$& MN& & \\
\hline
(1,1)&    AA& $f(t)$ convex in $t$& $f\left(\frac{x_0+x_1}{2}\right)\le \frac{f(x_0)+f(x_1)}{2}$\\
        &          &  decreasing in $t$& \\
        \hline
(1,0)&    AG& $\log(f(t))$ convex in $t$&  $f\left(\frac{x_0+x_1}{2}\right)\le \sqrt{f(x_0) f(x_1)}$\\
        &          &  decreasing in $t$ & \\
        \hline
(1,-1)&   AH& $1/f(t)$ concave in $t$& $f\left(\frac{x_0+x_1}{2}\right)\le \frac{2f(x_0)\, f(x_1)}{f(x_0)+f(x_1)}$\\
         &          & increasing in $t$ & \\
         \hline
(0,1)&    GA& $f(\exp(t))$ convex in $t$& $f(\sqrt{x_0 x_1})\le \frac{f(x_0)+f(x_1)}{2}$  \\
         &          &  decreasing in $t$ & \\
         \hline
(0,0)&   GG&  $\log(f(\exp(t)))$ convex in $t$&  $f(\sqrt{x_0 x_1})\le \sqrt{f(x_0)\,f(x_1)}$ \\
         &          &  decreasing in $t$ & \\
         \hline
(0,-1) &    GH & $1/f(\exp(t))$ is concave in $t$& $f(\sqrt{x_0 x_1})\le \frac{2f(x_0)\, f(x_1)}{f(x_0)+f(x_1)}$ \\
          &          & increasing in $t$& \\
          \hline
(-1,1)&   HA& $f(1/t)$ convex in $t$&  $ f(\frac{2x_0\,x_1}{x_0+x_1})\le \frac{f(x_0)+f(x_1)}{2}$ \\
         &          & increasing in $t$ & \\
         \hline
(-1,0)&   HG&$\log(f(1/t))$ is convex in $t$  & $ f(\frac{2x_0\,x_1}{x_0+x_1})\le \sqrt{f(x_0)\,f(x_1)}$ \\
          &        & increasing in $t$ & \\
          \hline
(-1,-1)&  HH& $1/f(1/t)$ concave in $t$& $f(\frac{2x_0\,x_1}{x_0+x_1})\le \frac{2f(x_0)\, f(x_1)}{f(x_0)+f(x_1)}$   \\
         &          &   decreasing in $t$ & \\
\hline
\end{tabular}
\end{center}

Inclusions of these sets of MN-convex (MN-concave) functions are well established.
See~\cite{AVV07}.
The arrows are to be read, as for example from the entry at right:\\
$f$ AG-convex (log-convex) implies $f$ is AA-convex (ordinarily convex).\\
The vertical arrows require of the function $f$ that it be positive, decreasing.

$$
\begin{array}{ccccc}
{\rm AH} & \Longrightarrow & {\rm AG} & \Longrightarrow & {\rm AA}\\
\Uparrow& & \Uparrow & & \Uparrow\\
{\rm GH} & \Longrightarrow & {\rm GG} & \Longrightarrow & {\rm GA}\\
\Uparrow& & \Uparrow & & \Uparrow\\
{\rm HH} & \Longrightarrow & {\rm HG} & \Longrightarrow & {\rm HA}\\
& & & & \\
& &{\rm convex}& &\\ 
& & & & 
\end{array}
\qquad\qquad\qquad
\begin{array}{ccccc}
{\rm AH} & \Longleftarrow & {\rm AG} & \Longleftarrow & {\rm AA}\\
\Downarrow& & \Downarrow & & \Downarrow\\
{\rm GH} & \Longleftarrow & {\rm GG} & \Longleftarrow & {\rm GA}\\
\Downarrow& & \Downarrow & & \Downarrow\\
{\rm HH} & \Longleftarrow & {\rm HG} & \Longleftarrow & {\rm HA}\\
& & & & \\
& &{\rm concave}& &\\ 
& & & & 
\end{array}
$$

A few remarks on the cases when$MN$-convexity is closed under addition are in order.
(This is partly because this is useful for functions defined by integrals whose ntegrands 
are products of positive functions with kernels having such convexity.
We use this in~\S\ref{sec:CMStieltjes}.
An illustration of the use when a set of functions is closed under addition has
already been given in the first paragraph of the proof of Corollary~\ref{cor:corfac3}
in which Fact~\ref{fac:Fact3}(ii) is a ley ingredient.)
\begin{itemize}  
\item It is clear from the Definition~\ref{def:pq} that when $q=1$, the set of
$(p,1)$-convex (concave) functions is  closed under addition, and obviously remains so when
the functions are also positive and decreasing.
Thus the AA, GA and HA entries above form (convex) cones.

\item There are other sets which form cones, notably the positive decreasing
AHconvex functions and AG-convex functions.
This case, $p=1$ , and $q\le{1}$ is often treated in its own right
as {\it power-convex} functions: see Definition~\ref{def:defpowq} below.

\item The proof that the positive log-convex, AG-convex, functions form a cone can be adapted to
the GG-convex and HG-convex functions.
The AGM inequality can be used to establish, for positive numbers
$$\sqrt{ab}+\sqrt{cd}\le \sqrt{(a+c)(b+d)} .$$
Applying this in the form
$$\sqrt{f_0(x)f_0(y)}+\sqrt{f_1(x)f_1(y)}
\le\sqrt{(f_0+f_1)(x)\, (f_0+f_1)(y)} ,
$$
yields the results.

\item See also~\cite{NiP04} p91 Exercise 2.
\end{itemize}

\begin{definition}\label{def:defpowq}
A nonnegative function $f$ is said to be {\it $q$-th power convex} if, for $q\ne{0}$, $q (f(x)^q$ is convex, and
{\it $0$-power convex} if $\log(f)$ is convex,
also called log-convex, or  as in~\cite{AVV07}, AG-convex. See~\cite{Li82}.
\end{definition}
(When $q<0$, and $f$ is $q$-th power convex, then $f(x)^q$ is concave.)\\
{\it if $f\ge{0}$ is $q_0$-th power convex then it is $q_1$-th power convex for $q_1\ge{q_0}$.}\\
Another property, used here and again in our application in~\citeKWRob, is, from p159 of~\cite{Li82}:\\
{\it If $q\le{1}$ then the set of positive, decreasing, convex functions which are
$q$-th power convex is closed under addition.
This set is a convex cone in appropriate function spaces.}

Whether the set is closed under multiplication is less important in our present application.
As an aside we recall Fact~\ref{fac:Fact2} and mention
(noting that definitions from~\S\ref{sec:CMStieltjes} are needed for later items):
\begin{fact}\label{fac:FactProd}
{\it The product of AG-convex functions is AG-convex. The product of AG-concave functions is AG-concave.\\
The product of GG-convex functions is GG-convex. The product of GG-concave functions is GG-concave.\\
The product of two functions $f_0,~f_1\in$AH is not, in general in AH, but
$\sqrt{f_0 f_1}\in$AH, i.e.
$1/\sqrt{f_0 f_1}$ is concave.\\
The product of completely monotone functions is completely monotone.\\
The product of two Stieltjes functions $f_0,~f_1$ is not, in general Stieltjes, but
$\sqrt{f_0 f_1}$ is.}
\end{fact}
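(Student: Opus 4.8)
The plan is to split the statement according to the structure of each class, since the five assertions are of three different kinds: AG and GG are \emph{additive} after a logarithmic change of variable, AH and Stieltjes fail to be closed under products but pass the test after taking a geometric mean, and complete monotonicity is a Leibniz-rule computation. I would treat AG and GG first, as they are immediate. By Definition~\ref{def:pq} a function is AG-convex exactly when $\log f$ is convex; since $\log(f_0 f_1)=\log f_0+\log f_1$ is a sum of convex functions, $f_0 f_1$ is AG-convex, and the concave case is identical. The GG claim reduces to the same computation after the substitution $x=e^t$: GG-convexity of $f$ means convexity of $t\mapsto \log f(e^t)$, and $\log\big((f_0 f_1)(e^t)\big)=\log f_0(e^t)+\log f_1(e^t)$ is again a sum of convex functions. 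These are one-line consequences of additivity of $\log$ together with the stability of (mid)convexity under sums, so there is no obstacle.

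For the AH claim, note that $f$ is AH-convex iff $1/f$ is concave, so writing $u_j=1/f_j$ (positive, concave) the assertion that $1/\sqrt{f_0 f_1}$ is concave becomes: the geometric mean $\sqrt{u_0 u_1}$ of two positive concave functions is concave. I would prove this by midpoint concavity. At $x=(x_0+x_1)/2$, concavity gives $u_j(x)\ge \tfrac12(u_j(x_0)+u_j(x_1))$, whence $\sqrt{u_0(x)u_1(x)}\ge \sqrt{\tfrac12(u_0(x_0)+u_0(x_1))\cdot \tfrac12(u_1(x_0)+u_1(x_1))}$; applying the elementary AGM inequality $\sqrt{ab}+\sqrt{cd}\le \sqrt{(a+c)(b+d)}$ recorded earlier (with $a=u_0(x_0)$, $b=u_1(x_0)$, $c=u_0(x_1)$, $d=u_1(x_1)$) identifies the right-hand side with $\tfrac12\big(\sqrt{u_0(x_0)u_1(x_0)}+\sqrt{u_0(x_1)u_1(x_1)}\big)$, and continuity upgrades midpoint concavity to concavity. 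For the failure of the product itself I would exhibit the Stieltjes kernels $f_0(x)=1/(x+s)$ and $f_1(x)=1/(x+t)$, which are AH since each $1/f_j$ is linear: then $1/(f_0 f_1)=(x+s)(x+t)$ is a strictly convex quadratic, so $f_0 f_1$ is not AH-convex (already $f_0=f_1=1/x$ suffices). The complete-monotonicity statement is then immediate from Leibniz's rule, since $(-1)^n(f_0 f_1)^{(n)}=\sum_{k=0}^n \binom{n}{k}\big[(-1)^k f_0^{(k)}\big]\big[(-1)^{n-k}f_1^{(n-k)}\big]$ is a sum of products of nonnegative quantities.

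The Stieltjes assertion again hinges on a geometric mean, but now requires the complex-analytic characterization: $f$ is Stieltjes iff $f\ge 0$ on $(0,\infty)$ and $f$ extends holomorphically to the cut plane $\mathbb{C}\setminus(-\infty,0]$ with $\mathrm{Im}\,f(z)\le 0$ whenever $\mathrm{Im}\,z>0$ (see~\S\ref{sec:CMStieltjes} and~\cite{SSV}). A nonzero Stieltjes function is zero-free on the cut plane, mapping the open upper half-plane strictly into the lower half-plane, so each $f_j$ admits a holomorphic branch $\sqrt{f_j}$ that is positive on $(0,\infty)$, and I set $\sqrt{f_0 f_1}:=\sqrt{f_0}\,\sqrt{f_1}$. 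For $z$ in the upper half-plane $\arg f_j(z)\in[-\pi,0]$, hence $\arg\sqrt{f_0 f_1}=\tfrac12(\arg f_0+\arg f_1)\in[-\pi,0]$; thus $\sqrt{f_0 f_1}$ maps the upper half-plane into the lower half-plane and is positive on $(0,\infty)$, so the characterization gives $\sqrt{f_0 f_1}\in\mathcal{S}$. The counterexample for the product itself is $f_0=f_1=1/x$, whose product $x^{-2}$ is not Stieltjes, as only $x^{-\alpha}$ with $0\le\alpha\le 1$ is.

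I expect the genuine obstacle to be this last, Stieltjes, geometric-mean result: one must justify the single-valued holomorphic square root on the cut plane (via zero-freeness of the factors) and then argue that the half-plane mapping property survives the combined operation of multiplying and taking the square root, rather than merely each operation in isolation. The remaining four items are essentially formal.
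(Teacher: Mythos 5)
Your proofs are all correct, but the comparison here is one-sided: the paper offers no proof of this Fact at all. It is stated as one of the ``elementary items\dots labelled as Facts,'' with the GG items deferred to \cite{NiP04} Lemma 2.3.4 and the Stieltjes geometric-mean claim resting on the citation, in \S\ref{subsec:subsecStieltjes}, of \cite{SSV} Proposition 7.10 (``logarithmic convexity'' of $\mathcal{S}$: $g_0^{1-\alpha}g_1^{\alpha}\in\mathcal{S}$ for $g_0,g_1\in\mathcal{S}$, $0<\alpha<1$, of which the present claim is the case $\alpha=1/2$). So what you have produced is the set of missing proofs rather than an alternative to an existing one. Your AG/GG arguments (additivity of $\log$, substitution $x=e^{t}$) are the standard ones and unimpeachable. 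Your AH argument is a pleasant economy: the AGM inequality $\sqrt{ab}+\sqrt{cd}\le\sqrt{(a+c)(b+d)}$ that you invoke is exactly the inequality the paper records in the bullet list immediately preceding the Fact, where it is used for sums (cones) of GG- and HG-convex functions; you redeploy it for the geometric mean of concave functions, plus midpoint concavity and continuity. Your Stieltjes argument via the cut-plane (Nevanlinna--Pick) characterization is essentially a reconstruction of the proof of \cite{SSV} Proposition 7.10, and you correctly isolate the two delicate points: zero-freeness of a nonzero Stieltjes function on $\mathbb{C}\setminus(-\infty,0]$ (so that a single-valued holomorphic square root exists, positive on $(0,\infty)$) and the bookkeeping of arguments under multiplication and square root. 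Two cosmetic quibbles only: in the AH step the AGM inequality bounds the right-hand side from below rather than ``identifies'' it; and a nonzero \emph{constant} Stieltjes function maps the upper half-plane to a point of $(0,\infty)$, not strictly into the open lower half-plane, but since you work with the closed range $[-\pi,0]$ for the argument this does not affect the conclusion.
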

(In connection with the GG functions, see also~\cite{NiP04} Lemma 2.3.4.)

\medskip
In the proof of Theorem~\ref{thm:thmpqqp} we use the notation
\begin{equation}
D(f(x),p,q)
 :=\frac{d}{dx}\left( x^{1-p} f'(x)(f(x))^{q-1}\right)
 \label{eq:Ddef}
 \end{equation}
 Before proving the theorem we note a simple identity
 (which can be used in connection with $\mu$ and $\mu^2$ in~\S\ref{subsec:Propmu}):
 $$
 \frac{D(f(x)^2,p,q)}{D(f(x),p,2q)}=2 \\
 $$


\begin{theorem}\label{thm:thmpqqp}
If a positive, decreasing, convex function $f$ is $(p,q)$-convex, its inverse $g$ is $(q,p)$-convex.
\end{theorem}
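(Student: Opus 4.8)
The plan is to reduce the claim to a single multiplicative identity relating the two ``test functions'' whose monotonicity encodes the two convexity properties, and then to read off the conclusion by tracking signs. Assuming differentiability throughout (as already adopted around equation~(\ref{eq:fgconv})), introduce
\[
\Phi(x) = x^{1-p} f'(x)\,(f(x))^{q-1}, \qquad \Psi(y) = y^{1-q} g'(y)\,(g(y))^{p-1}.
\]
By Definition~\ref{def:pq}, the hypothesis that $f$ is $(p,q)$-convex is exactly the statement that $\Phi$ is increasing (equivalently $D(f(x),p,q)\ge 0$ in the notation of~(\ref{eq:Ddef})), while the desired conclusion that $g$ is $(q,p)$-convex is exactly the statement that $\Psi$ is increasing. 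So the whole theorem becomes: $\Phi$ increasing $\Rightarrow$ $\Psi$ increasing.

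The key step is to establish the pointwise identity $\Phi(g(y))\,\Psi(y)=1$. Starting from $f(g(y))=y$ and differentiating gives $f'(g(y))\,g'(y)=1$, the same relation underlying~(\ref{eq:fgconv}) and~(\ref{eq:powgen}). Substituting $x=g(y)$ and $f(x)=y$ into $\Phi$ and multiplying by $\Psi(y)$, the powers of $g(y)$ cancel because $(1-p)+(p-1)=0$, the powers of $y$ cancel because $(q-1)+(1-q)=0$, and the two first-derivative factors combine as $f'(g(y))\,g'(y)=1$. Hence $\Phi(g(y))\,\Psi(y)=1$, i.e. $\Psi(y)=1/\Phi(g(y))$, and this is where the swap of the indices $p$ and $q$ comes from automatically.

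It then remains to convert this into the monotonicity conclusion, which is the step to handle with care because of the sign bookkeeping. Since $f$ is positive and decreasing, every factor of $\Phi$ is positive except $f'<0$, so $\Phi<0$; likewise $g>0$ and $g'<0$ give $\Psi<0$ (here the standing convexity hypothesis is what places $g$ in the natural setting of Fact~\ref{fac:Fact1}, though the argument itself uses only positivity and the decreasing property). Now $\Phi$ is increasing by hypothesis and $g$ is decreasing, so the composite $\Phi\circ g$ is decreasing in $y$ and negative; taking reciprocals of a negative decreasing function yields an increasing function, as $\frac{d}{dy}\,\Psi(y) = -\,(\Phi\circ g)'(y)\big/\big(\Phi(g(y))\big)^{2}>0$. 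Therefore $\Psi$ is increasing and $g$ is $(q,p)$-convex. The main obstacle is precisely this last double reversal: passing through $g$ (decreasing) and then through the reciprocal (order-reversing on the negatives) must compose to ``increasing,'' so the two sign reversals have to be tracked together rather than separately.
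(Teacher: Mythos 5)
Your proof is correct, and it takes a genuinely different route from the paper's. The paper argues at the level of second derivatives: it expands $D(f(x),p,q)$ from~(\ref{eq:Ddef}) into the differential inequality $x f'' + x(q-1)(f')^2/f + (1-p)f' > 0$, substitutes $f'=1/g'$, $f''=-g''/(g')^3$, $x=g(y)$, $y=f(x)$ (the relations behind~(\ref{eq:fgconv})), and rearranges, using $g'<0$, into the corresponding inequality characterizing $(q,p)$-convexity of $g$. Your argument never touches second derivatives: the reciprocal identity $\Phi(g(y))\,\Psi(y)=1$ follows from $f'(g(y))\,g'(y)=1$ alone, and the conclusion is read off by composing monotonicity of $\Phi$ with the decrease of $g$ and the order-reversal of the reciprocal map on negative functions. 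In effect the paper's computation is the differentiated form of your identity, so the two are closely related, but yours is structurally cleaner: the $p\leftrightarrow q$ swap emerges automatically from the exponent cancellation rather than from rearranging a long expression, the sign bookkeeping is confined to one step, and you need only that $\Phi$ be increasing in the sense of Definition~\ref{def:pq} (plus first-order differentiability for the inverse-function relation), not that $f$ be twice differentiable. What the paper's heavier computation buys in exchange is the explicit formula for $D(f,p,q)$, which it reuses later (e.g.\ in the appendix calculations establishing the convexity tables for $\phi_1$ and $\phi_2$); your proof, being identity-based, does not produce that formula as a by-product. One small point of care: where you write $\frac{d}{dy}\Psi(y)>0$, strictness requires $\Phi'>0$; with the definition as stated (increasing, possibly non-strictly) the inequality should be read as $\ge 0$, which is all the conclusion needs.
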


\begin{proof} 
 $f$ is $(p,q)$-convex iff
$$  x^p f^{1-q} D(f(x),p,q)
= x f'' + x\,(q-1)\, \frac{(f')^2}{f} + (1-p)\, f' > 0 . $$
Setting $f''=-g''/(g')^2$, $f'=1/g'$, $f(x)=y$ and $x=g(y)$ in the preceding equation gives
$$-\frac{g g''}{(g')^3}+ \frac{(q-1)g}{y (g')^2} +\frac{1-p}{g'}
=    - \frac{g(y g'' + (1-q)\, g' + \frac{y(p-1)\, (g' )^2}{g})}{y (g')^3} >0 $$
Using that $g'<0$ the term in parentheses in the numerator of the long expression above
is positive. This is the result that $g$ is $(q,p)$-convex.
\end{proof}

\medskip

There are corresponding results for positive, decreasing, $(p,q)$-concave functions.
Also, when the functions are increasing rather than decreasing,
convexity of $f$ gives concavity of $g$ and vice-versa.

\medskip

{\sc Example.} $D(1/x^\alpha,p,q)=x^{-1-p-\alpha q}\alpha(p+\alpha q)$.
From this the convexity properties of the positive, decreasing convex functions
 $1/\sqrt{x}$ and its inverse $1/y^2$ are indicated in
this diagram:
$$
\begin{array}{ccccc}
{\rm AH-vex} & \Longrightarrow & {\rm AG-vex} & \Longrightarrow & {\rm AA-vex}\\
 & &  & & \Uparrow\\
{\rm GH-ave} &  & {\rm GG} &  & {\rm GA-vex}\\
\Downarrow& &  & & \\
{\rm HH-ave} & \Longleftarrow & {\rm HG-ave} & \Longleftarrow & {\rm HA-ave}\\
& & & & \\
& &\frac{1}{\sqrt{x}}& &\\ 
& & & & 
\end{array}
\qquad
\begin{array}{ccccc}
{\rm AH-ave} &    & {\rm AG-vex} & \Longrightarrow & {\rm AA-vex}\\
 \Downarrow& &  & & \Uparrow\\
{\rm GH-ave} &   & {\rm GG} &  & {\rm GA-vex}\\
\Downarrow& & & & \Uparrow \\
{\rm HH-ave} & \Longleftarrow & {\rm HG-ave} &  & {\rm HA-vex}\\
& & & & \\
& &\frac{1}{y^2}& &\\ 
& & & & 
\end{array}
$$

\medskip
For our application to the functions $\phi_1$ and $\phi_2$ we have the differentiablility needed to
apply Theorem~\ref{thm:thmpqqp}.
However, for other applications, we note that there are other proof techniques.
With notation as in that theorem, $g$ the inverse of $f$,  obviously for any invertible $\alpha$ and $\beta$,
$(\beta\circ{f}\circ\alpha)^{-1} = \alpha^{-1}\circ{g}\circ\beta^{-1}$.
Hence, for example,  with id the identity and recip the reciprocal function taking $x$ to $1/x$,
\begin{align*}
{\mbox{\rm convex, decreasing\ }} f\ {\mbox{\rm is HA-convex\ }}
&\Longleftrightarrow&
{\rm id}\circ{f}\circ{\rm recip} \ \ {\mbox{\rm is convex, increasing}} \\
&\Longleftrightarrow&
{\rm recip}\circ{g}\circ{\rm id} \ \ {\mbox{\rm is concave, increasing}}\\
&\Longleftrightarrow&
{\mbox{\rm convex, decreasing }} g\ {\mbox{\rm is AH-convex. \ }}
\end{align*}
\medskip

A few results anticipating parts of the preceding theorem have been published.
For example, concerning the inverse of a GG functions,
we have the following, adapted from~\cite{NiP04} Lemma 2.3.4, items denoted with an a are 
additions not explicitly in their Lemma 2.3.4.)

\begin{fact}\label{fac:NiP04} {\it
  If a function $f$ is increasing, multiplicatively convex (GG), and one-to-one, then its inverse is multiplicatively concave 
(and vice versa).\\
(GG-a) If a function $f$ is decreasing, multiplicatively convex (GG), and one-to-one, then its inverse is multiplicatively convex.\\
(HH-a) If a function $f$ is increasing, HH-convex, and one-to-one, then its inverse is HH-concave 
(and vice versa).\\
If a function $f$ is decreasing, HH-convex, and one-to-one, then its inverse is HH- convex.
}
\end{fact}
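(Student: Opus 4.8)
The plan is to deduce all four parts from the two elementary sign rules already recorded around equation~(\ref{eq:fgconv}): writing $g=f^{-1}$ so that $g''=-f''\,(g')^2/f'$, an increasing $f$ forces $f''$ and $g''$ to have opposite signs, whereas a decreasing $f$ forces them to share a sign (this is the content of Fact~\ref{fac:Fact1} together with the remark, just after Theorem~\ref{thm:thmpqqp}, that for increasing functions convexity of $f$ gives concavity of $g$). The bridge to GG- and HH-convexity is the conjugation identity $(\beta\circ f\circ\alpha)^{-1}=\alpha^{-1}\circ g\circ\beta^{-1}$ displayed above, applied with $\alpha,\beta$ the coordinate changes that define these notions of convexity; this is precisely the manoeuvre already used there for the HA/AH pair.

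For the GG rows I would put $\tilde f:=\log\circ f\circ\exp$ and $\tilde g:=\log\circ g\circ\exp$. Since $\exp^{-1}=\log$, the conjugation identity gives $\tilde g=(\tilde f)^{-1}$; by the $(0,0)$ row of the table (equivalently Definition~\ref{def:pq} with $p=q=0$), $f$ is GG-convex (concave) exactly when $\tilde f$ is convex (concave); and as $\log,\exp$ are increasing, $f$ and $\tilde f$ share their monotonicity. The GG assertions are then immediate: if $f$ is increasing, $\tilde f$ is increasing, so the inverse $\tilde g$ of an increasing convex (concave) $\tilde f$ is concave (convex), which is the first statement and its converse; if $f$ is decreasing, $\tilde f$ is decreasing and convex, so the same-sign rule makes $\tilde g$ convex, which is (GG-a).

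For the HH rows I would use the reciprocal $r(x)=1/x$, an involution, and set $\hat f:=r\circ f\circ r$, $\hat g:=r\circ g\circ r$; with $\alpha=\beta=r$ and $r^{-1}=r$ the identity yields $\hat g=(\hat f)^{-1}$. Here the translation carries a deliberate twist: by the $(-1,-1)$ row, $f$ is HH-convex (concave) exactly when $\hat f(t)=1/f(1/t)$ is concave (convex), so HH-convexity corresponds to \emph{concavity} of the conjugate; and since $r$ is decreasing, $f$ and $\hat f$ have opposite monotonicity. Running this through the sign rules gives the HH assertions: when $f$ is increasing, $\hat f$ is increasing and concave, hence $\hat g$ is convex, i.e.\ $g$ is HH-concave (statement (HH-a), its converse following by swapping convex and concave); when $f$ is decreasing, $\hat f$ is decreasing and concave, so the same-sign rule makes $\hat g$ concave, i.e.\ $g$ is HH-convex, which is the final statement.

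The only step needing genuine care is the bookkeeping at each conjugation, namely tracking simultaneously whether ``convex'' is carried to convex or to concave and whether the monotonicity is preserved or reversed; for HH both reversals occur at once, and it is easy to mis-pair them. Everything else is the one-line sign chase in~(\ref{eq:fgconv}). Finally, to match the differentiability-free hypotheses of the statement I would note that each MN-convexity is an honest convexity (or concavity) of a conjugated function, so the conclusions read off equally from the midpoint means inequalities in the table, with smoothness removable exactly as for Fact~\ref{fac:Fact1} via~\cite{Mr08} and~\cite{HUML03}.
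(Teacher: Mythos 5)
Your four conclusions are all correct, and your route --- conjugating by $\exp/\log$ for the GG rows and by the reciprocal for the HH rows, then reading everything off the plain convexity-of-inverse sign rules from equation~(\ref{eq:fgconv}) --- is legitimate and essentially differentiability-free. It is a genuinely different presentation from the paper's recorded proof of Fact~\ref{fac:NiP04}, which is a one-line appeal to the derivative identities relating $g''$ to $f''$ for $C^2$ functions (i.e.\ the same calculus that drives Theorem~\ref{thm:thmpqqp}); but it coincides with the alternative technique the paper itself sketches immediately after Theorem~\ref{thm:thmpqqp}, where $(\beta\circ f\circ\alpha)^{-1}=\alpha^{-1}\circ g\circ\beta^{-1}$ is worked through for the HA/AH pair. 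What your version buys is exactly what you claim at the end: each MN-statement becomes an honest convexity statement about a conjugated function, so the smoothness hypotheses can be dropped via the midpoint inequalities, whereas the paper's proof is explicitly restricted to $C^2$ functions.

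There is, however, one false assertion that must be repaired: you write that ``since $r$ is decreasing, $f$ and $\hat f$ have opposite monotonicity.'' This is wrong. Conjugation by the reciprocal on \emph{both} sides reverses monotonicity twice: $f\circ r$ flips it, and the outer $r$ flips it back, so $\hat f=r\circ f\circ r$ has the \emph{same} monotonicity as $f$ (for instance $f(x)=1/x$ gives $\hat f(t)=1/t$, still decreasing, and $f(x)=x$ gives $\hat f(t)=t$, still increasing). Your subsequent case analysis quietly uses the correct same-monotonicity fact --- ``when $f$ is increasing, $\hat f$ is increasing and concave,'' ``when $f$ is decreasing, $\hat f$ is decreasing and concave'' --- which is why your conclusions match Fact~\ref{fac:NiP04}; but those sentences contradict the rule you stated two lines earlier. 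Had you actually applied ``opposite monotonicity,'' you would have concluded, for example, that an increasing HH-convex $f$ has an HH-\emph{convex} inverse, which is false. Replace the offending clause by the observation that the two reversals cancel (in contrast to the GG case, where $\exp$ and $\log$ are increasing and no reversal occurs at all); with that correction the argument is complete and correct.
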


\begin{proof}
Once again, simple proofs for $C^2$ functions use the identities relating 
derivatives of $g$, the inverse of $f$,  to those of $f$.
\end{proof}

\section{Completely monotone and Stieltjes functions}\label{sec:CMStieltjes}

\subsection{Higher derivatives.  Introduction}\label{subsec:subsecHigher}

This subsection is an aside to the main function of this paper, namely to
establish generalised convexity/concavity properties of $\phi_1$ and $\phi_2$
and their inverses, for use in~\citeKWRob.
The subsection is included as the questions here,
besides being of interest in their own right, may, if answered,
provide simpler and neater routes to the properties used n~\citeKWRob.
\medskip

If one knows a function $f$ is completely monotone or even Stieltjes
this provides information about derivatives of all orders.
In the same way as we have treated first derivatives in~\S\ref{sec:secFirst}
and second derivatives in~\S\ref{sec:secSecond} one may obtain an expression for
the higher derivatives of the inverse, $g$, in terms of those of the original function $f$.
The formulae are given in~\cite{Jo02}.

There are many ways that completely monotone and Stieltjes functions
either determine relations involving their derivatives or are characterised by these.
Concerning third derivatives of Stieltjes functions, consider $n=3$ in equations~(\ref{eq:eqxpt2}) and~(\ref{eq:eqxpt3}).
For a Stieltjes function $f$, not only is it completely monotone, with the sign information on the derivatives of $f$,
but as $f_*(x)=f(1/x)/x$ is also Stieltjes , we also have sign information from the derivatives of $f_*$.
Concerning completely monotone functions we remark that a result concerning their Hankel determinants
is stated near the middle of p167 of~\cite{Wi41}.

We do not investigate higher derivatives here, but believe they may be useful in
answering questions like the following:

\begin{question}\label{qes:q2}
(i) {\it What conditions (if any) are needed to ensure that a Stieltjes function $f$ with range $(0,\infty)$ is
GA-convex?}\\
(ii) {\it What conditions (if any) are needed to ensure that the inverse $g$ of a Stieltjes function $f$ with range $(0,\infty)$ 
which is GA-convex is such that $g$ is
completely monotone?}\\
(iii) {\it Does $\phi_2$ satisfy these conditions?}
\end{question}

An affirmative answer to part (iii) would answer the first part of Question~\ref{que:q1}.
There are many inequalities satisfied by derivatives of Stieltjes functions,
some of which are given in inequalities~(\ref{eq:eqxpt1}) to~(\ref{eq:eqxpt3}).
Others are given in~\cite{Wi41}, etc.

\medskip
Perhaps an easier question is:

\begin{question}\label{qes:q3}
(i) {\it Let $f$ be a  completely monotone function mapping $(0,\infty)$ onto itself. 
Is  the inverse of $f$ also completely monotone?}\\
(ii) {\it If, as we expect, not, give an example.}
\end{question}

It is relatively easy to construct an example of a positive, decreasing, log-convex functions whose inverse is not log-convex.
If considerations of second derivatives do not suffice to answer Question~\ref{qes:q3}, or if higher derivatives make it easier to answer,
the reference~\cite{Jo02} has the relevant formulae.

\subsection{Completely monotone functions}

A function $f:(0,\infty)\rightarrow(0,\infty)$  is called {\it completely monotone} if f has derivatives of all orders and satisfies
$(-1)^n f^{(n)}(x)\ge{0}$ for all $x> 0$ and all npnnegative integers $n$. 
In particular, completely monotonic functions are decreasing and convex. 

\medskip


\noindent{\bf Lemma 3.4 of~\cite{Me12}\ }{\it
 (i) If $g'\in\CM$ then the function $x\mapsto\exp(-g(x)$ is $\CM$.\\
(ii) If $\log f$ is $\CM$, then $f$ is $\CM$ (the converse is not true).\\ 
(iii) If $f\in\CM$ and $g$ is a positive function with a $\CM$ derivative
(i.e. a Bernstein function), then the composition $x\mapsto~f(g(x))$ is $\CM$.}

\medskip
A particular case of Lemma 3.4(iii) applies to our functions $\phi_1$ and $\phi_2$.
As $\sqrt{x}$ is Bernstein, as $\phi_1\in\CM$ so also $\phi_1(\sqrt{x})=\phi_2(x)\in\CM$.
More generally,
since, for $t>0$, $\exp(-t\sqrt(z))$ is $\CM$, it follows from the Laplace transform representation
of $f_1(z)\in\CM$ that $f_1(\sqrt{z})$ is also in $\CM$.

Starting with a function $f_2(z)$ and obtaining properties of $f_2(z^2)$, 
i.e. $f_2\circ{\rm square}$, seems more difficult.
Of course, $\phi_1(z)=\phi_2(z^2)$ is $\CM$.
Starting with $\phi_1\in\CM$ then forming $\phi_1(z^2)$, we note that the inverse Laplace transform of the latter can be found,
and it changes sign.
$\phi_1(z^2)$ is not in $\CM$, but seems to be log-convex.
Starting from the Laplace representation of $f_2$ seems to be unhelpful in general. 
We remark that
$\exp(-t z^2)$ is log-concave in $z$ and sums of log-concave functions are not necessarily log-concave.
However, consider starting from the Stieltjes representation.
The function $1/(t+z^2)$ is $\CM$ in $z$.
Hence if $f$ is a Stieltjes function, $f(z^2)$ is $\CM$ in $z$,
as is the case with our functions $\phi_2$ and $\phi_1(z)=\phi_2(z^2)$.

\subsection{Stieltjes functions}\label{subsec:subsecStieltjes}

\subsubsection{Definition and basic properties}

Stieltjes functions ${\cal S}$ are a subclass of $\CM$.
A non-negative function $f$  is called a Stieltjes function ($f\in{\cal S}$ for short) if there exists 
$a_0\ge{0}$, $a_1\ge{0}$  and a non-negative measure $\mu(dt)$ on$[0,\infty)$ integrating 
$(z + t)^{-1}$ such that
\begin{equation}
 f(z) = a_0 + \frac{a_1}{z} + \int_0^\infty \frac{1}{z+t} \, \mu(dt) .	
 \label{eq:StDef}
 \end{equation}
While ${\cal S}$ is not closed under multiplication it is `logarithmically convex' in the sense that
for all $g_0,\ g_1\in{\cal S}$ and $\alpha\in(0,1)$ we have $g_0^{1-\alpha} g_1^\alpha \in{\cal S}$.
(See~\cite{SSV} Proposition~7.10.)
Various other cones of functions are treated in~\cite{SSV}.
The (nonzero) complete Bernstein functions ($\CBF$) are the reciprocals of
(nonzero) Stieltjes functions.

Some of the convexity properties of the kernel $1/(z+t)$ are given here:
$$
\begin{array}{ccccc}
{\rm AH-both} & \Longrightarrow & {\rm AG-convex} & \Longrightarrow & {\rm AA-convex}\\
\Downarrow & &  & &\\
{\rm GH-concave} & \Longleftarrow & {\rm GG-concave} &  & {\rm GA-neither}\\
\Downarrow& & \Downarrow & & \\
{\rm HH-concave} & \Longleftarrow & {\rm HG-concave} & \Longleftarrow & {\rm HA-concave}\\
& & & & \\
& &\frac{1}{1+x}& &\\ 
& & & & 
\end{array}
$$
The function $f(x)=1/(1+x)$ has as inverse the function $g$ defined on $(0,1]$ by
$g(y)=(1/y)-1$.
The function $g$ is log-convex when $y<1/2$ and log-/concave for $1/2<y\le{1}$.
The function $f(x)$ is GA-convex for $x>1$ and GA-concave for $0<x<1$.
One observation that follows from this is if the input $\mu(t)$ to the representation~(\ref{eq:StDef})
is larger for small $t$ than at larger  $t$ the Stieltjes function so formed is more
likely to be GA-convex.

\medskip

There are other characterizations of Stieltjes functions.\\
$\bullet$
Besides that of equation~(\ref{eq:StDef}) there is that of iterated Laplace transforms
(for which, see~\cite{SSV} Theorem 2.2 p12).
For  function $f$ to be a Stieltjes function its inverse Laplace transform should be
completely monotone.
A simple consequence of this is that if $f(z)$ is Stieltjes, then 
the completely monotone function $f(z)/z$ cannot be Stieltjes.
More generally if $f(z)$ is completely monotone, 
the completely monotone function $f(z)/z$ cannot be Stieltjes as
the inverse Laplace transform of the latter is the integral fro 0 of the inverse Laplace transform of the former,
so increasing and hence not completely monotone.\\
$\bullet$ 
There are also characterizations in terms of Nevalinna-Pick functions
(for which, see~\cite{SSV} p56).

\subsubsection{Examples, focusing on functions related to that in the application in \S\ref{sec:transcendental}}

Define, for $0{\le}a<b\le{\infty}$,
$$ St(a,b,u,z) = \int_a^b\frac{u}{z+t}\, dt . $$
Various examples of Stieltjes functions follow:
{\small
\begin{center}
\begin{tabular}{||c|c|c|c|c||}
\hline
 & St& $f(z)$& in terms of $\phi_2$&invlaplace($f$)\\
\hline
1& $St(0,1,\frac{1}{2\sqrt{t}},z)$& $ \frac{1}{\sqrt{z}} \, \arctan\left( \frac{1}{\sqrt{z}} \right)$& 
 $\phi_2(z)$ &$\frac{\sqrt{\pi}\, {\rm erf}(\sqrt{s})}{2\sqrt{s}}$  \\
  & & &  & ${_1}F_1(\frac{1}{2};\frac{3}{2};-s)$ \\
 \hline
2& $St(0,b,\frac{1}{2\sqrt{t}},z)$& $ \frac{1}{\sqrt{z}} \, \arctan\left( \frac{\sqrt{b}}{\sqrt{z}} \right) $&
   $\phi_2\left(\frac{z}{b}\right)/\sqrt{b}$ & \\
& & &  &  \\ 
\hline
3& $St(0,\infty,\frac{1}{2\sqrt{t}}, z)$ &$\frac{\pi}{2\sqrt{z} }$&
$\phi_2(z)+\frac{1}{z}\phi_2(\frac{1}{z})$  &$\frac{1}{2}\sqrt{\frac{\pi}{s}}$ \\
& & & &  \\
\hline
4& & $\frac{\arctan(\sqrt{z})}{z}$&
$\frac{1}{z\sqrt{z}}\, \phi_2(\frac{1}{z})$  & \\
& & &  &$\frac{\pi}{2} -2\sqrt{\frac{s}{\pi}}\, {_2}F_2(\frac{1}{2},1;\frac{3}{2}, \frac{3}{2}; -s)$ \\
\hline
5& $St(1,\infty,\frac{1}{2\sqrt{t}}, z)$ & $ \frac{\pi-2\arctan(1/\sqrt{z})}{2\sqrt{z} }$ &
$\frac{\pi}{2\sqrt{z}} -\phi_2(z)$  &$\frac{\sqrt{\pi}\, {\rm erfc}(\sqrt{s})}{2\sqrt{s}}$ \\
& &  $\frac{\arctan(\sqrt{z})}{\sqrt{z}}$&
 $\frac{1}{z}\,\phi_2(\frac{1}{z}) $  &$\frac{1}{2}\sqrt{\frac{\pi}{s}}-{_1}F_1(\frac{1}{2};\frac{3}{2};-s)$ \\
& & &  &  \\
\hline\hline
6& $St(0,1,\frac{\sqrt{t}}{2},z)$ &$1-\sqrt{z}\arctan(\frac{1}{\sqrt{z}})$& 
$1 - z \phi_2(z)$ & \\
& & &  &  \\
\hline
7& $St(0,\infty,\frac{\sqrt{t}}{1+t},z)$&$\frac{\pi}{1+\sqrt{z}}$&
 & $\sqrt{\frac{\pi}{s}} -\pi\exp(s){\rm erfc}(\sqrt{s})$  \\
& & &  &$\sqrt{\frac{\pi}{s}} -\pi\exp(s)\left(1-2\sqrt{\frac{s}{\pi}},{_1}F_1(\frac{1}{2};\frac{3}{2}; -s )\right)$  \\
\hline
8& $St(0,1,\frac{\sqrt{t}}{1+t},z)$&$\frac{2\sqrt{z}\arctan(1/\sqrt{z})-\frac{\pi}{2}}{-1+z}$&
$\frac{2 z\phi_2(z)-\frac{\pi}{2}}{-1+z}$  &  \\
& & &  &  \\
\hline
9& & $\arctan(\frac{1}{\sqrt{z}})$&
$\sqrt{z}\, \phi_2(z)$  &$\frac{\exp(-s)\,{\rm erfi}(\sqrt{s})}{2s}$ \\
& &  & & $\frac{\exp(-s)}{\sqrt{s\pi}} \,{_1}F_1(\frac{1}{2};\frac{3}{2}; s )$ \\
\hline
10&$St(0,\infty,\delta(t-1),z)$&$\frac{1}{1+z}$&
 &  $\exp(-s)$ \\
& &  & &  \\
\hline
\end{tabular}
\end{center}
} 

\medskip

Some comments on the table are appropriate.
The first few entries all have range $(0,\infty)$.
The later entries have finite ranges (and some, e.g. item 10, are not GA-convex).
\begin{itemize}
\item
Entry 5 for $St(1,\infty,\frac{1}{2\sqrt{t}}, z)$ checks against
$$ \arctan(\frac{1}{\sqrt{z}}) + \arctan({\sqrt{z}})= \frac{\pi}{2}. $$
Also, as mentioned before, for any $f\in{\cal S}$, we have that
$x\mapsto{f(1/x)/x}$ is also in $\cal S$, checking against:
$$  \frac{\arctan(\sqrt{z})}{\sqrt{z}} =\frac{1}{z}\,\phi_2(\frac{1}{z}) . $$

\item
Entry 9's function $\arctan(1/\sqrt{z})$ can be seen to be a Stieltjes function as it is the
Laplace transform of a completely monotonic function:
$$\arctan(\frac{1}{\sqrt{z}}) 
= \int_0^\infty \exp(-z t)\, \frac{\exp(-t)\, {\rm erfi}(\sqrt{t})}{2 t} \, dt .$$
The complete monotonicity is proved via the following steps.
Beginning from the definition of erfi and 
using $v=(t-s^2)/t$ as a change of variable,  we find
\begin{align*}
\frac{\exp(-t)\, {\rm erfi}(\sqrt{t})}{2 t} 
&=\frac{1}{\sqrt{\pi}}\int_0^{\sqrt{t}} \frac{\exp(s^2-t)}{t}\, ds \\
&= \frac{1}{2\sqrt{\pi}}\int_0^1 \frac{\exp(-vt)}{\sqrt{t}\,\sqrt{1-v}} \, dv
\end{align*}
However, for $v>0$, $\exp(-v t)/\sqrt{t}$ is the product of $\CM$ functions, so $\CM$,
and sums and integrals of $\CM$ functions are $\CM$.
Hence $\arctan(1/\sqrt{z})$ is Stieltjes.

\item
Entry 4 follows from entry 9 on using Property (ii) from~\S\ref{subsubsec:VPropStieltjes}.

\item
In entry 10, $\delta$ is the Dirac delta measure.\\
Entry 10 is well known to be Stieltjes and from it one notes that $-z\phi_2'(z)$ is Stieltjes, as
$$ -z\phi_2'(z) = \frac{1}{2}\left( \phi_2 + \frac{1}{1+z} \right) . $$

\end{itemize}

\subsubsection{Various properties of Stieltjes functions}\label{subsubsec:VPropStieltjes}

Here is a short list of some properties of the cone of Stieltjes functions:
\begin{enumerate}
\item[(i)] $ f {\in} {\cal S}\setminus \{0\} \Longrightarrow \frac{1}{f(1/x)} {\in} {\cal S}$,
i.e. ${\rm recip}\circ{f}\circ{\rm recip}{\in} {\cal S}$
\item[(ii)] $f{\in}{\cal S}\setminus \{0\}  \Longrightarrow \frac{1}{xf (x)} {\in} {\cal S} $
\item[ ] From these,  $ f {\in} {\cal S}\setminus \{0\} \Longrightarrow {f(1/x)}/x {\in} {\cal S}$
\item[(iii)] $f{\in}{\cal S}, \lambda>0 \Longrightarrow \frac{f}{ \lambda f +1} {\in}{\cal S} $
\item[(iv)] $f,g{\in}{\cal S}\setminus \{0\}  \Longrightarrow f\circ\frac{1}{g} ,\ \frac{1}{f\circ{g}} {\in}{\cal S} $
\item[(v)]  $f,g{\in}{\cal S}, \ 0<\alpha<1  \Longrightarrow f^\alpha\circ g^{1-\alpha} {\in}{\cal S}$
\item[(vi)] $ f \in {\cal S}, \ 0<\alpha<1  \Longrightarrow f^\alpha  \in{\cal S}$.
\end{enumerate}

\subsubsection{Stieltjes functions, AG and GA}

Any completely monotone function is log-convex, i.e. AG-convex.

We have yet to check when (if always) a Stieltjes function with range $(0,\infty)$ is GA-convex.
The Stiieltjes function $\phi_2$ is GA-convex.
Any Stieltjes function which is the inverse of a log-convex function,
e.g. a completely monotone function, is GA-convex.

\subsubsection{Stieltjes functions, AH and HA}

\begin{theorem}\label{thm:StieltjesRecip}
For any Stieltjes function $\phi$, $(1/\phi)\in\CBF$ so, in particular, $1/\phi$ is concave, or,
In other words, $\phi$ is AH-convex.

\end{theorem}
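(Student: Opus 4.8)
The plan is to read off the first assertion directly from the characterisation recalled just before \S\ref{subsubsec:VPropStieltjes}, namely that the nonzero complete Bernstein functions are exactly the reciprocals of the nonzero Stieltjes functions. Since any $\phi\in{\cal S}\setminus\{0\}$ is positive (indeed, positive and decreasing, as noted for every Stieltjes function), this gives $1/\phi\in\CBF$ with no work. Everything then hinges on one classical fact: \emph{every complete Bernstein function is concave}. Granting this, $1/\phi$ is concave; and reading the row $(p,q)=(1,-1)$ of the table in \S\ref{sec:secSecond}, together with the fact that $1/\phi$ is positive and increasing (because $\phi$ is positive and decreasing), the statement ``$1/\phi$ concave'' is precisely the statement that $\phi$ is AH-convex. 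So the theorem reduces to the concavity of $\CBF$ functions.

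To establish that, I would use the cleanest available route. A complete Bernstein function is in particular a Bernstein function, and a Bernstein function is, by the definition recalled in Lemma 3.4(iii) of~\cite{Me12}, a positive function whose first derivative is completely monotone. A completely monotone derivative is in particular nonincreasing, so the second derivative is $\le 0$; hence every Bernstein function, a fortiori every complete Bernstein function, is concave. Applying this with $h=1/\phi$ finishes the argument. Alternatively, and self-containedly, one may invoke the integral representation (see~\cite{SSV})
$$ h(z)=a+bz+\int_{(0,\infty)} \frac{z}{z+t}\,\sigma(dt), \qquad a,b\ge 0, $$
and note that $a$ and $bz$ are affine while each kernel satisfies
$$ \frac{d}{dz}\,\frac{z}{z+t}=\frac{t}{(z+t)^2}>0, \qquad \frac{d^2}{dz^2}\,\frac{z}{z+t}=\frac{-2t}{(z+t)^3}<0, $$
so each is increasing and concave; concavity is preserved under nonnegative combinations and under the integral, giving $h$ concave.

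For readers who prefer to argue from the Stieltjes side, I would sketch a route working directly from the representation~(\ref{eq:StDef}) of $\phi$ itself, where $\phi$ is a nonnegative combination of the constant $a_0$, the term $a_1/z$, and the kernels $1/(z+t)$. Each building block is AH-convex, since its reciprocal is, respectively, the constant $1/a_0$, the linear function $z/a_1$, and the affine function $z+t$, all concave. It then suffices that AH-convexity be closed under addition, i.e. that if $1/f_0$ and $1/f_1$ are concave then so is $1/(f_0+f_1)$. Writing $u_i=1/f_i$, this is the assertion that the parallel sum $u_0u_1/(u_0+u_1)$ of two positive concave functions is concave, which follows from the joint concavity and coordinatewise monotonicity of the harmonic mean on the positive orthant.

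The main obstacle is the passage from finitely many kernels to the full integral in~(\ref{eq:StDef}): concavity of finite nonnegative combinations is routine, but to conclude concavity of $1/\phi$ by the second route one must justify a limiting argument extending the closure of AH-convexity under addition to the integral. This is harmless if one appeals to the fact that pointwise limits of concave functions are concave, the only delicate point being integrability of $\mu(dt)$ near $t=0$ and at infinity, which one needs in any case to differentiate under the integral sign when applying the derivative criterion of Definition~\ref{def:pq}. For these reasons I would present the $\CBF$ argument of the first two paragraphs as the main proof and relegate the direct Stieltjes computation to a remark.
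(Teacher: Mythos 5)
Your main argument coincides with the paper's own proof: both pass to $1/\phi\in\CBF$ via the reciprocal characterisation of complete Bernstein functions (Theorem~7.3 of~\cite{SSV}), then note that a Bernstein function has a completely monotone, hence decreasing, first derivative, so $(1/\phi)''\le 0$, giving concavity of $1/\phi$ and thus AH-convexity of $\phi$. The alternative routes you sketch (the $\CBF$ integral representation, and the direct Stieltjes-side argument via parallel sums) are correct in spirit but are extras; the core proof is essentially the paper's.
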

\begin{proof}
This follows as Theorem~7.3 of~\cite{SSV} ensures that $1/\phi$ is a complete Bernstein function.
Also any Bernstein function $b$ is positive with $b'\in\CM$, so we have $(1/\phi)'\in\CM$ is positive and decreasing.
That it is decreasing is $(1/\phi)''<0$, i.e. $1/\phi$ is concave.
In other words, $\phi$ is AH-convex.
\end{proof}

\medskip

\begin{theorem}\label{thm:Stieltjes1ox}
For any Stieltjes function $\phi$, the function $x\mapsto\phi(1/x)$ is concave, or,
in other words, $\phi$ is HA-concave.
\end{theorem}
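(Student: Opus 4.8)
The plan is to reduce the assertion to a statement about the sign of a derivative of $\phi$ and then to read it off the Stieltjes representation~(\ref{eq:StDef}) term by term. By Definition~\ref{def:pq} and the table, saying that $\phi$ is HA-concave, i.e. $(p,q)=(-1,1)$-concave, is exactly saying that $x\mapsto x^{2}\phi'(x)$ is decreasing; and, writing $h(x)=\phi(1/x)$, a short computation shows that $h''(x)$ has the same sign as $\frac{\dd}{\dd u}\bigl(u^{2}\phi'(u)\bigr)$ evaluated at $u=1/x$, so that $h$ is concave precisely when $x^{2}\phi'(x)$ is non-increasing. Thus it suffices to prove $\frac{\dd}{\dd x}\bigl(x^{2}\phi'(x)\bigr)\le 0$ for every Stieltjes $\phi$.

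First I would treat the building block $k_{t}(x)=1/(x+t)$. The $n=2$ case of equation~(\ref{eq:eqxpt3}) gives $\frac{\dd}{\dd x}\bigl(x^{2}k_{t}'(x)\bigr)=-2xt/(x+t)^{3}\le 0$ for $x>0$ and $t\ge 0$, so each kernel is HA-concave. The remaining two terms in~(\ref{eq:StDef}) are immediate: for the constant $a_{0}$ one has $x^{2}\cdot 0=0$, and for $a_{1}/x$ one has $x^{2}\cdot(-a_{1}/x^{2})=-a_{1}$, each of which is constant, hence weakly decreasing.

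It then remains to pass from the individual terms to $\phi$ itself. The crucial structural point is that the map $f\mapsto x^{2}f'(x)$ is linear, so the property ``$x^{2}f'(x)$ is non-increasing'' is preserved under non-negative linear combinations; this is exactly the closure-under-addition remark for $(p,1)$-concave functions noted after Definition~\ref{def:pq}. Differentiating~(\ref{eq:StDef}) twice under the integral sign and applying the kernel estimate to the integrand yields $\frac{\dd}{\dd x}\bigl(x^{2}\phi'(x)\bigr)=-\int_{0}^{\infty}\frac{2xt}{(x+t)^{3}}\,\mu(\dd t)\le 0$, which is the claim.

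The step I expect to require the most care is not an inequality but the justification of differentiating under the integral sign and of upgrading ``closed under addition'' from finite sums to the integral against $\mu(\dd t)$. Since a Stieltjes function is analytic on $(0,\infty)$, all derivatives exist and, on compact subsets of $(0,\infty)$, the integrands and their $x$-derivatives are dominated uniformly in $t$, so the interchange is legitimate by standard dominated-convergence arguments; with that in hand the proof is complete. Alternatively, one can avoid derivatives altogether: by the elementary fact that $h$ is concave iff $x\,h(1/x)$ is concave, the concavity of $\phi(1/x)$ is equivalent to that of $x\phi(x)$, and the latter follows directly from~(\ref{eq:StDef}) since $x/(x+t)=1-t/(x+t)$ has second derivative $-2t/(x+t)^{3}\le 0$.
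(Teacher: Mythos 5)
Your proof is correct, but it follows a genuinely different route from the paper's. The paper disposes of the theorem in two lines by combining structural facts about the cone ${\cal S}$: by property (i) of \S\ref{subsubsec:VPropStieltjes}, the map $f\mapsto 1/f(1/x)$ preserves ${\cal S}$, so $\psi(x):=1/\phi(1/x)$ is again Stieltjes; then Theorem~\ref{thm:StieltjesRecip} (the reciprocal of a Stieltjes function is a complete Bernstein function, hence concave, via Theorem~7.3 of~\cite{SSV}) applied to $\psi$ gives that $1/\psi$, which is exactly $x\mapsto\phi(1/x)$, is concave. You instead argue directly from the representation~(\ref{eq:StDef}): you reduce HA-concavity to the statement that $x^{2}\phi'(x)$ is non-increasing, verify this for the kernel $1/(x+t)$ (precisely the $n=2$ case of~(\ref{eq:eqxpt3}), which the paper itself flags as the $(p,q)=(-1,1)$ concavity statement for Stieltjes functions), handle the $a_{0}$ and $a_{1}/x$ terms separately, and integrate against $\mu(\dd t)$ with a standard domination argument. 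Both are valid; the paper's argument buys brevity and leverages the CBF machinery it reuses elsewhere, while yours is self-contained and elementary, requiring only the integral representation and calculus. Your closing variant -- reducing concavity of $\phi(1/x)$ to concavity of $x\phi(x)=a_{0}x+a_{1}+\int_{0}^{\infty}\bigl(1-t/(x+t)\bigr)\,\mu(\dd t)$ via the involution fact quoted in \S\ref{sec:Intro} -- is arguably the cleanest version of all, since it sidesteps the $(p,q)$-bookkeeping and needs only one differentiation under the integral. It is also worth noting that your termwise technique (prove the inequality for the kernel, then use closure under nonnegative combinations) is exactly how the paper proves the first-derivative analogue, Corollary~\ref{cor:corfac3}, so your approach is very much in the spirit of \S\ref{sec:secFirst} even though it differs from the proof actually given for this theorem.
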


\begin{proof}
The result of Theorem~\ref{thm:Stieltjes1ox} follows from item (i) in the above list and Theorem~\ref{thm:StieltjesRecip}.
See also~\cite{Me04}.
\end{proof}
\medskip

In connection with our later application, we remark that Theorem~\ref{thm:Stieltjes1ox} gives that
$\phi_2(1/x)$ is concave, whereas $\phi_1$, which is not Stieltjes, is such that $\phi_1(1/x)$ is convex.

\medskip
Stieltjes functions are simultaneously AH-convex and HA-concave.
This, with the AH-HA case of Theorem~\ref{thm:thmpqqp}, gives another proof of Corollary~\ref{cor:corfac3}:
the only Stieltjes functions whose inverses are Stieltjes
are positive multiples of $1/x$.

\subsubsection{Stieljes functions, GG, HG and HH}

The next result is weaker than the HA-concavity of Theorem~\ref{thm:Stieltjes1ox}
which implies HG-concavity which, in turn, implies HH-concavity:

\begin{theorem}\label{thm:StieltjesHH}
(i) Any Stieltjes function is HG-concave.\\
(ii) Any Stieltjes function is HH-concave.
\end{theorem}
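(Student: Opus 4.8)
The plan is to obtain both parts as corollaries of the HA-concavity already secured in Theorem~\ref{thm:Stieltjes1ox}, by descending the left-hand (concavity) inclusion lattice: HA-concave $\Longrightarrow$ HG-concave $\Longrightarrow$ HH-concave. This is precisely the route signalled in the sentence preceding the statement, so in principle one may simply cite the inclusions recorded in the concavity diagram and attributed to~\cite{AVV07}. For completeness, however, I would spell out the two implications explicitly, since each is a short monotonicity argument that uses nothing beyond the sign structure of Stieltjes functions.

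Concretely, recall from Definition~\ref{def:pq} that for a $C^2$ function the cases $(p,q)=(-1,1)$, $(-1,0)$, $(-1,-1)$ assert, respectively, that the quantities $x^2 f'(x)$, $\,x^2 f'(x)/f(x)$, and $x^2 f'(x)/(f(x))^2$ are decreasing; these are exactly HA-, HG-, and HH-concavity. Every Stieltjes function is positive and decreasing, so $f>0$ and $f'\le 0$, whence $1/f$ is positive and increasing. The first step is then to write $x^2 f'(x)/f(x) = \bigl(x^2 f'(x)\bigr)\cdot\bigl(1/f(x)\bigr)$ as the product of the nonpositive, decreasing factor $x^2 f'(x)$ (nonpositivity from $f'\le 0$, monotonicity from Theorem~\ref{thm:Stieltjes1ox}) with the positive, increasing factor $1/f$. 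A product of a nonpositive decreasing function and a positive increasing function is decreasing, which yields HG-concavity, part (i). Multiplying once more by the same positive increasing factor $1/f$ converts $x^2 f'(x)/f(x)$ into $x^2 f'(x)/(f(x))^2$ by the identical argument, giving HH-concavity, part (ii).

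I expect no genuine obstacle here: the entire content of the theorem is already packed into Theorem~\ref{thm:Stieltjes1ox}, and what remains is merely to slide down the concavity lattice. The only point requiring care is that the product-monotonicity step really does rely on $f$ being positive and decreasing --- both of which hold for Stieltjes functions, which are a subclass of the completely monotone functions --- and on $x^2 f'(x)$ being nonpositive, so that multiplication by the increasing factor $1/f$ preserves (rather than reverses) the decrease. The degenerate case of a constant Stieltjes function is trivial and may be set aside. Thus Theorem~\ref{thm:StieltjesHH} is genuinely the weaker consequence advertised, obtained with essentially no new input.
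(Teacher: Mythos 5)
Your proof is correct, but it is not the proof the paper gives. The paper's own argument is a pair of one-liners through the Stieltjes cone itself: for (ii), $f\in{\cal S}$ iff $1/f(1/x)\in{\cal S}$ (property (i) of \S\ref{subsubsec:VPropStieltjes}), and since Stieltjes functions are convex, $1/f(1/x)$ is convex, which is precisely HH-concavity of $f$; for (i) the same device is combined with log-convexity, so $-\log f(1/x)$ is convex, which is HG-concavity. You instead take the route that the sentence preceding the theorem signals but never executes: start from the HA-concavity of Theorem~\ref{thm:Stieltjes1ox} and descend the concavity lattice, supplying your own proofs of the two implications by identifying HA-, HG- and HH-concavity with the decrease of $x^2 f'$, $x^2 f'/f$ and $x^2 f'/f^2$ respectively, and observing that multiplying a nonpositive decreasing function by the positive increasing factor $1/f$ preserves decrease. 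That product-monotonicity argument is sound (and you rightly flag where positivity, decreasingness and the sign of $x^2 f'$ enter, as well as the trivial constant case), and it isolates a more general fact than the theorem states: any positive, decreasing, HA-concave function is HG- and HH-concave, so nothing Stieltjes-specific is needed once Theorem~\ref{thm:Stieltjes1ox} is in hand. What the paper's route buys instead is independence from Theorem~\ref{thm:Stieltjes1ox}: each part follows in one line from the closure of ${\cal S}$ under $f\mapsto 1/f(1/x)$ together with the convexity and log-convexity of completely monotone functions, so the theorem stands on its own even if the lattice inclusions of~\cite{AVV07} are never invoked.
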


\begin{proof}
(ii) Perhaps item (ii) is the easier. There is a one-line proof:
$$f\in{\cal S} \Longleftrightarrow
\frac{1}{f(\frac{1}{x})}\in{\cal S} \Longrightarrow
\frac{1}{f(\frac{1}{x})}\ \ {\rm\ is\ convex}\ \  
 \Longleftrightarrow f \ \ \ {\rm\ is\ HH-concave} .$$
 See~\cite{AVV07} Theorem 2.4(9), with calculus proofs using 2.5(9).\\
 (i) The one-line can be adapted:
 $$f\in{\cal S} \Longleftrightarrow
\frac{1}{f(\frac{1}{x})}\in{\cal S} \Longrightarrow
-\log({f(\frac{1}{x})})\ \ {\rm\ is\ convex}\ \  
 \Longleftrightarrow f \ \ \ {\rm\ is\ HG-concave} .$$
 \end{proof}
 
 \medskip
 
  We remark also that
 while the Stieltjes function $\phi_2$ is necessarily HH-concave,
 the function $\phi_1$, which is merely in $\CM$, is HH-convex.
 
 \medskip
The Stiieltjes function $\phi_2$ is GG-concave.
The Stieltjes function $1/\log(1+x)$ (see~\cite{SSV} p228 entry 26)
is GG-convex.

\section{A transcendental equation}\label{sec:transcendental}

The transcendental equation
$$ X\tan(X)=Y , \qquad {\rm with\ \ } Y>0 $$
and an interest in solutions $X$ with $0<X<\pi/2$ arises in various applications.
The purpose of the remainder of this paper is to extract information on its solutions in a form that
can be used in our subsequent paper~\citeKWRob.
Before doing this, we note that there are other applications.
This transcendental equation has been widely studied, e.g.~\cite{BS73},~\cite{MRS03},~\cite{LWH15}.
 Numerical values, often used for checks, are given in Table 4.20 of~\cite{AS65}.
Amongst the applications, other than ours in~\citeKWRob,  are (i) the energy spectrum for the 
one-dimensional quantum mechanical finite square well,
and (though with $c<0$)
(ii) (though with $c<0$) zeros of the spherical Bessel function $y_1(x)=j_{-2}(x)$.

In the application, and notation,  in~\citeKWRob the problem is given $\beta>0$,
how does $\mu$ depend on $c$ where $\mu(c)$ solves the transcendental equation:
\begin{equation}
 \mu\tan(c\mu)= \frac{1}{\beta} .
 \label{eq:muc}
 \end{equation}
 It happens that one can re-scale variables so that there is just one independent variable ${\hat c}$:
  \begin{equation}
{\hat\mu} \tan({\hat c}{\hat\mu})=1,
 \qquad{\mbox{\rm where\ \ }} {\hat\mu}=\beta\mu,\ {\hat c}=\frac{c}{\beta} \  .
 \label{eq:rectmuchat}
 \end{equation}

 We have an interest in the smallest positive solutions,
\begin{displaymath}0<\mu({c})<\pi/(2 c),\qquad   0<{\hat \mu}<\pi/(2 {\hat c}) .
\end{displaymath}

In the application in~\citeKWRob much of the effort involves obtaining results valid for $\beta\ge{0}$ --
for Robin boundary conditions --
where the corresponding result with $\beta=0$  --
for Dirichlet boundary conditions -- has been available for decades.

We will, henceforth,  also drop the hat notation. 
In the next subsection we explore the behaviour of the function $\phi_1(\mu)$,\ defined as in~(\ref{eq:phiarctan}),
 which is inverse to $\mu(c)$, that is
 $$ \mu \tan(\mu \phi_1(\mu)) = 1 . $$
 Also explored are the convexity properties of $\phi_2$ where $\phi_2(z)=\phi_1(\sqrt{z})$.
 
 \subsection{The convexity properties of $\phi_2$ and $\phi_1$}\label{subsec:Propphi}
 
 Some properties follow from complete monotonicity, and, for $\phi_2$ others
 follow from it being a Stieltjes function.
 Yet further properties follow from calculation
 (the details of which are relegated to an appendix).
 
 \begin{theorem}
\label{lem:lem1}{\it Both $\phi_1$ and $\phi_2$ are completely monotone.
 Furthermore $\phi_2$ is a Stieltjes function.
}
\end{theorem}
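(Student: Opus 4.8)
The plan is to establish the strongest assertion first, namely that $\phi_2\in{\cal S}$, since membership in the Stieltjes class automatically gives complete monotonicity (${\cal S}\subset\CM$), and then to treat $\phi_1$ separately, keeping in mind the relations $\phi_2(z)=\phi_1(\sqrt z)$ and $\phi_1(z)=\phi_2(z^2)$. So the work splits into one Stieltjes integral identity and one Laplace-transform identity, with a sign/positivity question at the heart of the second.

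For the Stieltjes property of $\phi_2$ I would exhibit the representation~(\ref{eq:StDef}) explicitly. The substitution $t=s^2$ gives
\[
\int_0^1\frac{1}{z+t}\,\frac{dt}{2\sqrt t}=\int_0^1\frac{ds}{z+s^2}=\frac{1}{\sqrt z}\arctan\!\Big(\frac{1}{\sqrt z}\Big)=\phi_2(z),
\]
so $\phi_2$ has the form~(\ref{eq:StDef}) with $a_0=a_1=0$ and the nonnegative, integrable density $\mu(dt)=\tfrac{1}{2\sqrt t}\,\mathbf 1_{(0,1)}(t)\,dt$ (of total mass $1$). Hence $\phi_2\in{\cal S}$, and therefore $\phi_2\in\CM$.

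For $\phi_1$ the tempting route is to write $\phi_1(z)=\phi_2(z^2)$ and integrate the kernels $1/(z^2+t)$, and this is exactly where the main obstacle lies: $z\mapsto z^2$ is not a Bernstein function (its derivative $2z$ is increasing), so Lemma~3.4(iii) of~\cite{Me12} does not apply, and in fact each kernel $1/(z^2+t)$ fails to be completely monotone, its second derivative $(6z^2-2t)/(z^2+t)^3$ being negative for small $z$. I would therefore argue via the Laplace-transform characterization of $\CM$ directly. Since $\mathcal L[\sin x/x](z)=\arctan(1/z)$ and division by $z$ in the transform corresponds to integration in $x$, one obtains
\[
\phi_1(z)=\frac{1}{z}\arctan\!\Big(\frac{1}{z}\Big)=\int_0^\infty e^{-zx}\,\mathrm{Si}(x)\,dx,\qquad \mathrm{Si}(x):=\int_0^x\frac{\sin\tau}{\tau}\,d\tau.
\]
The crux is then the positivity $\mathrm{Si}(x)\ge 0$ for all $x\ge 0$: $\mathrm{Si}$ rises from $0$ on $[0,\pi]$ and thereafter oscillates about $\pi/2$ with successive local minima (at $x=2k\pi$) forming an increasing sequence whose smallest member $\mathrm{Si}(2\pi)$ is positive. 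Given a nonnegative density, $\phi_1\in\CM$ follows.

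As a consistency check, the complete monotonicity of $\phi_2$ can be recovered a second way: $\sqrt{\,\cdot\,}$ is Bernstein, so $\phi_1\in\CM$ yields $\phi_2=\phi_1\circ\sqrt{\,\cdot\,}\in\CM$ by Lemma~3.4(iii) of~\cite{Me12}, in agreement with the Stieltjes argument. I expect the genuinely substantive step to be the positivity of $\mathrm{Si}$; the two integral identities are routine once the correct representations are guessed.
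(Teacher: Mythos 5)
Your proposal is correct and follows essentially the same route as the paper: the paper likewise proves $\phi_1\in\CM$ from the Laplace representation $\phi_1(z)=\int_0^\infty e^{-zt}\,\mathrm{Si}(t)\,dt$ with $\mathrm{Si}>0$, and establishes $\phi_2\in{\cal S}$ using the same integral $\int_0^1 \frac{1}{z+t}\,\frac{dt}{2\sqrt{t}}$ (alongside an equivalent iterated-Laplace argument via an erf kernel, which your ordering renders unnecessary since ${\cal S}\subset\CM$). Your side remark that the kernels $1/(z^2+t)$ are not completely monotone in $z$ is also correct, and is a point worth retaining, since a later aside in the paper asserts the opposite.
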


\begin{proof}
We have
\begin{equation} \phi_1(z) = \int_0^\infty \exp(-z t) {\rm Si}(t)\, dt ,
\label{nu:phi1ILT}
\end{equation}
where {\rm Si} is the sine integral
\begin{equation} {\rm Si}(t) = \int_0^t \frac{\sin(\tau)}{\tau} \, d\tau .
\label{nu:SiDef}
\end{equation}
Since ${\rm Si}(t)>0$ for $t>{0}$, $\phi_1(z)$ is completely monotone.

The same Laplace transform representation also shows $\phi_2$ to be completely  monotone as
\begin{equation} \phi_2(z)
= \int_0^\infty \exp(-z t)\, \frac{1}{2} \sqrt{\frac{\pi}{t}} {\rm erf}(\sqrt{t})\, dt ,
\label{nu:phi2LT}
\end{equation}
and the integrand in the expression above is positive. Furthermore 
$$  \sqrt{\frac{\pi}{t}} {\rm erf}(\sqrt{t})
= \int_0^1 \frac{\exp(-s t)}{\sqrt{s}}\, ds ,
$$
which is completely monotone, so by~\cite{SSV}~Theorem 2.2(i), $\phi_2\in{\cal S}$.
That $\phi_2$ is a Stieltjes function also follows, 
as we have already noted in the table of examples in~\S\ref{subsec:subsecStieltjes}
$$ \phi_2(z)
=\frac{1}{\sqrt{z}} \, \arctan\left( \frac{1}{\sqrt{z}} \right)
=\int_0^1 \frac{1}{z+t}\, \frac{dt}{2\sqrt{t}} ,
$$
This completes the proof.
\end{proof}

\medskip

A calculation gives
$$
\frac{d}{dx} \left(x^{1-p} \phi_2'(x)(\phi_2(x))^{q-1}\right)
= \frac{x^{-1-p}\phi_2^{q-2}}{4 (1+x)^2} Q_2(\phi_2,x) $$
where 
\begin{align*}
Q_2(\Phi_2,x)&=& (1+x)^2 (2p+q) \Phi_2^2 +(2(p+q)(1+x)+x-1)\Phi_2 +(q-1) \\
 &=& (2p+q) \Phi_2^2 x^2 +(2 (2p+q)(\Phi_2+1)+1-2p)\Phi_2 x +(\Phi_2+1)((2p+q)\Phi_2 +q-1) .
\end{align*}
We establish the convexity/concavity properties by establishing that $Q_2(\phi_2(x),x)$ does not
change sign.
The details of the calculation are relegated to an Appendix.
The results are summarised in the following diagram.

$$
\begin{array}{ccccc}
{\rm AH-convex} & \Longrightarrow & {\rm AG-convex} & \Longrightarrow & {\rm AA-convex}\\
 & &  & & \Uparrow\\
{\rm GH-concave} & \Longleftarrow & {\rm GG-concave} &  & {\rm GA-convex}\\
\Downarrow& & \Downarrow & & \\
{\rm HH-concave} & \Longleftarrow & {\rm HG-concave} & \Longleftarrow & {\rm HA-concave}\\
& & & & \\
& &\phi_2& &\\ 
& & & & 
\end{array}
$$
From the diagram above one expects (correctly) that the hardest results to establish would be
GA-convexity,  GG-concavity and HA-concavity.

The corresponding results for $\phi_1$, also established in the appendix, are:
$$
\begin{array}{ccccc}
{\rm AH-concave} &    & {\rm AG-convex} & \Longrightarrow & {\rm AA-convex}\\
 \Downarrow& &  & & \Uparrow\\
{\rm GH-concave} & \Longleftarrow & {\rm GG-concave} &  & {\rm GA-convex}\\
\Downarrow& & \Downarrow & & \Uparrow \\
{\rm HH-concave} & \Longleftarrow & {\rm HG-concave} &  & {\rm HA-convex}\\
& & & & \\
& &\phi_1& &\\ 
& & & & 
\end{array}
$$

\subsection{Properties of $\mu$ used in~\citeKWRob}\label{subsec:Propmu}

Reversing the letters in the displays for $\phi_1$ and for $\phi_2$ gives the following.
\begin{itemize}
\item
The convexity properties of $\mu$ are represented thus:
$$
\begin{array}{ccccc}
{\rm HA-concave} &    & {\rm GA-convex} & \Longrightarrow & {\rm AA-convex}\\
 \Downarrow& &  & & \Uparrow\\
{\rm HG-concave} & \Longleftarrow & {\rm GG-concave} &  & {\rm AG-convex}\\
\Downarrow& & \Downarrow & & \Uparrow \\
{\rm HH-concave} & \Longleftarrow & {\rm GH-concave} &  & {\rm AH-convex}\\
& & & & \\
& &\mu& &\\ 
& & & & 
\end{array}
$$
We remark that the properties are the same as in the corresponding diagram for the Stieltjes function $\phi_2$.
Theorems~\ref{thm:StieltjesRecip} and~\ref{thm:Stieltjes1ox} ensure that any Stieljes function is
both AH-convex and HA-concave.
We have no information yet to preclude the possibility that $\mu$ is Stieltjes, but as we have no proof
that it is even completely monotone, it is too early to speculate.
\item
The convexity properties of $\mu_{(2)}$ are represented thus:
$$
\begin{array}{ccccc}
{\rm HA-convex} & \Longrightarrow & {\rm GA-convex} & \Longrightarrow & {\rm AA-convex}\\
 & &  & & \Uparrow\\
{\rm HG-concave} & \Longleftarrow & {\rm GG-concave} &  & {\rm AG-convex}\\
\Downarrow& & \Downarrow & & \\
{\rm HH-concave} & \Longleftarrow & {\rm GH-concave} & \Longleftarrow & {\rm AH-concave}\\
& & & & \\
& &\mu_{(2)}& &\\ 
& & & & 
\end{array}
$$
Clearly $\mu_{(2)}$ is not Stieltjes. If $\mu$ were to be shown to be completely monotone,
then so is its square, $\mu_{(2)}$.
\end{itemize}

As $\mu_{(2)}=\mu^2$ there are some obvious checks.
For example, it is clear that the AG, GG- and HG-convexity properties 
of  $\mu_{(2)}$ and of $\mu$ must be the same.
The convexity properties that differ are AH and HA.

\section{Other transcendental equations involving $\CM$ functions}\label{sec:Othertr}

Denote the Lambert W function by $W$.
Results concerning Stieltjes representations of $W$ are given in~\cite{KJCB12}.\\
The Stieltjes function $f(x)=W(1/x)$ is the solution of $g(f):=\exp(-f)/f=x$, and $g\in\CM$ as
$$ g(y)=\frac{\exp(-y)}{y}
=\int_1^\infty \exp(-t y)\, dt .$$
The Stieltjes function $f(x)=1/W(x)$ is the solution of $g(f):=\exp(1/f)/f=x$, and $g\in\CM$ as
$$ g(y)=\frac{\exp(1/y)}{y}
=\int_0^\infty {\rm BesselI}(0,2\sqrt{t})\exp(-t y)\, dt .$$

The function
$$ g(y) =\frac{\tanh(\sqrt{y})}{\sqrt{y}}\  \in {\cal S} ,$$
has arisen in connection with one of the author's applied mathematical interests -- water waves.
That $g\in{\cal S}$ is from Proposition 2.22 of~\cite{EW16}.
Amongst the ever-growing menangerie of special functions are 
`generalized Lambert W'  functions.
The solution of $g(y)=x$ is given in~\cite{MK16} as
\begin{equation}
\sqrt{y} = \frac{1}{2}
W\left(\begin{array}{c}2/x\\-2/x\end{array}; -1\right) .
 \label{eq:KCSols}
\end{equation}
To the best of the author's knowledge, there has been no systematic study of
complete monotonicity properties of generalized Lambert W functions.

Here is another example involving a generalized Lambert function,
this time with two upper parameters.
The equation to be solved, for $x$ is
\begin{equation}
\exp(x y)= (1+x/a_1) (1+x/a_2) \ \ {\rm with\  \ } a_1>0, \ a_2>0 .
\label{eq:W2up}
\end{equation}
When $a>0$, $\log(1+z/a)/z$ is a Stieltjes function for $z>0$.
Hence it is completely monotone, so convex, and log-convex.
A version of the generalized Lambert function - with 2 upper parameters - arises
in solving equation~(\ref{eq:W2up}).
One is interested in $x(y$). Take logs
$$y = \log((1+x/a_1)\,(1+x/a_2))/x .$$
Now the expression on the right is a Stieltjes function of $x$, and, from this
one can draw some conclusions concerning the convexity properties of $x(y)$.
(This may be related to a physical problem.
See~\cite{Bar16}, equations (4) and (62)-(63).)


\section{Conclusion}\label{sec:questions}

As a consequence of research associated with convexity properties of a domain functional
from a partial differential equation  problems (see~\citeKWRob)
various theorems associated with convex functions
and their inverses were discovered.
Theorem~\ref{thm:thmpqqp} is the simplest of these.

The application involved a function $\phi_2$ which was instantly noted to be Stieltjes from which
AH-convexity and HA-concavity (and consequences) follow immediately.
Several open questions are posed.  See~\S\ref{subsec:subsecHigher}.
That which seems the most important for further results
associated with the application of~\citeKWRob is whether the inverse of $\phi_2$ 
is completely monotone.

\section*{A: Calculations concerning $\phi_2$ and $\phi_1$}

A calculation gives
$$
\frac{d}{dx} \left(x^{1-p} \phi_2'(x)(\phi_2(x))^{q-1}\right)
= \frac{x^{-1-p}\phi_2^{q-2}}{4 (1+x)^2} Q_2(\phi_2,x) ,$$
where 
\begin{align*}
Q_2(\Phi_2,x)&= (1+x)^2 (2p+q) \Phi_2^2 +(2(p+q)(1+x)+x-1)\Phi_2 +(q-1) , \\
 &= (2p+q) \Phi_2^2 x^2 +(2 (2p+q)(\Phi_2+1)+1-2p)\Phi_2 x +(\Phi_2+1)((2p+q)\Phi_2 +q-1) .
\end{align*}
We establish the convexity/concavity properties by establishing that $Q_2(\phi_2(x),x)$ does not
change sign.
We do this by considering $\Phi_2$ as an independent variable in $Q_2(\Phi_2,x)$ 
and investigating this as $\Phi_2$ varies between lower and upper bounds of $\phi_2(x)$,
for which we use the interval $[3/(1+3x),1/x]$.

The upper bound is an obvious consequence of $\arctan(x)<x$. 
Establishing the lower bound can begin with
$$\frac{d}{dx}\left(\arctan(x) - \frac{x}{\frac{x^2}{3}+1}\right)
= \frac{4 x^4}{(1+x^2)(3+x^2)^2} \ge{0} .$$
On integrating the left-hand side from 0 we have
$$ \arctan(x) >  \frac{x}{\frac{x^2}{3}+1} .$$
From this
$$ \phi_2(x) >  \frac{1}{\frac{1}{3}+x} . $$
(The weaker inequality $\phi_2(x)>1/(1+x)$ suffices for all except for testing AH-convexity.)

The values of $Q_2$ at the end-points of the interval $[3/(1+3 x),1/x]$ are
\begin{align*}
Q_2(\frac{3}{1+3x},x) 
&=   \frac{4 Q_{2-}(p,q)}{(1+3x)^2}, &Q_{2-}(p,q) &= 9(p+q)x^2 +3(5p+4q-1)x+6p+4q-1 ,\\
Q_2(\frac{1}{x},x) 
&=   \frac{Q_{2+}(p,q)}{x^2},    &Q_{2+}(p,q) & = 4(p+q)x^2 + (6p+4q-1)x+(2p+q) .
\end{align*}
The function $\phi_2$ is $(p,q)$ convex (concave) iff $Q_2(\phi_2(x),x)>0$ (resp. $Q_2(\phi_2(x),x)<0$).
We are able to determine the sign of $Q_2(\Phi_2,x)$
over the interval $[3/(1+3x),1/x]$
(and often over a much larger interval of $\Phi_2$, which, however, is irrelevant to our needs).
For the sign to remain constant it is, of course, necessary that both
$Q_{2-}$ and $Q_{2+}$ have the same sign, and we record these in the table below.

\begin{center}
\begin{tabular}{|c|c|c|c|}
\hline
$(p,q)$& MN& $Q_2(\Phi_2,x)$&Notes \\
 & & &$[Q_{2-}, Q_{2+}]$\\
\hline
(1,1)&    AA-convex&  $\Phi_2( 3(1+x)^2\Phi_2 +5x+3)$& all terms in $Q_2$ positive \\
        &          &  &$[18x^2+24x+9,8x^2+9x+3]$ \\
        \hline
(1,0)&    AG-convex& $ 2(1+x)^2\Phi_2^2+(3x+1)\Phi_2-1$ & $Q_2$ positive for $\Phi_2>1/(1+x)$ \\
        &          &   & $[ 9x^2+12x+5 ,4x^2+5x+2  ]$ \\
        \hline
(1,-1)&   AH-convex& $ (1+x)^2\Phi_2^2+(x-1)\Phi_2-2$ &  sign change below interval \\
         &          &   &$[1  , 1+x ]$  \\
         \hline
(0,1)&    GA-convex&  $\Phi_2( (1+x)^2\Phi_2 +3x+1)$& all terms in $Q_2$ positive   \\
         &          & &   $[ 9x^2+9x+3 ,4x^2+3x+1  ]$\\
         \hline\hline
(0,0)&   GG-concave&  $x\Phi_2-\Phi_2-1$&  Use $\phi_2<1/x$ in first term \\
         &          &   &   $[ -3x-1 ,-x  ]$ \\
         \hline
(0,-1) &    GH-concave & $-(1+x)^2 \Phi_2^2-(x+3)\Phi_2-3$& all terms in $Q_2$ negative \\
          &          & &   $[ -9x^2-15x-5 ,-(x+1)(4x+1)  ]$\\
          \hline
(-1,1)&   HA-concave&  $-(1+x)^2 \Phi_2^2-(x-1)\Phi_2$&   Use $\phi_2>1/(1+x)$ in the first term and\\
& & & $\phi_2<1/x$ in the $x\Phi_2$ term\\
         &          &   &   $[ -6x-3,-3 x-1  ]$ \\
         \hline
(-1,0)&   HG-concave& $-2(1+x)^2 \Phi_2^2-(x+3)\Phi_2-1$ &  all terms in $Q_2$ negative \\
          &        &  &   $[ -9x^2-18x-7 ,-4x^2-7x-2  ]$ \\
          \hline
(-1,-1)&  HH-concave& $-3(1+x)^2 \Phi_2^2-(3x+5)\Phi_2-2$ &  all terms in $Q_2$ negative   \\
         &          &  &   $[-18x^2-30x-11  ,-(x+1)(8x+3)  ]$ \\
\hline
\end{tabular}
\end{center}

\bigskip

We now apply the same process to $\phi_1$.
One result that is used in our application~\citeKWRob is that $\phi_1$ is AH-concave and HA-convex,
both of which are different to $\phi_2$.
A calculation gives
$$
\frac{d}{dx} \left(x^{1-p} \phi_1'(x)(\phi_1(x))^{q-1}\right)
= \frac{x^{-1-p}\phi_1^{q-2}}{ (1+x^2)^2} Q_1(\phi_1,x) $$
where 
$$
Q_1(\Phi_1,x)
= (1+x^2)^2 (p+q) \Phi_1^2 +((p+2q)(1+x^2)+x^2-1)\Phi_1 +(q-1) .
$$
We establish the convexity/concavity properties by establishing that $Q_1(\phi_1(x),x)$ does not
change sign.
We do this by considering $\Phi_1$ as an independent variable in $Q_1(\Phi_1,x)$ 
and investigating this as $\Phi_1$ varies between lower and upper bounds of $\phi_1(x)$,
for which we use the interval $[3/(1+3x^2),1/x^2]$.
The values of $Q_1$ at the end-points are:
\begin{align*}
Q_1(\frac{3}{1+3x^2},x) 
&= \frac{2Q_{1-}(p,q)}{(1+3x^2)^2}, &Q_{1-}(p,q) &= 9(p+2q)x^4+3(5p+6q-2)x+6p+8q-2 ,\\
Q_1(\frac{1}{x^2},x) 
&=   \frac{Q_{1+}(p,q)}{x^4}, & Q_{1+}(p,q) &= (2p+4q)x^4 + (3p+4q-1)x+(p+q) .
\end{align*}
The function $\phi_1$ is $(p,q)$ convex (concave) iff $Q_1(\phi_1(x),x)>0$ (resp. $Q_1(\phi_1(x),x)<0$).
We are able to determine the sign of $Q_1(\Phi_1,x)$
over the interval $[3/(1+3x^2),1/x^2]$
(and often over a much larger interval of $\Phi_1$, which, however, is irrelevant to our needs).
For the sign to remain constant it is, of course, necessary that both
$Q_{1-}$ and $Q_{1+}$ have the same sign, and we record these in the table below.

\begin{center}
\begin{tabular}{|c|c|c|c|}
\hline
$(p,q)$& MN& $Q_1(\Phi_1,x)$&Notes \\
 & & &$[Q_{1-}, Q_{1+}]$\\
\hline
(1,1)&    AA-convex&  $\Phi_1( 2(1+x^2)^2\Phi_1 +4x^2+2)$& all terms in $Q_1$ positive \\
        &          &  &$[27x^4+33x^2+12,6x^4+6x^2+2]$ \\
        \hline
(1,0)&    AG-convex& $ (1+x^2)^2\Phi_1^2+2x^2 \Phi_1-1$ & $Q_1$ positive for $\Phi_1>1/(1+x^2)$ \\
        &          &   & $[ 9x^4+9x^2+4 ,2x^4+2x^2+1  ]$ \\
        \hline
(1,-1)&   AH-concave& $ -2\Phi_1-2$ &  all terms in $Q_1$ negative\\
         &          &   &$[-9x^4-15x^2-4  , -2x^2(x^2+1) ]$ \\
         \hline
(0,1)&    GA-convex&  $\Phi_1( (1+x^2)^2\Phi_1 +3x^2+1)$& all terms in $Q_1$ positive   \\
         &          & &   $[ 18x^4+18x^2+6 ,4x^4+3x^2+1  ]$\\
         \hline
(0,0)&   GG-concave&  $x^2\Phi_1-\Phi_1-1$&  Use $\phi_1<1/x^2$ in first term \\
         &          &   &   $[ -6x^2-2 ,-x^2  ]$ \\
         \hline
(0,-1) &    GH-concave & $-(1+x^2)^2 \Phi_1^2-(x^2+3)\Phi_1-2$& all terms in $Q_1$ negative \\
          &          & &   $[ -18x^4-30x^2-10 ,-(x^2+1)(4x^2+1)  ]$\\
          \hline
(-1,1)&   HA-convex&  $2x^2\Phi_1$&   all terms in $Q_1$ positive\\
         &          &   &   $[ 3x^2(3x^2+1),2x^4  ]$ \\
         \hline
(-1,0)&   HG-concave& $-(1+x^2)^2 \Phi_1^2-2\Phi_1-1$ &  all terms in $Q_1$ negative \\
          &        &  &   $[ -9x^4-21x^2-8 ,-2x^4-4x^2-1  ]$ \\
          \hline
(-1,-1)&  HH-concave& $-2(1+x^2)^2 \Phi_1^2-(2x^2+4)\Phi_1-2$ &  all terms in $Q_1$ negative   \\
         &          &  &   $[-27x^4-45x^2-16  ,-(x^2+1)(6x^2+2)  ]$ \\
\hline
\end{tabular}
\end{center}

The results are summarised in the diagrams in~\S\ref{subsec:Propphi}.
\newpage

\begin{center}
{\Large Supplement to Part I}
\end{center}

\section*{Direct calculation for `convexity' properties of $\mu$}

\noindent
{This section
is, now, probably just history, concerning early methods of getting the results.
However, there remains the possibilty that some neat formulae for the
$n$-th derivative of $\mu(c)$ might be found, allowing some inductive proof that
the derivatives alternate in sign.)}

\medskip

As before, we use the notation $\mu_{(2)}=\mu^2$ and ${\hat\mu}_{(2)}={\hat\mu}^2$.
This tidies the notation when differentiating with respect to such variables.

\medskip

A short calculation establishes that
with
$$ r(f):= \frac{f f''}{(f')^2} $$
we have
$$\frac{d^2}{d x^2} f(x)^k
= \frac{k f^{k-2}}{(f')^2}\left( r(f)+k-1\right) . $$
Hence
$$ r(f) > 1-k\quad\Longleftrightarrow k f^k{\rm\ is\ convex},\qquad
r(f) < 1-k\quad\Longleftrightarrow k f^k{\rm\ is\ concave} . $$

The ratio $r$ relates to power-concavity, and we can compare
power concavity of $f^k$ with that of $f$ in a natural way.
For any function of x, denoted $n$ here,
\begin{equation}
r(f):= \frac{f f''}{(f')^2} \qquad {\rm gives\ \ }
\frac{r(f^k)-n}{r(f)-(k n-k+1)}= \frac{1}{k},\quad{\rm and\ }
r(f^k)=\frac{1}{k}\left(r(f)  + k-1\right).
\label{eq:rDefProp}
\end{equation}
In particular
$$r(\mu_{(2)})=\frac{1}{2}\left( r(\mu)+1\right)\qquad{\rm and}\qquad
r(\frac{1}{\mu})= 2 - r(\mu) .
$$

\medskip

The functions $\mu(c)$, $\mu_{(2)}(c)$, (and ${\hat\mu}({\hat{c}})$, etc.)  are decreasing, convex, and indeed log-convex 
in $c$ (and in ${\hat{c}}$, respectively).

This is checked by routine differentiation.
At fixed $\beta>0$, $\mu$ decreases as $c$ increases:
 \begin{equation}
0> \frac{d  \mu}{d  c}
 = - \ \frac{\mu ( 1+ \beta^2\mu^2)}{\beta +c (1+\beta^2\mu^2)} >-\frac{\mu}{c},\qquad
 \frac{d  \mu_{(2)}}{d  c}
 = - \ \frac{2\mu_{(2)} ( 1+ \beta^2\mu_{(2)})}{\beta +c (1+\beta^2\mu_{(2)})} .
\label{eq:dmu2dc}
\end{equation}
The hat de's are obviously related to these, as, for example,  the $\mu_{(2)}$ de can be written
$$ \frac{d  {\hat\mu}_{(2)}}{d {\hat c}}
= - \ \frac{2{\hat\mu}_{(2)} ( 1+ {\hat\mu}_{(2)})}{1 +{\hat c} (1+{\hat\mu}_{(2)})} .
$$
Also $\mu(c)$ is convex in $c$:
$$\frac{d ^2 \mu}{d  c^2}
 =  \ \frac{2\mu ( 1+ \beta^2\mu^2)(2\beta^3 \mu^2 +c \beta^4 \mu^4 +2 c \beta^2 \mu^2 + \beta + c)}
 {(\beta +c (1+\beta^2\mu^2))^3} .
 $$
This gives the ingredients for calculating $r(\mu)$:
$$ r(\mu) = 2 - \frac{2\beta^3 \mu \mu'}{(1+\beta^2\mu^2)^2} ,$$
so that $r(\mu)>2$ and hence $1/\mu$ is concave.
(Also stated that $\mu$ is -1-convex, or AH-convex.)
A different organization of the calculation is given just after the 
calculation showing the weaker result of log-convexity.

\medskip

 A positive function $f$ is logconvex iff $f f''/(f')^2\ge{1}$, i.e. $r(f)>1$, and a short calculation shows that
 $\mu$, and, of course, also $\mu_{(2)}$ are log-convex.
Calculating we see that $\log(\mu(c))$ is convex in $c$:
 \begin{eqnarray*}
 \frac{d^2 \log(\mu(c))}{d c^2}
 &=& \frac{ \mu(c)\mu''(c)-\mu'(c)^2}{\mu(c)^2}\\
 &=&  \ \frac{(1+ \beta^2\mu^2)(3\beta^3 \mu^2 +c \beta^4 \mu^4 +2 c \beta^2 \mu^2 + \beta + c)}
 {(\beta +c (1+\beta^2\mu^2))^3} .
\end{eqnarray*}
On the basis of having calculated a few more higher derivatives  $\frac{d ^j \mu}{d  c^j}$  of $\mu(c)$ with respect to $c$
there are indications that $\mu(c)$ may be completely monotone.

Maple code related to convexity and log-convexity of $\mu$ and other forms of power convexity is given 
at the URL given in the abstract.



As stated at several places above, calculation for $\mu_{(2)}$ gives the stronger result, stronger than log-convexity:
 $$- 
\frac{2\mu_{(2)}^{5/2}}{(\mu_{(2)}')^2} \, \frac{d^2}{d c^2}\left(\frac{1}{\sqrt{ \mu_{(2)}}}\right)
 = \frac{\mu_{(2)} \mu_{(2)}''}{(\mu_{(2)}')^2} -\frac{3}{2}
= \frac{\beta^3 \mu_{(2)}}{( 1+ \beta^2\mu_{(2)})(\beta +c (1+\beta^2\mu_{(2)})}
=  - \frac{\beta^3 \mu_{(2)}'}{2( 1+ \beta^2\mu_{(2)})} ,
 $$
 a result which is re-cast in several ways, including equation~(\ref{eq:halfConc}).
 Equation~(\ref{eq:rDefProp}) gives
 $$r(\mu_{(2)})-\frac{3}{2} = \frac{1}{2}\left( r(\mu)-2 \right) .$$

A short remark on the application -- concerning fundamental eigenvalues for the Laplacian with homogeneous
Robin boundary conditions --  seems appropriate here, with full details soon in~\citeKWRob.
A nice property for Dirichlet eigenvalues is that, for any pair of convex domains,
$\lambda_1^{-1/2}$ is concave under Minkowski sums of convex domains.
For Robin eigenvalues we now have the result, for $N=1$, one dimension.
The result is that $1/\mu=1/\sqrt{\mu_{(2)}}$ is concave.
$$\frac{d (1/\mu)}{d c} = -\frac{1}{\mu^2}\, \frac{d\mu}{d c}  =
\frac{d (1/\sqrt{\mu_{(2)}})}{d c}:= -\frac{1}{2\mu_{(2)}^{3/2}}\, \frac{d\mu_{(2)}}{d c} ,
$$
so that
\begin{equation}
\frac{d^2 (1/\mu)}{d c^2}
= -\, \frac{2\beta^3 \mu (1+\beta^2\mu^2 )}{(\beta+c(1+\beta^2\mu^2 ))^3} 
=
\frac{d^2 (1/\sqrt{\mu_{(2)}})}{d c^2}
= -\, \frac{2\beta^3 \sqrt{\mu_{(2)}} (1+\beta^2\mu_{(2)} )}{(\beta+c(1+\beta^2\mu_{(2)} ))^3} .
\label{eq:halfConc}
\end{equation}
This is used in the proof of Theorem~\ref{thm:thmBorell} of~\citeKWRob. 

The preceding paragraphs concern the AH-convexity of $\mu$.
The direct calculation showing $r(\mu_{(2)})<2$, AH-concavity, is rather longer.
(We do not actually use this in Part II.)
Now, begining with the identiy noted above,
\begin{eqnarray*}
r(\mu_{(2)})
&=& \frac{1}{2}\left( r(\mu)+1\right) ,\\
&=& \frac{1}{2}\left(3 +\frac{2\beta^3 \mu_{(2)}}
{( 1+ \beta^2\mu_{(2)}) (\beta +c (1+\beta^2\mu_{(2)})} \right).
\end{eqnarray*}
We need inequalities on $\mu$.
(There are many inequalities.
An easy one, not used immediately, follows from
$\tan(x)<x$: we have $\mu>1/(c\beta)$.)
Since $\phi_1(x)\ge{1/(1+x^2)}$,  we have $1+\beta^2\mu^2\ge\beta/c$.
Using this to eliminate $c$ in the last displayed equation gives
$$ r(\mu_{(2)})
= \frac{1}{2}\left(3 +\frac{\beta^2 \mu^2}
{ 1+ \beta^2\mu^2}\right) < 2,
$$
the final inequality coming from consideration of $x^2/(1+x^2)$.

For the proof of Theorem~\ref{thm:thm3MIA}  of~\citeKWRob 
we  use $M(\ell)={\hat\mu}_{(2)}(\exp(\ell))$.
The function $M$ satisfies
$$ \sqrt{M(\ell)}\tan(\exp(\ell)\sqrt{M(\ell)}) = 1 .$$
Implicit differentiation gives
$$ \frac{d M}{d\ell}=
-\frac{2\exp(\ell) M (1+M)}{1 + \exp(\ell)(1+M)} .
$$
Differentiating again gives
\begin{equation}
\frac{d^2 M}{d\ell^2}
=  \left( -  \frac{d M}{d\ell}\right) \ 
\frac{-1+\exp(\ell) (1+3 M) +2\exp(2\ell) (1+ M)^2}{(1 + \exp(\ell)(1+M))^2} \  .
\label{eq:d2Mdell2}
\end{equation}

Maple code related to studying $M(\ell)$ is given 
at the URL given in the abstract.

\bigskip
\section*{More on higher derivatives.}
\label{pg:pghigher}

Direct calculation of the first few higher derivates of $\mu$
show that these have signs appropriate to $\mu$ being completely monotone.

If $\mu$ is completely monotong one mighte attempt to establish the signs of
the derivatives by some form of induction.

\subsection*{Direct calculation of higher derivatives of $\mu$}

Higher derivatives are calculated in maple code presented 
at the URL given in the abstract.
Where $c$ occurs in the code, it could be written in terms of $\mu$ using
$c=\beta\,\phi_1(\beta\mu)$.
Alternatively the higher derivatives can be given as functions of $\mu$ and $\mu'$
with $c$ eliminated.
In all cases expressions which are clearly one-signed arise.
However the polynomials in the numerators become unpleasantly long.

Results better than log-convexity of $\mu^{(n)}$ arise.
Hankel determinants are also studied in the code.

To date, no inductive method to establish complete monotonicity has been found.
\smallskip

\goodbreak

\subsection*{Higher derivatives of inverse functions}

The function $\phi_1$ is completely monotone,
$\phi_2$ is Stieltjes, and it is an interesting open question if their inverses,
$\mu$ and $\mu_{(2)}$ are completely monotone.

We can regard our work to date as about how second derivative information on $f$
allows us to determine second-derivative intormation on its inverse $g$.
Many questions arise concerning what can be said about higher derivatives.
In this paper we restrict ourselves to questions related to our application.
For example:\\
Give an example of a completely monotone function $f$ whose inverse $g$ is not completely monotone.\\
Under what conditions is the inverse of a Stieltjes function completely monotone?\\
Our suggestion for a method of approaching this is to use the following formula from~\cite{Jo02}:
 \begin{eqnarray}\frac{d^3 x}{d y^3}
 &=&\frac{1}{f'(x)} \frac{d}{dx}\left(\frac{d^2 x}{d y^2}\right)
 =\frac{1}{f'(x)} \frac{d}{dx}\left(-\frac{f''(x)}{f'(x)^3}\right)
 = \frac{3 f''(x)^2 - f'''(x) f'(x)}{f'(x)^5} ,\label{eq:thirdD}\\
&=&  \frac{1}{2f'(x)} \frac{d^2}{dx^2}\left(\frac{1}{f'(x)^2}\right) .
\nonumber
\end{eqnarray}

We haven't answered the questions concerning the higher derivatives, but we illustrate the
proposed technique by giving maple code we used to generate examples of functions which are
log-convex but whose inverse is not log-convex.
{\small
\begin{verbatim}
assume(xp>0);
testLogCon := u->  [simplify(subs(x = xp, u*diff(u, x$2)-diff(u, x)^2)),
         simplify(subs(x = xp, (-x*diff(u,x$2)-diff(u, x))/diff(u, x)^3))];
testLogCon(exp(-x^2)/x);
\end{verbatim}
}
Running the code gives that the function $\exp(-x^2)/x$ is not log-convex
as the first entry in the list that is returned changes sign.
However the second entry in the list is positive on $x>0$, so the
inverse function is log-convex.
Maple finds the inverse function in terms of Lambert W functions.

To find an example of a completely monotone function whose inverse is not completely monotone
one might feed completely monotone functions into code like that above,
outputting, also, the 3rd derivative of the inverse using equation~(\ref{eq:thirdD}).

\subsection*{Higher derivatives of quotients}

Establishing that $\mu(c)$ is completely monotone is equivalent to establishing that
$-\mu'$ is.
We have
$$ -\mu' 
= \frac{u}{v} , $$
with
$$u= \mu(1+\beta^2\mu^2) ,\qquad
v= \beta(1+\phi_1(\beta\mu)(1+\beta^2\mu^2)) .$$

This leads to considering the $n$=th derivative of a quotient:
$$\frac{d^n}{dx^n}\left(\frac{u(x)}{v(x)}\right)
=\frac{d^n}{dx^n}\left(u(x)\cdot \frac{1}{v(x)}\right)
=\frac{d^n}{dx^n}\left(u(x)\cdot v(x)^{-1}\right) .
$$
By the general product rule, this is equal to
$$\sum_{k=0}^n\binom{n}{k} \frac{d^{n-k}}{dx^{n-k}} u(x)\cdot\frac{d^k}{dx^k}v(x)^{-1} .
$$
So the problem reduces to finding the $k$-th derivative of $v(x)^{-1}$. 
This can be done by applying Faà di Bruno's formula for the $n$-th derivative of a composition of functions.

A reference related to this is\\
Robert A. Leslie,\\
How Not to Repeatedly Differentiate a Reciprocal,\\
{\it The American Mathematical Monthly},
{\bf 98}(8) (Oct., 1991), 732-735.\\
DOI: 10.2307/2324425\\
URL: \verb$http://www.jstor.org/stable/2324425$

\medskip
A source of some of the references and formulae is:\\
{\small
\verb$https://math.stackexchange.com/questions/5357/whats-the-generalisation-of-the-quotient-rule-for-higher-derivatives?rq=1$
}

A recurrence is
$$\frac{d^n}{dx^n}\left(\frac{u(x)}{v(x)}\right)
=\frac{1}{v(x)}\left(u^{(n)}(x)-
n!\sum_{j=1}^n \frac{v^{(n+1-j)}(x)}{(n+1-j)!}
\frac{ (\frac{u(x)}{v(x)})^{(j-1)}}{(j-1)!}\right) .
$$
See:\\
Christos Xenophontos,\\
A formula for the $n$-th derivative of the quotient of two functions, (2007)
\medskip

\noindent
Related papers:\\
F Gerrish ,\\
A useless formula?
{\it Mathematical Gazette}, 1980 - cambridge.org\\
\smallskip
P Shieh, K Verghese,\\
A general formula for the nth derivative of $1/f(x)$,
{\it American Mathematical Monthly}, 1967

\newpage


\section*{Completely monotone functions and related topics}


This appendix has general facts concerning
completely monotone and absolutely monotonic functions and subsets thereof that
might be of relevance to our study of $\mu(c)$.


It may be that $\mu$ and hence $\mu_{(2)}$ are completely monotone.
However we note below that 
GA-convexity 
is not satisfied by every completely monotone function.

\subsection*{Log-convex functions}

\par\noindent
{\bf Lemma LC1.} {\it The function $f$ is log-convex on an interval $I$, if and only if for all 
$a, b, c \in{ I}$ with $a < b < c$, the following holds:
$$f(b)^{c-a} \le  f(a)^{c-b} \le f(c)^{b-a} . $$
}
\medskip
Hence for $r\le{1}\le{1/r}$,
$$ 1 \le \left( \frac{\mu(r)}{\mu(1)}\right)^{\frac{1}{r}-1} \left( \frac{\mu(\frac{1}{r})}{\mu(1)}\right)^{1-r} . $$

\vspace{1cm}

\subsection*{Completely monotone functions}

\par\noindent {\bf Definition.}
A real-valued function $f$ defined on $[0,\infty)$ is said to be
{\it completely monotone} (totally monotone, completely monotonic, totally monotonic)
  if   
 $(-1)^k f^{(k)}(x)\ge{0}$ for $x>0$ and $k=0,1,2,\ldots$.\\
Denote the set of completely monotonic functions on $[0,\infty)$ by ${\CM}$.
\medskip

\par\noindent
{\bf Theorem CM1.} {\it
The set ${\CM}$ forms a convex cone: $(t_1 f_1 + t_2 f_2)\in{\CM}$
for all nonnegative numbers $s$, $t$ and all $f_1\in{\CM}$ and $f_1\in{\CM}$.\\
The set ${\CM}$ is also closed under multiplication and point-wise convergence. That is
$$f_1(x) f_2(x) \in {\CM}\qquad{\rm	and\ \ }
\lim_{n\rightarrow\infty} f_n(x) \in{\CM}, $$
where $f_n(x)\in{\CM}$ for all $n\ge{1}$ and their point-wise limit exists for any $x > 0$.}
\medskip

\par\noindent
{\bf Theorem CM2.} {\it Let $f(x)\in{\CM}$. and let $h(x)$ be nonnegative with its derivative in ${\CM}$.
Then $f(h(x)\in{\CM}$.}
\medskip

\par\noindent
{\bf Corollary CM2.} {\it Let $f(x)\in{\CM}$ and $f(0) < \infty$. Then the function
$$ -\log\left( 1-\frac{f(x)}{A}\right) . \qquad
A\ge{f(0)}, 
$$
is ${\CM}$. From this it follows that
$$\frac{f'(x)}{A-f(x)}. \qquad A\ge f(0) $$
is ${\CM}$ since this reduces to minus the derivative of the previous expression.
}
\smallskip

\par\noindent This corollary is given in~\cite{MS01}.

\medskip

From the derivative of the last function, we have
$$\frac{d}{d x}  \frac{f'(x)}{A-f(x)}
= \frac{ f'' (A-f) + (f')^2}{(A-f)^2} \le {0} . $$
Rearranging this gives
$$ f f'' - (f')^2\ge A f'' \ge{0} . $$
In particular, any $f\in\CM$ is log-convex.
\smallskip

There is a relationship of $\CM$ functions and Laplace transforms.
Define
$$F(r)= \int_0^\infty \exp(-r t) f(t) \, dt .$$
If the above integral is bounded for all $r>0$ and $f(t)\ge{0}$ for all $t>0$
then $F\in\CM$.
The converse is also true.
\smallskip

That the Laplace transform of a positive function is log-convex can be proved using the
Cauchy-Schwarz inequality.  Suppose $f(t)\ge{0}$. Then
\begin{eqnarray*}
F(\frac{r_1+r_2}{2})
&=&\int_0^\infty \exp(-\frac{r_1+r_2}{2}\, t) f(t)\, dt \\
&=&  \int_0^\infty \exp(-\frac{r_1}{2}\, t) \sqrt{f(t)}\,  \exp(-\frac{r_2}{2}\, t) \sqrt{f(t)} \, dt
\le \sqrt{F(r_1)\, F(r_2)} .
\end{eqnarray*}
See \S4.8 of~\cite{Wa92}.

\medskip

If we were to attempt to establish 
GA-convexity 
via Laplace transforms of positive functions, one
might begin with
$$ F(r)+F(\frac{1}{r})-2 F(1)
= \int_0^\infty k(r,t) f(t)\, dt ,
$$
where
$$ k(r,t)=\exp(-r t) + \exp(-\frac{t}{r}) - 2\exp(-t) . $$
Also define
$$ k_c(r,t)=\exp(-r t) + \exp(-\frac{t}{r}) - 2\exp(-\frac{1}{2}(r+\frac{1}{r})\, t) \ (\ge{k(r,t)})\ . $$
Now $k_c(r,t)\ge{0}$ for $r>0$ and $t>0$ and this is another way to show $F$ is convex.
It happens that, for every $r>0$,  $k(r,t)$ takes on both signs, and, as a consequence, there are functions $F\in\CM$
for which the Laplace representation does not yield GA-convexity. 
The fact that $\mu_{(2)}$ does is as a consequence of it being the inverse of the
completely monotone (indeed Stieltjes) function $\phi_2$.
At fixed $r>0$, $k(r,t)<0$ for $0<t<1/2$ and $k(r,t)>0$ for $t>1$, and there is a unique $t_{0}(r)$ in
the interval $(1/2,1)$ where $k(r,t_{0}(r))=0$.

\medskip

Another published statement, a special case of which is that
any $f\in\CM$ is log-convex is as follows:\\
{\it Let $f\in{\CM}$ . Then
$$ (-1)^{nk}\left( f^{(k)}(x)\right)^n \le (-1)^{nk} \left( f^{(n)}(x)\right)^k (f(x))^{n-k} $$
for all $x>0$ and integers $n\ge{k}\ge{0}$.}\\
In particular, for $n=2$ and $k=1$ we have that any completely monotonic function is log-convex.

\subsection*{Absolutely monotonic  functions}

A function $f(x)$ is {\it absolutely monotonic} in the interval $a<x<b$ if it has nonnegative derivatives of all orders in the region, i.e.,
$f^{(k)}(x)\ge{0}$ for all $x$ in the interval and $k=0,1,2,\ldots$\ . 
Denote by $\AM{(a,b)}$ the set of all functions {absolutely monotonic} in the interval $a<x<b$.
\medskip

\par\noindent
{\bf Theorem AM1.} {\it
The set ${\AM}{(a,b)}$ forms a convex cone: $(t_1 f_1 + t_2 f_2)\in{\AM}{(a,b)}$
for all nonnegative numbers $s$, $t$ and all $f_1\in{\AM}$ and $f_2\in{\AM}{(a,b)}$.\\
The set ${\AM}$ is also closed under multiplication and point-wise convergence. That is
$$f_1(x) f_2(x) \in {\AM}{(a,b)}\qquad{\rm	and\ \ }
\lim_{n\rightarrow\infty} f_n(x) \in{\AM}{(a,b)}, $$
where $f_n(x)\in{\AM}{(a,b)}$ for all $n\ge{1}$ and their point-wise limit exists for any $x > 0$.}
\medskip

In particular the function $X\tan(X)$ is absolutely monotonic on $[0,\pi/2)$.
\medskip

Widder (1941)~\cite{Wi41} gives:

\par\noindent
{\bf Theorem AM2.} {\it
$f\in\AM$ and $g\in\CM$ then the composition $f\circ{g}\in\CM$.
}

\subsection*{Miscellaneous topics}

Further classes of functions, Stieltjes functions, Bernstein functions, etc., are treated in~\cite{SSV}.

Let {\bf CMI} tbe the set of functions $\phi$ with domain and range $(0,\infty)$, 
with  $\phi\in\CM$, and for which the inverse $\phi^{-1}$ is also in $\CM$.
The set {\bf CMI} is nonempty as
$f(\alpha,z)=z^{-\alpha}$ is in $\CM$ when $\alpha>0$ and
$f(1/2,f(2,z))=z$ so $f(1/2,\cdot)$ has as its inverse $f(2,\cdot)$.
The question arises as to whether either of the functions $\phi_1$ or $\phi_2$ 
is in {\bf CMI}.
There are two questions related to this.\\
(i) If the $\phi$ are in {\bf CMI} how might this help with estimates related to the Robin eigenvalue
treated in \citeKWRob.\\
(ii) Under what conditions is the inverse of a function $f\in\CM$ also in $\CM$? And, do our $\phi$ satisfy these conditions?

\medskip

\subsubsection*{Further comments on Stieltjes functions}

(The next paragraph is based on\\
Stieltjes functions of finite order and hyperbolic monotonicity\\
Lennart Bondesson       and     Thomas Simon\\
arXiv:1604.05267v1\\
but most of that paper isn't relevant here.)

It was shown by Widder - see Theorem 1 of~\cite{So10} --  that 
$f\in{\cal S}$ if and only if $f$ is smooth and
$$(x^n f)^{(n)} \in\CM\qquad \forall n\ge{0} .$$
Another theorem 
Theorem 18b p. 366 in~\cite{Wi41} -
states that a non-negative function $f$ is in ${\cal S}$ if and only if it is smooth and such that
$$(-1)^{n-1} (x^n f)^{(2n-1)}   \ge{0}  \ \forall n\ge 1.$$

\medskip

Any nonzero Stieltjes function $\phi$ is logarithmically completely convex,
the definition of which is
$$ f {\mbox{\rm\ \  is\ \  log-}}\CM
\Longleftrightarrow (-1)^n \frac{d^n}{d x^n} \log(f(x)) \ge{0} . $$
C. Berg, Integral representation of some functions related to the gamma function, 
{\it Mediterr. J. Math.} {\bf 1}(4) (2004),  433Ð439;\\ 
\verb$http://dx.doi.org/10.1007/s00009-004-0022-6$\\
B.-N. Guo and F. Qi, 
A completely monotonic function involv- ing the tri-gamma function and with degree one, 
{\it Appl. Math. Comput.} {\bf 218}(19) (2012), 9890Ð9897;\\
\verb$http://dx.doi.org/10.1016/j.amc.2012.03.075$\\
F. Qi and C.-P. Chen, 
A complete monotonicity property of the gamma function, 
{\it Math. Anal. Appl.} {\bf  296} (2004), 603Ð607; \\
\verb$http://dx.doi.org/10.1016/j.jmaa.2004.04.026$\\
F. Qi and B.-N. Guo, 
Complete monotonicities of functions involving the gamma and digamma functions, 
{\it RGMIA Res. Rep. Coll.} {\bf 7}(1) (2004),  63Ð72;\\
\verb$http://rgmia.org/v7n1.php$

\section*{Decreasing convex involutions} 

Define ${\rm invol}_0(x)=\frac{1}{x}$, which is a decreasing convex involution.
The only involution which is Stieltjes is $1/x$: see Fact~\ref{fac:Fact3}.
A question we have asked ourselves, as yet unanswered is:\\
Might there be other involutions, invol for which,
with $f$ and $g$ both positive decreasing convex functions, $g$ being the inverse of $f$,
${\rm invol}\circ(g)$ concave implies  $f\circ{\rm invol}$ is convex?

As an example of another decreasing convex involution on $(0,\infty)$ we instance
$${\rm invol}_1(x)=\ln \left({\frac {e^{x}+1}{e^{x}-1}}\right)
=\ln ( \coth(\frac{x}{2}))
= 2\, {\rm arctanh}(\exp(-x))
= 2\, {\rm arccoth}(\exp( x)). $$
Note
$${\rm arccoth}(z)= \frac{1}{2}\log\left(\frac{z+1}{z-1}\right)
\ {\rm for\ }\ z^2>1,\qquad
{\rm arctanh}(z)= \frac{1}{2}\log\left(\frac{1+z}{1-z}\right)
\ {\rm for\ }\ 0<z^2<1 . $$
One can show ${\rm invol}_1\in\CM$.
It appears that ${\rm invol}_1$ is GG-concave.

\medskip
As another example of another decreasing convex involution on $(0,\infty)$ we instance
$${\rm invol}_2(x)=\exp\left(\frac{1}{\log(1+x)}\right) - 1. $$
Again it appears that ${\rm invol}_2\in\CM$. Formally, it is
$$\exp\left(\frac{1}{\log(1+x)}\right) - 1
= \sum_{k=1}^\infty \frac{1}{k! (\log(1+x))^k} $$
which is $\CM$ as $1/\log(1+x)$ is.
Actually $1/\log(1+x)\in{\cal S}$ as in~\cite{SSV} item 26, p228
(but, as the product of functions in $\cal S$ need not be in $\cal S$ the
extra information does not appear useful).
It appears that ${\rm invol}_2$ is GG-convex.

\medskip
We remark that neither ${\rm invol}_1$ nor ${\rm invol}_2$ is Stieltjes.
As stated above, the only involution which is Stieltjes is $1/x$: see Fact~\ref{fac:Fact3}.
If one has $s\in{\cal S}$ then its reciprocal $(1/s)\in\CBF$,
hence Bernstein, hence increasing, concave.
However the reciprocals of ${\rm invol}_1$ and of ${\rm invol}_2$ --
both increasing functions, of course --
are neither convex, nor concave.

\newpage
\section*{Some complex variable}
\label{pg:pgcomplex}

Another approach to extablishing $\mu$ to be completely monotone (if, as we guess, it is)
is via complex variable methods.
There are two separate strands to our efforts.
\begin{itemize}

\item
The first is a development from the famous paper~Burniston and Siewert~\cite{BS73}.
It may be that the integral representation of $\mu$ may yield properties of $\mu$.

\item
The second, less well developed, involves investigations related to Pick functions
in general, and functions involving square-roots, tangents and arctangents
in particular. 
(The function denoted $X(w)$ in the following may be worthy of study.)

\end{itemize}

\subsection*{A formula from Burniston and Siewert~\cite{BS73}}

The implicit equation for $\mu(c)$,
$$ \mu \tan(c\mu)= \frac{1}{\beta} ,$$
can be recast with $X=c\mu$ and $w=c/\beta$ as
$$ X\tan(X) = w .\eqno{(A)}$$
(The variable here denoted $w$ is, elsewhere in our papers, denoted $\hat c$,
and in~\cite{BS73} is denoted by~$\omega$.)

Define, as in~\cite{BS73} equation (2.32),
$$\Lambda_0(z) 
= z\left(z-\frac{1}{2 w}\log\left(\frac{z-1}{z+1}\right)\right) . $$
The smallest postive root, when $w>0$, is given by
$$ X(w) = \sqrt{\frac{\pi w}{2}}\,
\exp\left(-\,\frac{1}{\pi}\,
\int_0^1 \left({\rm arg}(\Lambda_0^+(t))+\frac{\pi}{2}\right)\, \frac{dt}{t} \right) . $$

Manipulating this suggested
$$ X(w) = \sqrt{\frac{\pi w}{2}}\,
\exp\left(-\,\frac{1}{\pi}\,
\int_0^1 \arctan\left(\frac{
\log\left(\frac{1+t}{1-t}\right) + 2 t w}{\pi}\right) \, \frac{dt}{t} \right) .
$$
The log term in the integrand can be written in terms of the inverse hyperbolic tangent function:
$$ \log\left(\frac{1+t}{1-t}\right) = 2\,  {\rm arctanh}(t). $$
The definite integral can be evaluated at a few particular values of $w$.
For example, at $w=0$ we get
$$\exp\left(-\,\frac{1}{\pi}\,
\int_0^1 \arctan\left(\frac{
\log\left(\frac{1+t}{1-t}\right) }{\pi}\right) \, \frac{dt}{t} \right) 
=\sqrt{\frac{2}{\pi}} , $$
and this is consistent with $X(w)\sim\sqrt{w}$ as $w\rightarrow{0}$.

\medskip

There are a number of tasks which should be attempted.\\
Check the integral above  satisifies the d.e. known to be satisfied by the 
$X(w)$ satisfying the implicit relation given at equation (A).\\
Perhaps $X(w)$ is Bernstein. (If this is the case, then $\mu(c)$ is completely monotone.)\\
Perhaps, for $c_0\ge{0}$ and $c_1>0$, $\arctan(c_0 +c_1 \sqrt{x})$ is Bernstein in $x$.\\
If the line above is true then it may have implications for $X$.

Maple code for my variant of the Burniston-Siewert representation for $X(w)$ follows.

{\small
\begin{verbatim}
solInt := proc (w) 
 local tr; 
 evalf(sqrt((1/2)*Pi)*exp((1/2)*log(w)-
    evalf(Int(arctan((log((tr+1)/(-tr+1))+2*tr*w)/Pi)/tr, tr = 0 .. 1))/Pi)) 
end proc:

solInt(0.5);
fsolve(X*tan(X) = 0.5, X, 0 .. 2); # both give about 0.65327
\end{verbatim}
}

\subsection*{Properties of square roots and arctan}

From~\cite{AS65}: 
$$\leqno{(4.3.57)} 
 \tan(x+i y)
=\frac{\sin(2x)+i\sinh(2y)}{\cos(2x)+\cosh(2y)} ,
$$
So $\tan(z)$ takes the upper half-plane into the upper half-plane.
$$\leqno{(4.4.39)}
 \arctan(x+i y)
=\frac{1}{2}\arctan\left(\frac{2x}{1-x^2-y^2}\right) +
\frac{i}{4} \log\left(\frac{x^2+(y+1)^2}{x^2+(y-1)^2}\right) .
$$
So $\arctan(z)$ takes the upper half-plane into the upper half-plane.

Using the branches as in Maple14:
\begin{itemize}

\item $\arctan(1/z)$ takes the upper half-plane to the lower half-plane.

\item $\arctan(1/\sqrt{z})$ takes the upper half-plane to the lower half-plane.

\item $1/\sqrt{z}$ takes the upper half=plane to the lower half-plane.

\end{itemize}

\subsection*{Use of Cauchy Residue Theorem for solving nonlinear equations}

Suppose $S(z)$ has a simple pole at $z_0$.
Cauchy's Theorem gives, for a closed path $\gamma$ sufficiently close to $z_0$,
$$ \int_\gamma (z-z_0) S(z)\, dz = 0 . $$
Hence
$$ z_0 = \frac{\int_\gamma z S(z)\, dz}{\int_\gamma S(z)\, dz} . $$
This is applied to our nonlinear equation in~\cite{LS02}, with
$$ S(z)= \frac{1}{z\sin(z)-c\cos(z)} . $$

\subsection*{Another approach to inverses of functions of a complex variable}

Let $f:A\mapsto{B}$ be analytic, one-to-one and onto.
Let $w\in{B}$ and let $\gamma$ be a small circle centred at $z_0$ in $A$.
Then the inverse to $f$ is given by the formula
$$ f^{-1}(w) = \frac{1}{2\pi i}\int_\gamma 
\frac{f'(z) z}{f(z)-w}\, dz ,$$
for $w$ sufficiently close to $f(z_0)$.
(See Marsden {\it Basic Complex Analysis} p397 Q14.)

\subsection*{Pick functions}

From~\cite{SSV}. An analytic function that preserves the upper half-plane is called
a {\it Pick function} (or a {\it Nevanlinna function} or a {\it Nevanlinna-Pick function}).\\
In other words, a Pick
 function is a complex function which is an analytic function on the open upper half-plane 
and has non-negative imaginary part.\\
 A Pick function maps the upper half-plane to itself or to a real constant, 
but is not necessarily injective or surjective. 
\medskip

Clearly if $M(z)$ and $N(z)$ are Nevanlinna-Pick functions, 
then the composition $M(N(z))$ is a Nevanlinna-Pick function as well.
\medskip

Examples of Nevanlinna-Pick functions:

\begin{itemize}

\item $\sqrt{z}$. \cite{AS65} (3.7.27) gives, with $r-\sqrt{x^2+y^2}$,
$$\sqrt{x+ iy}
= \sqrt{\frac{1}{2}(r+x)}+ i\, {\rm sign}(y)\, \sqrt{\frac{1}{2}(r-x)} ,
$$

\item $-1/\sqrt{z}$ 

\item $\tan(z)$ (an example that is surjective but not injective)

\end{itemize}

\medskip
\noindent
{\sc Theorem.~\cite{SSV}}{\it Suppose that $b$ is Bernstein
(so, in particular a non-negative function on $(0,\infty)$).
Then\\ 
{\rm Cor 3.7(iv)} $b(w)/w$ is in $\CM$.\\
{\rm Thm 3.6} $\exp(-s b(w))$ is in $\CM$ for any $s>0$.}

\medskip
\noindent
{\sc Theorem.~\cite{SSV} 6.2.}{\it Suppose that $b$ is a non-negative function on $(0,\infty)$.
Then the following conditions are equivalent.\\
(i) $b\in\CBF$.\\
(ii) The function $z\mapsto b(z)/z$ is in $\cal{S}$.\\
(iv) $b$ has an analytic continuation to the upper half-plane such that 
{\rm Im}$(b(z))>0$ for all $z$ in the open upper half-plane and
such that the limit, $b(0+)=\lim_{(0,\infty)\ni{z}\rightarrow{0}} b(z)$, exists and is real (nonnegative).\\
(v) $b$ has an analytic continuation to the cut complex plane $C\setminus(0,\infty)$
such that ${\rm Im}(z){\rm Im}(b(z))>0$ and 
such that the limit, $b(0+)=\lim_{(0,\infty)\ni{z}\rightarrow{0}} b(z)$, exists and is real (nonnegative).}

\medskip
When $b(z)=z\,\phi_2(z)$, $b(0+)=0$.

\section*{Miscellaneous associated with I.\S\ref{sec:transcendental} and II.\S\ref{sec:explicit}}

\noindent
O. F. de Alcantara Bonfima David J. Griffiths,
Exact and approximate energy spectrum for the finite square well and related potentials,
{\it Amer J. Physics} {\bf 74} (1) (2006) p43 .\\
\verb$DOI: http://dx.doi.org.dbgw.lis.curtin.edu.au/10.1119/1.2140771$

\noindent
V. Barsan,
Algebraic approximations for transcendental equations with applications in nanophysics,
{\it Philosophical Magazine}, September 2015\\
{\bf 95}, No. 27, 3023Ð3038\\
\verb$http://dx.doi.org/10.1080/14786435.2015.1081425$

\section*{Formerly with I.\S\ref{sec:transcendental} and II.\S\ref{sec:Oned}}

\noindent{\bf In the subsection on properties of $\phi$}

$\phi_2$ is better than logconvex.
$1/\phi_2$ is concave.

Maple code treating this topic is presented at the URL given in the abstract.
\bigskip

\subsection*{Further properties of log-convex functions, etc.}

Learnt from:\\
Tomislav Doslic, Log-convexity of combinatorial sequences from their convexity,
{\it Journal of Mathematical Inequalities}  September 2009 DOI: 10.7153/jmi-03-43
A positive function $f(x)$ is logconvex iff the function $\exp(\alpha x)f(x)$ is convex for all real $\alpha$.\\
(From this one can get a simple proof that the sum of logconvex functions is logconvex.)\\
D. V. Anosov, On the sum of log-convex functions, 
{\it Math. Prosv.}, {\bf 5} (2002), 158Ð163 (in Russian).

\begin{itemize}
\item
Further possible sources of information on logconvex functions, etc, is:\\
J.E.Pecaric, F.Proschan AND Y.L.Tong,
{\it Convex Functions, Partial Orderings and Statistical Applications}, Academic Press, Boston, 1992.

\item M. Avriel {\it Generalized concavity} (1987)\\

\end{itemize}
\goodbreak

\newpage

\begin{center}
{\huge{\textbf{\textsc{ Part II: Inequalities for the fundamental Robin eigenvalue for the Laplacian on
$N$-dimensional rectangular parallelepipeds
}}}}
\end{center}

\bigskip
\section*{Abstract}
\label{pg:absII}
Amongst $N$-dimenstional rectangular parallelepipeds ({\boxshape}es) of a given volume,
that  which has the smallest fundamental Robin eigenvalue for the Laplacian is the $N$-cube.
We give an elementary proof of this isoperimetric inequality based on the well-known formulae for the eigenvalues.
Also treated are various related inequalities which are amenable to investigation using the formulae
for the eigenvalues.

\section{Introduction}\label{sec:rIntro}

\subsection{Overview}\label{subsec:Overview}

This paper has its origins in earlier papers by the authors.
In the more recent of these, currently in a pre-publication form intended to supplement a shorter paper, ~\cite{KW16s},
 the application required $N=2$ and Theorem 3 of that paper is the $N=2$ case of
Theorem~\ref{thm:thm3ArXivV2} in this paper.
The main physical application in the older paper~\cite{MK94}, heat flow,  required $N=3$ but the paper often considered general values of $N\ge{2}$.
In the general setting, $\Omega$ is a bounded simply-connected domain in $R^N$, with piecewise $C^1$ boundary.
We will soon study the special case when $\Omega$ is a rectangular parallelepiped, here called a \boxshape.
(Other words for the same shape include rectangular cuboid, hyper-rectangle, and $N$-orthotope.)
The partial differential equation problems of~\cite{MK94} can be written, with $\torsforce\ge{0}$ and $\beta\ge{0}$ given, as
\begin{equation}
\Delta u +\lambda u= -\torsforce \qquad{\rm in\ }\ \Omega, \qquad
\beta\frac{\partial u}{\partial n}+ u= 0\  {\rm\ on\ }\partial\Omega \ .
\label{eq:gen}
\end{equation}
Here $n$ is the outward normal, and the second equation is called a `Robin boundary condition'.
(In the context of fluid flows  with slip at the boundary it is called Navier's boundary condion.
In elasticity, it arises with boundaries that are elastically supported, see e.g.~\cite{Sp03}.
In the context of heat diffusion it is sometimes called Newton's law of cooling.
See~\cite{GA98}.)
The case $\lambda=0$ and $\torsforce=1$ is Problem $P(\beta)$ of~\cite{MK94}.
The case $\torsforce$=0 is Problem $H(\beta)$ of~\cite{MK94}, and it is this Helmholtz problem that is the subject of this paper.
Problem $H(\beta)$ is a
Sturm-Liouville style eigenvalue problem where one seeks solutions $(\lambda_k, u_k)$, with $ u_k$ not identically zero, satisfying
\begin{equation}
\sum_{j=1}^N\frac{\partial^2  u_k}{\partial x_j^2} \
+\lambda_k  u_k = 0\ {\rm\ in\ }\ \Omega,\qquad
\beta\frac{\partial u_k}{\partial n}+ u_k = 0\  {\rm\ on\ }\partial\Omega \ .
\label{eq:Helmholtz}
\end{equation}
The $\lambda_k$ are all positive and  ordered so that $\lambda_{k+1}\ge{\lambda_k}$.
Completeness of the eigenfunctions and expansions in terms of them is
treated in classical mathematical methods books such as~\cite{CHI57}.
When $k=1$, `the fundamental', the eigenfunction is one-signed, and we take it to be positive.

This note concerns the fundamental eigenvalue, that is $k=1$, and $\Omega$ is
the \boxshape
\begin{displaymath}\Omega=(-a_1,a_1)\times(-a_2,a_2)\times \ldots \times(-a_N,a_N) .\end{displaymath}
We sometimes indicate the dependence on the list of $a_j$ by writing $\Omega({\mathbf a})$.
Some of our results, e.g. Theorem~\ref{thm:thm3ArXivV2},
(but not Theorems~\ref{thm:thmMonot},~\ref{thm:thm3MIA},~\ref{thm:thmScale} or~\ref{thm:thmBorell})
require the volume to be fixed.
Then we consider the family of {\boxshape}es, with volume $(2h)^N$, with $a_j=r_j h$ with all $r_j>0$ and
the product of the $r_j$ equal to 1.
We denote the volume of $\Omega$ by $|\Omega|$, so that
\begin{equation}
|\Omega({\mathbf a})|=2^N\prod_{j=1}^N a_j = (2 h)^N .
\label{eq:vola}
\end{equation}
Our goal throughout the paper is to present results which are valid for $\beta\ge{0}$, and it is usually the
situation that the $\beta=0$ case is well known.
Of our results the easiest to state is the following isoperimetric inequality.

 \begin{theorem}\label{thm:thm3ArXivV2}
Amongst all {\boxshape}es with a given volume, that which has the smallest fundamental Robin eigenvalue
is the cube.
\end{theorem}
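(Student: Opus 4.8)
The plan is to reduce this isoperimetric statement to a one-variable convexity fact already established in Part~I. First I would exploit the product structure of the box. Because $\Omega=(-a_1,a_1)\times\cdots\times(-a_N,a_N)$ is a Cartesian product and the Robin condition in~(\ref{eq:Helmholtz}) acts, on each pair of opposite faces, only through the single normal coordinate, the eigenvalue problem separates. Writing a candidate fundamental eigenfunction as $u(x)=\prod_{j=1}^N\cos(\mu_j x_j)$, the boundary condition on the faces $x_j=\pm a_j$ forces $\mu_j\tan(\mu_j a_j)=1/\beta$; that is, $\mu_j=\mu(a_j)$ is the smallest positive root of the transcendental equation~(\ref{eq:muc}) with $c=a_j$. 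The fundamental eigenvalue is therefore
\begin{equation}
\lambda_1(\mathbf a)=\sum_{j=1}^N \mu(a_j)^2=\sum_{j=1}^N \mu_{(2)}(a_j),
\end{equation}
taking the smallest root in each direction so that the product mode is one-signed and hence genuinely the fundamental.

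With this reduction in hand, Theorem~\ref{thm:thm3ArXivV2} becomes the minimisation of $\sum_{j}\mu_{(2)}(a_j)$ over positive $a_j$ subject to the fixed-volume constraint that $\prod_{j} a_j$ be constant, which comes from~(\ref{eq:vola}). I would pass to the variables $t_j=\log a_j$, turning the multiplicative constraint into the linear one $\sum_j t_j=S$ for a fixed $S$, and the objective into $\sum_j \mu_{(2)}(e^{t_j})$. The decisive ingredient is that $t\mapsto\mu_{(2)}(e^t)$ is convex, i.e. that $\mu_{(2)}$ is GA-convex, which is exactly the entry recorded in the display for $\mu_{(2)}$ in~\S\ref{subsec:Propmu}. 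A single application of Jensen's inequality then finishes it: for convex $g(t)=\mu_{(2)}(e^t)$,
\begin{equation}
\sum_{j=1}^N g(t_j)\ \ge\ N\,g\!\left(\frac{1}{N}\sum_{j=1}^N t_j\right)=N\,g(S/N),
\end{equation}
with equality precisely when all $t_j$ are equal, that is when all $a_j$ coincide and $\Omega$ is the cube. The convexity input itself I would justify by recalling from Part~I that $\phi_2$ is completely monotone (Theorem~\ref{lem:lem1}) and hence log-convex, i.e. AG-convex; since $\mu_{(2)}$ is the inverse of the positive, decreasing, convex function $\phi_2$, Theorem~\ref{thm:thmpqqp} converts this AG-convexity into the GA-convexity of $\mu_{(2)}$.

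Given the separated formula and this convexity input, the argument here is short; the genuine analytic difficulty lives in the GA-convexity of $\mu_{(2)}$, and I would regard recognising that it is precisely \emph{this} property — convexity in the logarithmic variable, matched to the multiplicative volume constraint, rather than ordinary or log-convexity — that governs the cube's optimality as the conceptual crux. The remaining care within Part~II is twofold. The chief structural obstacle is making the separation airtight: I must confirm that the product mode built from the smallest root in each coordinate really is the global fundamental, which is where the one-signedness of the ground-state eigenfunction is used. The secondary point is uniqueness of the minimiser, for which I would use strictness of the convexity of $t\mapsto\mu_{(2)}(e^t)$, inherited from the strict inequalities underlying the complete monotonicity of $\phi_2$, so that equality in Jensen forces all the $a_j$ equal and no non-cubical box attains the minimum.
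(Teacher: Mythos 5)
Your proposal is correct and is essentially the paper's own argument: the paper likewise reduces the theorem to the GA-convexity of $\mu_{(2)}$ (its ``Theorem~\ref{thm:thm3ArXivV2} Restated''), and obtains that property exactly as you do, from the AG-convexity (log-convexity) of the completely monotone Stieltjes function $\phi_2$ converted by Theorem~\ref{thm:thmpqqp} of Part~I into GA-convexity of its inverse. Your Jensen-in-logarithmic-variables step is precisely the content of the paper's equation~(\ref{eq:gGA}) and of its alternative proof via Lemma~\ref{lem:lemMSepConv} and Theorem~\ref{thm:thm3MIA}, so no genuinely different route is involved.
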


There is a geometric isoperimetric inequality relating volume and perimeter for \boxshape{es},  namely that there is
a $C_\square(N)>0$ (whose numeric value is unimportant here) such that
$$C_\square(N) |\Omega|^{N-1}\le |\partial\Omega|^N ,\qquad
{\rm and\ \ } C_\square(2)=16, \ \ C_\square(3)=36 .
$$
This combines with the domain monotonicity for  {\boxshape}es, Theorem~\ref{thm:thmMonot}, to yield the following Corollary.
With $\beta>0$ and $N=2$ we will see
(in \S\ref{subsec:Othergeom}) another proof as a corollary of Theorem~\ref{thm:thmBorell}.

\begin{corollary}\label{cor:corPerim}
Amongst all \boxshape{es} with a given perimeter, that which has the smallest eigenvalue is the cube.
\end{corollary}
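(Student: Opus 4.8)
The plan is to chain together the three ingredients the text has already assembled: the purely geometric isoperimetric inequality $C_\square(N)\,|\Omega|^{N-1}\le|\partial\Omega|^N$ (with equality exactly for the cube), the domain monotonicity of Theorem~\ref{thm:thmMonot}, and the fixed-volume eigenvalue isoperimetric inequality of Theorem~\ref{thm:thm3ArXivV2}. Fix a perimeter $P$, let $\Omega$ be an arbitrary \boxshape{} with $|\partial\Omega|=P$, and write $V=|\Omega|$ for its volume. Let $C$ be the cube with $|\partial C|=P$ and set $V_{\max}=|C|$.

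First I would read the geometric inequality in the form $|\Omega|^{N-1}\le|\partial\Omega|^N/C_\square(N)$. Since the cube attains equality there, $V\le V_{\max}$; that is, among all \boxshape{es} of a given perimeter the cube has the largest volume. Next, let $C_V$ denote the cube whose volume equals $V$. Applying Theorem~\ref{thm:thm3ArXivV2} to $\Omega$ and the cube of the same volume $V$ gives $\lambda_1(\Omega)\ge\lambda_1(C_V)$. Finally, since $V\le V_{\max}$ the half-side of $C_V$ is no larger than that of $C$, so (taking both cubes concentric) $C_V\subseteq C$, and the domain monotonicity of Theorem~\ref{thm:thmMonot} yields $\lambda_1(C_V)\ge\lambda_1(C)$. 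Concatenating the two inequalities gives $\lambda_1(\Omega)\ge\lambda_1(C_V)\ge\lambda_1(C)$, which is precisely the assertion that, among \boxshape{es} of perimeter $P$, the cube minimizes the fundamental Robin eigenvalue.

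For the uniqueness implicit in the statement I would trace back the chain: $\lambda_1(\Omega)=\lambda_1(C)$ forces equality in both steps. Equality in Theorem~\ref{thm:thm3ArXivV2} forces $\Omega$ to be a cube, and equality in the monotonicity step forces $V=V_{\max}$, whence $\Omega=C$.

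The one point requiring care---and the step I expect to be the main obstacle---is the precise form of the domain monotonicity invoked through Theorem~\ref{thm:thmMonot}. For Robin boundary conditions domain monotonicity is genuinely subtler than in the Dirichlet case, so I would want the theorem stated for nested \boxshape{es} $\Omega(\mathbf a)\subseteq\Omega(\mathbf b)$ (equivalently $a_j\le b_j$ for every $j$) in the direction $\lambda_1(\Omega(\mathbf b))\le\lambda_1(\Omega(\mathbf a))$. Here it is applied only to two concentric cubes, one inside the other, which is its most benign instance: by separation of variables it reduces to the one-dimensional monotonicity of the Robin eigenvalue in the half-length of the interval, governed by the transcendental equation $\mu\tan(c\mu)=1/\beta$ of Part~I. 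Granting that citation, the corollary follows at once.
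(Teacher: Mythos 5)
Your proof is correct and follows essentially the same route as the paper, which obtains the corollary by combining the geometric inequality $C_\square(N)\,|\Omega|^{N-1}\le|\partial\Omega|^N$ (equality for the cube), Theorem~\ref{thm:thm3ArXivV2} at fixed volume, and the box domain monotonicity of Theorem~\ref{thm:thmMonot}. The paper only sketches this chain in one sentence; your write-up supplies the same argument in full, including the correct direction of Robin domain monotonicity for nested concentric cubes.
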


Our proofs of Theorem~\ref{thm:thm3ArXivV2} when $\beta>0$
(the simplest, given in~\S\ref{subsec:thm3}, being a consequence of Lemmas~\ref{lem:rlem1} and~\ref{lem:lem2})
are elementary.
However our search of the literature has failed to find the result, even with other proofs
(and we know that many alternative proofs are possible).
We are aware that results can be hard to locate in older literature.
Indeed in~\S{3} of~\cite{MK94} we unwittingly rediscovered the formulae for the fundamental Robin eigenfunction
for the equilateral triangle, a century and a half after Lam{\'e},
and only learnt that it was a rediscovery a decade after our paper.
Lam{\'e}'s work is referenced in this journal in~\cite{LPP12}.

When $\beta=0$, the result of Theorem~\ref{thm:thm3ArXivV2} is well-known, and very elementary.
Then one has the volume fixed $|\Omega|=2^N\prod_{j=1}^N a_j$ and minimizes
the fundamental {\it Dirichlet} eigenvalue
\begin{equation}\lambda_1(\beta=0)
= \frac{\pi^2}{4} \sum_{j=1}^N \frac{1}{a_j^2} .
\label{eq:lambda1beta0}
\end{equation}
Fixing the volume is equivalent to fixing the product $\prod_{j=1}^N {1}/{a_j^2}$,
and with this observation, we see that the $\beta=0$ case is equivalent to the
equality case of the AM$\le${GM} inequality.
(See~\cite{HLP34} \S2.5 p17.)
AM abbreviates `Arithmetic Mean', GM, `Geometric Mean'.
The  minimization problem for $\lambda_1$, when $\beta=0$,
is similar to that in which one seeks, instead, to minimize the perimeter
(again with a constant $C(N)$ which is unimportant here)
\begin{displaymath}|\partial\Omega|
= C(N) |\Omega| \sum_{j=1}^N \frac{1}{a_j} .\end{displaymath}

Return now to the situation where $\beta\ge{0}$.
The fundamental eigenvalue is given by a formula 
$$\lambda_1(\Omega({\mathbf a}))=\sum_{j=1}^N \mu({a_j})^2 .$$
The function $\mu$,
as given in~\S\ref{sec:explicit} equation~(\ref{eq:muXmuY}),
also depends on $\beta$, but in contexts where $\beta$ is fixed we omit it.
When the value of $\beta$ needs to be indicated, we write $\mu(\beta,a)$ as, for example,
$\mu(0,a)=\pi/(2 a)$.
Our proofs of
the isoperimetric result of Theorem~\ref{thm:thm3ArXivV2}
follow from properties of $\mu$, or of its inverse.
The inverse of $\mu$ is an elementary function and this leads to simple proofs.
The fact that $\mu(a)$ is convex, indeed log-convex, is, by itself,
insufficient to establish that the separable convex optimization problem,
minimize $\lambda_1(\Omega({\mathbf a}))$
subject to the volume constraint~(\ref{eq:vola}),
has as its solution that all $a_j$ are equal with $2a_j=|\Omega|^{1/N}$ for all $j$.
For the proof of Theorem~\ref{thm:thm3ArXivV2} additional properties of $\mu$ -- or of its inverse -- are needed.
What is actually needed is clear from the following restatement:

\smallskip
{\sc Theorem~\ref{thm:thm3ArXivV2} Restated.}\ {\it The function $\mu_{(2)}$, where $\mu_{(2)}(c)=\mu(c)^2$, is
GA-convex.}
\smallskip
The equation expressing this is that of~(\ref{eq:gGA}), with the function $g$ there being replaces by $\mu_{(2)}$.
For the definition of GA-convex, and other forms of generalised convexity used in this paper,
see~\citeKeconv.
The notation $\mu_{(2)}$ is introduced as, amongst other reasons,
 differentiation with respect to it is easier to read than
attempting to use $\mu^2$.

\medskip

An outline of our paper is as follows.
In \S\ref{subsec:Othergeom} we note some other results which are familiar in the case $\beta=0$, and
which extend to our \boxshape{es} with $\beta>0$.
For domains more generally much more has been established when $\beta=0$,
and there are proof techniques which are not available for $\beta>0$,
e.g. that which we review in~\S\ref{subsec:Steiner}.


The main subject of this paper is the \boxshape\ domain, and we return to this in
all subsequent sections.
In~\S\ref{sec:explicit} we give the exact solution for the eigenfunction,
and the implicit equation for its eigenvalue.
This is developed in~\S\ref{sec:Oned}.
These combine with a lemma from~\S\ref{subsec:SeparableC} to yield,
in~\S\ref{subsec:thm3}, our neatest proof of Theorem~\ref{thm:thm3ArXivV2}.
Further inequalities on the fundamental Robin eigenvalue
for \boxshape{es} we treat are summarised immediately below in \S\ref{subsec:Othergeom}.
Their proofs are in~\S\ref{sec:stated1p1}.
In the discussion in~\S\ref{sec:Discussion} we mention some open questions, both for the eigenvalue problem,
$H(\beta)$,
and the generalized torsion problem $P(\beta)$.

For domains more generally, much is known about the fundamental Robin eigenvalue.
The corresponding eigenfunction can be taken to be positive, and will be in this paper.
The generalization of the Faber-Krahn inequality is given in~\cite{Da06}.
In that context one has an isoperimetric result associated with varying over all bounded domains with a given volume,
with the $N$-dimensional ball as the optimizer.
In this paper we vary over {\boxshape}es with a given volume, again obtaining,
in Theorem~\ref{thm:thm3ArXivV2}, an `isoperimetric'  inequality.

\subsection{Other ways the geometry of \boxshape{es} affects $\lambda_1$}\label{subsec:Othergeom}

In dimensions $N\ge{2}$, general domain monotonicity, true for $\beta=0$,  isn't true when $\beta>0$.
Rectangles, \boxshape{es},  however, behave nicely, as noted in the following.

\begin{theorem}\label{thm:thmMonot}
When $\beta\ge{0}$, \boxshape{es} inherit domain monotonicity from the domain monotonicity that is
present when $N=1$ as given by the formula~(\ref{eq:lambda1Rect}).
That is, if $\Omega_1\subseteq\Omega_2$ then $\lambda_1$ satisfies the inequality
$\lambda_1(\Omega_1)\ge\lambda_1(\Omega_2)$.
\end{theorem}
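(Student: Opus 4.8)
The plan is to reduce the $N$-dimensional statement entirely to the one-dimensional monotonicity, exploiting the separable structure of the box eigenvalue. Because $\Omega(\mathbf a)$ is a product domain, the Robin problem~(\ref{eq:Helmholtz}) separates, and, as recorded in the text and developed in~\S\ref{sec:explicit}, the fundamental eigenvalue is
$$\lambda_1(\Omega(\mathbf a))=\sum_{j=1}^N \mu(a_j)^2,$$
where $\mu(a)$ is the square root of the one-dimensional fundamental eigenvalue on the interval of half-length $a$, i.e.\ the smallest positive root of $\mu\tan(a\mu)=1/\beta$, with eigenfunction $\prod_{j}\cos(\mu(a_j)x_j)$. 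Thus $\lambda_1$ is a sum of single-variable contributions, one per coordinate, and it suffices to control each summand separately. I take this separable formula as given.

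First I would record the one-dimensional monotonicity. For fixed $\beta\ge 0$ the map $a\mapsto\mu(a)^2$ is decreasing, which is precisely the $N=1$ domain monotonicity expressed by formula~(\ref{eq:lambda1Rect}); when $\beta>0$ it is immediate from the implicit differentiation
$$\frac{d\mu}{dc}=-\frac{\mu(1+\beta^2\mu^2)}{\beta+c(1+\beta^2\mu^2)}<0$$
of equation~(\ref{eq:dmu2dc}) (with the variable $c$ there playing the role of the half-length $a$), while for $\beta=0$ it is visible directly from $\mu(0,a)=\pi/(2a)$; since $\mu>0$, the squared function $a\mapsto\mu(a)^2$ is decreasing as well. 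Next I would pass from the set containment $\Omega_1\subseteq\Omega_2$ to a componentwise inequality of half-lengths. The fundamental Robin eigenvalue is invariant under translations, so up to translating I may write $\Omega_i=\prod_{j=1}^N(-a_j^{(i)},a_j^{(i)})$ with the same axes; projecting $\Omega_1\subseteq\Omega_2$ onto the $j$-th coordinate axis shows the projected interval of $\Omega_1$, of length $2a_j^{(1)}$, lies inside that of $\Omega_2$, of length $2a_j^{(2)}$, whence $a_j^{(1)}\le a_j^{(2)}$ for every $j$. Combining with the one-dimensional monotonicity gives $\mu(a_j^{(1)})^2\ge\mu(a_j^{(2)})^2$ termwise, and summing over $j$ yields
$$\lambda_1(\Omega_1)=\sum_{j=1}^N\mu(a_j^{(1)})^2\ \ge\ \sum_{j=1}^N\mu(a_j^{(2)})^2=\lambda_1(\Omega_2),$$
which is the claim.

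The main obstacle is the geometric reduction rather than any analysis: one must ensure that containment of boxes genuinely forces the componentwise inequality of the defining half-lengths, and the only delicate points are the position and orientation of the two boxes. Translation invariance of the eigenvalue removes the dependence on the centres, and the coordinate-projection argument settles the axis-aligned case; once $a_j^{(1)}\le a_j^{(2)}$ is secured, the conclusion is a term-by-term consequence of the already-established one-dimensional monotonicity, needing no further estimate. This is exactly the sense in which boxes \emph{inherit} the $N=1$ monotonicity, and it explains why the result survives for $\beta>0$ even though general domain monotonicity fails there: the sum structure lets each coordinate be compared independently.
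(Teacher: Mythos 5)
Your proof is correct and follows essentially the route the paper intends: the paper offers no separate proof of this theorem (it is absent from the proofs collected in~\S\ref{sec:stated1p1}), treating it as immediate from the separable formula $\lambda_1(\Omega(\mathbf{a}))=\sum_{j=1}^N \mu(a_j)^2$ of equation~(\ref{eq:lambda1Rect}) together with the one-dimensional fact that $\mu$ is decreasing (Lemma~\ref{lem:rlem1}, or equation~(\ref{eq:dmu2dc})), which is exactly your argument. Your extra care in reducing the containment $\Omega_1\subseteq\Omega_2$ to the componentwise inequality $a_j^{(1)}\le a_j^{(2)}$ via translation invariance and coordinate projections is a worthwhile detail that the paper leaves implicit.
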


The same GA-convexity of $\mu_{(2)}$ as in Theorem~\ref{thm:thm3ArXivV2} Restated leads to the following:

 \begin{theorem}\label{thm:thm3MIA}
The fundamental eigenvalue $\lambda_1(\Omega({\mathbf a}))=\sum_{j=1}^N \mu({a_j})^2$
as given in~\S\ref{sec:explicit} equation~(\ref{eq:lambda1Rect}),  is a convex function of the $\ell_j=\log({a_j})$.\\
Equivalently, with ${\mathbf a}(t)=(a_j(0)^{1-t} a_j(1)^t)$, then $\lambda_1({\mathbf a}(t))$
satisfies the inequality
$$\lambda_1({\mathbf a}(t))
\le (1-t)\lambda_1({\mathbf a}(0) + t\lambda_1({\mathbf a}(1))
\ \ {\it  for} \ \  0<t<1 . $$
\end{theorem}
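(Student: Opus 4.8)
The plan is to obtain the convexity of $\lambda_1$ in the variables $\ell_j=\log(a_j)$ as an immediate consequence of the GA-convexity of $\mu_{(2)}$, which is the content of Theorem~\ref{thm:thm3ArXivV2} Restated and is available from~\citeKeconv. Recall that a function $f$ is GA-convex (the $(0,1)$ case of Definition~\ref{def:pq}) precisely when $t\mapsto f(\exp(t))$ is convex: indeed $(0,1)$-convexity asks that $x\mapsto x f'(x)$ be increasing, which is exactly $(f(\exp(t)))''\ge 0$. For our $\mu_{(2)}$ this GA-convexity is itself inexpensive: $\phi_2$ is completely monotone, hence log-convex (AG-convex), and $\mu_{(2)}=\phi_2^{-1}$, so the $(p,q)\to(q,p)$ inversion of Theorem~\ref{thm:thmpqqp} with $(p,q)=(1,0)$ turns the AG-convexity of $\phi_2$ into GA-convexity of $\mu_{(2)}$.

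Granting this, the argument is short. First I would write the eigenvalue in its separable form
$$\lambda_1(\Omega({\mathbf a}))=\sum_{j=1}^N\mu(a_j)^2=\sum_{j=1}^N\mu_{(2)}(a_j),$$
and pass to the logarithmic variables by setting $a_j=\exp(\ell_j)$, so that
$$\lambda_1=\sum_{j=1}^N\mu_{(2)}(\exp(\ell_j))=:F(\ell_1,\dots,\ell_N).$$
By the GA-convexity of $\mu_{(2)}$ each summand $\ell_j\mapsto\mu_{(2)}(\exp(\ell_j))$ is convex in its single argument; regarded as a function on $\RR^N$ it depends only on the coordinate $\ell_j$ and is therefore convex on $\RR^N$. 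A finite sum of convex functions is convex, so $F$ is convex, which is the first assertion.

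The stated interpolation inequality is then just convexity of $F$ read along a line segment. With ${\mathbf a}(t)=(a_j(0)^{1-t}a_j(1)^t)$ one has
$$\ell_j(t)=\log(a_j(t))=(1-t)\log(a_j(0))+t\log(a_j(1)),$$
so the geometric-mean interpolation of the side lengths is precisely the straight-line interpolation of the $\ell_j$. Applying the definition of convexity of $F$ to this segment gives
$$\lambda_1({\mathbf a}(t))=F(\ell_1(t),\dots,\ell_N(t))\le(1-t)\lambda_1({\mathbf a}(0))+t\lambda_1({\mathbf a}(1))\qquad(0<t<1),$$
as claimed. I expect no real obstacle here: all of the analytic work is concentrated in the GA-convexity of $\mu_{(2)}$ (carried out in~\citeKeconv via the sign analysis of $Q_2(\phi_2,x)$, or, more cheaply, via Theorem~\ref{thm:thmpqqp} as above), after which this theorem follows from the separable structure of $\lambda_1$ together with the elementary stability of convexity under coordinatewise embedding and summation. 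The only points worth verifying explicitly are the equivalence of GA-convexity with convexity of $\ell\mapsto\mu_{(2)}(\exp(\ell))$ and the identification of geometric interpolation of the $a_j$ with linear interpolation of the $\ell_j$.
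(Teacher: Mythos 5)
Your proposal is correct and follows essentially the same route as the paper: the paper's own proof is the one-line observation that the result ``follows from the GA-convexity of $\mu$ or of $\mu_{(2)}$'' (with that GA-convexity imported from Lemma~\ref{lem:rlem1}, i.e.\ from Part~I via Theorem~\ref{thm:thmpqqp} applied to the AG-convex Stieltjes function $\phi_2$). You have merely written out the routine steps the paper leaves implicit --- the equivalence of GA-convexity with convexity of $\ell\mapsto\mu_{(2)}(\exp(\ell))$, closure of convexity under coordinatewise embedding and summation, and the identification of geometric interpolation of the $a_j$ with linear interpolation of the $\ell_j$ --- so there is nothing to correct.
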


As mentioned above, when $\beta=0$. the $\mu({a_j})=\pi/(2 a_j)$.
Once again the theorem is trivial to prove when $\beta=0$ as then
\begin{equation}\lambda_1(\beta=0)
= \frac{\pi^2}{4} \sum_{j=1}^N \exp(-2\ell_j) .
\label{eq:lambda1beta0lj}
\end{equation}
The proof when $\beta>0$ is given in~\S\ref{subsec:thm3MIA}.
Amongst the several different ways to prove Theorem~\ref{thm:thm3ArXivV2} is one in which two of
the ingredients are (i) Theorem~\ref{thm:thm3MIA} and (ii) that
taking logs transforms the equal volume constraint from a product one involving the $a_j$
to a sum involving the $\ell_j$.

Some of the behaviour as one scales the domain is given in the following:

\begin{theorem}\label{thm:thmScale}
(i) Let $\Omega_1$ be a~\boxshape.
Define $\Omega_t=t\Omega_1$.
Then $\lambda_1(\Omega_t)$ is  decreasing in $t$, convex in $t$ and
further, AG-convex and HA-convex.\\
(ii) For $\beta=0$, for any $\Omega_1$, not merely for  \boxshape{es}
$ \lambda_1(\Omega_t)$ is completely monotone.
\end{theorem}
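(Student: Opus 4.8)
The plan is to reduce part~(i) entirely to the one-variable properties of $\mu_{(2)}$ already established in~\citeKeconv, and to settle part~(ii) by the classical scaling law for the Dirichlet Laplacian. First I would record that the half-widths of $\Omega_t=t\Omega_1$ are $t a_j$, so the eigenvalue formula gives
\begin{equation}
\lambda_1(\Omega_t)=\sum_{j=1}^N \mu(t a_j)^2=\sum_{j=1}^N \mu_{(2)}(t a_j).
\label{eq:lamscale}
\end{equation}
From~\citeKeconv the function $\mu_{(2)}$ is positive and decreasing and is both AG-convex and HA-convex (hence, through the implication chains, also GA-convex and ordinarily convex).

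For part~(i) the two facts I would invoke are elementary but carry the bookkeeping. The first is that each generalised-convexity notion in play is invariant under scaling of the argument by a fixed $a_j>0$: if $\mu_{(2)}$ is AA-, AG-, GA- or HA-convex then so is $t\mapsto\mu_{(2)}(a_j t)$, since $t\mapsto a_j t$ is affine in the $t$-variable (covering AA and AG), a shift in the $\log t$-variable (covering GA), and an affine change of variable in the reciprocal variable (covering HA, where the decrease of $\mu_{(2)}$ also supplies the required monotonicity). The second fact is that each of these notions is closed under addition: AA, GA and HA are the $q=1$ cases in the classification of~\citeKeconv and so form convex cones, while the AG (log-convex) functions form a cone as well. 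Applying both facts to~(\ref{eq:lamscale}), every summand $\mu_{(2)}(a_j t)$ is decreasing, AG-convex and HA-convex in $t$, and the finite sum inherits all three; ordinary convexity then follows from either AG- or HA-convexity. This establishes part~(i).

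For part~(ii) I would use only the dilation behaviour of the Dirichlet Laplacian. Under $x=ty$ one has $\Delta_x=t^{-2}\Delta_y$, so if $u$ is the fundamental Dirichlet eigenfunction on $\Omega_1$ with eigenvalue $\lambda_1(\Omega_1)$ then $x\mapsto u(x/t)$ is the fundamental eigenfunction on $\Omega_t$ with eigenvalue $t^{-2}\lambda_1(\Omega_1)$, whence
$$\lambda_1(\Omega_t)=\frac{\lambda_1(\Omega_1)}{t^2}$$
for every bounded domain $\Omega_1$, not merely for {\boxshape}es. Since $t\mapsto t^{-2}$ is completely monotone on $(0,\infty)$ and a positive multiple of a $\CM$ function is $\CM$, $\lambda_1(\Omega_t)$ is completely monotone.

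The substantive input sits in~\citeKeconv rather than here: it is the HA-convexity of $\mu_{(2)}$ (equivalently, the concavity of $1/\mu$), which is among the most delicate of the generalised-convexity properties verified there, so that is where I expect the real difficulty to lie. I would also emphasise what makes part~(ii) special and why it is confined to $\beta=0$: the clean relation $\lambda_1(\Omega_t)=t^{-2}\lambda_1(\Omega_1)$ holds precisely because the Dirichlet condition is scale-free, whereas for $\beta>0$ dilation effectively rescales $\beta$, the eigenvalue is no longer a pure power of $t$, and one is left with only the weaker convexity conclusions of part~(i).
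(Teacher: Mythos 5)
Your proposal is correct and follows essentially the same route as the paper: part (i) reduces to the properties of $\mu_{(2)}$ from Lemma~\ref{lem:rlem1} together with closure of the relevant convexity classes under addition (you merely make explicit the scaling-invariance step for the summands $\mu_{(2)}(ta_j)$, which the paper leaves implicit), and part (ii) is the paper's observation that $\lambda_1(\Omega_t)=\lambda_1(\Omega_1)/t^2$ with $t^{-2}$ completely monotone.
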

Item (i) follows from the corresponding properties for $\mu_{(2)}$ given in Lemma~\ref{lem:rlem1}.
By taking $\Omega_0=\{0\}$ one may see some similar results, concerning $\mu$ rather than  $\mu_{(2)}$.\\
Item (ii) is trivial as it is merely the statement that $1/t^2$ is completely monotone on $t>0$.
We emphasiset, this statement concerning changes of scale is, when $\beta=0$ true for any domain $\Omega$.
We have included it here 
to assist in exposition in~\S\ref{sec:Discussion}.

When $\beta=0$ there are many further results for convex domains.
Consider the Minkowski sum, defining a $\Omega_t=(1-t)\Omega_0+t\Omega_1$.
Then, when $\beta=0$,  $\lambda_1(\Omega_t)^{-1/2}$ is a concave function of $t$.
Equation (5) of ~\cite{Col05} gives
\begin{equation}
\lambda_1(\Omega_t)^{-1/2}
\ge (1-t)\lambda_1(\Omega_0)^{-1/2} + t \lambda_1(\Omega_1)^{-1/2}\qquad
{\rm when\ \ }\ \beta=0 .
\label{in:BrunnMin}
\end{equation}
While we do not know if there is any result, when $\beta>0$ for general convex domains,
for \boxshape{es} and $\beta\ge{0}$ we have a result.
Our \boxshape{es} centred on the origin behave nicely under Minkowski sums.
The set of all  our \boxshape{es} centred at the origin is closed under Minkowski sums:
$$\Omega(t):= \Omega((1-t){\mathbf a} + t{\mathbf b})= (1-t)\Omega({\mathbf a})+t\Omega({\mathbf b}) . $$
(That when $\beta=0$,  $\lambda_1(\Omega_t)^{-1/2}$ is a concave function of $t$ is easily checked for our \boxshape{es},
When $\beta=0$ using formula~(\ref{eq:lambda1beta0}), one verifies
inequality~(\ref{in:BrunnMin}) by using the $r=-2$ form of the Minkowski inequality
 as given in~\cite{HLP34}, \S2.11, p30.)
If the property does extend to general convex domains and $\beta>0$, some of the ingredients of the $\beta=0$
proof, e.g.
homogeneity of domain functionals, are lost when we fix $\beta>0$.

\begin{theorem}\label{thm:thmBorell}
Use the notation above for the Minkowski sum of \boxshape{es} and let the fundamental eigenvalue $\lambda_1(\Omega(t))$
be as given in~\S\ref{sec:explicit} equation~(\ref{eq:lambda1Rect}).
For all $\beta\ge{0}$, $\lambda_1(\Omega(t))^{-1/2}$ a concave function of $t$. That is
$$ \frac{1}{\sqrt{\lambda_1(\Omega(t))}}
\ge
 \frac{1-t}{\sqrt{\lambda_1(\Omega(0))}}+  \frac{t}{\sqrt{\lambda_1(\Omega(1))}}
 \ \ {\it  for} \ \  0\le t\le 1 .
$$
\end{theorem}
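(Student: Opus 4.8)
The plan is to exhibit $\lambda_1(\Omega(t))^{-1/2}$ as the composition of a concave, coordinatewise non-decreasing gauge with concave scalar functions of $t$, and then invoke the elementary fact that such a composition is concave. First I would record the coordinate description of the Minkowski sum: writing $c_j(t)=(1-t)a_j+tb_j$, each half-width is \emph{affine} in $t$, and by equation~(\ref{eq:lambda1Rect}),
$$\lambda_1(\Omega(t))=\sum_{j=1}^N \mu(c_j(t))^2 .$$
The key substitution is to pass to the reciprocals $w_j(t):=1/\mu(c_j(t))>0$, so that
$$\lambda_1(\Omega(t))^{-1/2}=\left(\sum_{j=1}^N w_j(t)^{-2}\right)^{-1/2}=:M_{-2}\bigl(w_1(t),\dots,w_N(t)\bigr).$$
This reduces the whole problem to understanding the outer gauge $M_{-2}$ and the inner functions $w_j$.

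Second, I would assemble two facts. (a) Each $w_j$ is concave in $t$: equation~(\ref{eq:halfConc}) gives that $1/\mu$ is concave as a function of $c$ (for $\beta=0$ it is the linear function $(2/\pi)c$, for $\beta>0$ it is strictly concave), and concavity is preserved under precomposition with the affine map $t\mapsto c_j(t)$. (b) On the positive orthant the gauge $M_{-2}(\mathbf{w})=(\sum_j w_j^{-2})^{-1/2}$ is both concave and non-decreasing in each argument: monotonicity is immediate, since increasing any $w_j$ decreases $w_j^{-2}$ and hence increases $M_{-2}$, while concavity is exactly the $r=-2$ form of Minkowski's inequality from~\cite{HLP34}, \S2.11, p30 (superadditivity together with positive homogeneity of degree one yields concavity). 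This is the same inequality that delivers the $\beta=0$ result~(\ref{in:BrunnMin}); here it is applied not to $\mathbf{c}$ directly but to the reciprocals $\mathbf{w}$.

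Finally, I would combine (a) and (b) through the composition principle: if $\Phi$ is concave and non-decreasing in each coordinate and $w_1,\dots,w_N$ are concave, then $t\mapsto\Phi(w_1(t),\dots,w_N(t))$ is concave, since $w_j(t)\ge(1-s)w_j(t_0)+s\,w_j(t_1)$ componentwise, monotonicity of $\Phi$ lets one replace the argument, and concavity of $\Phi$ finishes. Applying this with $\Phi=M_{-2}$ yields the concavity of $t\mapsto\lambda_1(\Omega(t))^{-1/2}$, which is the stated inequality. The main obstacle is not analytic---the hard concavity estimate is already in hand as~(\ref{eq:halfConc})---but structural: one must pass to the variables $w_j=1/\mu(c_j)$ precisely so that the inner functions are concave \emph{and} the outer gauge is increasing, since working directly with the convex functions $\mu(c_j)$ would not align the monotonicity needed for the composition step. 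The remaining care is to note $c_j(t)>0$ and $w_j(t)>0$ on $[0,1]$, so that everything stays in the positive orthant where $M_{-2}$ is concave.
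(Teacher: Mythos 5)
Your proof is correct, and it takes a genuinely different route from the paper's. Both arguments rest on the same one-dimensional input---the concavity of $1/\mu$, i.e.\ equation~(\ref{eq:halfConc}) (equivalently the AH-convexity of $\mu$ recorded in Lemma~\ref{lem:rlem1})---but they handle the summation over coordinates differently. The paper invokes Lemma~\ref{lem:lemBorell1} (a special case of Lindberg's results on power-convex functions~\cite{Li82}): the class of positive $C^2$ functions $f$ with $f^{\alpha}$ concave, $-1<\alpha<0$, is closed under addition, proved by showing that the matrix $f\,D^2f+(\alpha-1)\,Df\,(Df)^{T}$ remains positive semidefinite under sums; applying this with $\alpha=-1/2$ to the coordinate functions $\mu_{(2)}(a_j)$ gives concavity of $\Lambda(\mathbf{a})^{-1/2}$ on the whole positive orthant, and restriction to the segment $(1-t)\mathbf{a}+t\mathbf{b}$ finishes. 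You instead factor $\lambda_1^{-1/2}$ through the gauge $M_{-2}$, obtain concavity of that gauge from superadditivity (the $r=-2$ Minkowski inequality of~\cite{HLP34}) plus positive homogeneity, and close with the standard rule that a concave, coordinatewise non-decreasing function of concave functions is concave. Your route is more elementary and global: beyond the one-dimensional fact~(\ref{eq:halfConc}) it needs no differentiability (the paper itself remarks that its proof of Lemma~\ref{lem:lemBorell1} requires $f$ and $g$ to be $C^2$), it avoids the induction over $N$ implicit in the two-function lemma, and it makes transparent that the $\beta>0$ theorem runs on exactly the Minkowski-inequality mechanism the paper already cites when verifying the $\beta=0$ case~(\ref{in:BrunnMin}), with concavity of $1/\mu$ as the only new ingredient. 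What the paper's hessian lemma buys in exchange is generality: it applies to sums of arbitrary power-concave functions on a convex domain, not only to separable, coordinatewise ones, and it states the orthant-wide concavity of $\Lambda^{-1/2}$ as a result in its own right---although your composition argument, applied with the affine maps $\mathbf{a}\mapsto a_j$ in place of $t\mapsto c_j(t)$, recovers that statement too.
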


When $N=2$, Corollary~\ref{cor:corPerim} to Theorem~\ref{thm:thm3ArXivV2} follows from
Theorem~\ref{thm:thmBorell} and the observation that
if the rectangles $\Omega_0$ and $\Omega_1$, $\Omega_1$ being obtained by rotating $\Omega_0$ through
a right angle, then the
perimeter of $\Omega_t$ is constant in $t$.
Symmetry suggests that the maximum of the concave function $\lambda_1(\Omega(t))^{-1/2}$
will occur at $t=1/2$, i.e. the square.

For the next theorem, though results in $N$ dimensions are available, for ease of exposition, results presented here
and  in  subsection~\S\ref{subsec:LS} are,
unless otherwise indicated, for $N=2$.
Part (ii) of the theorem is not proved in this paper: see~\cite{LS11}.
The theorem presents  interesting results involving, as well as $\lambda_1(\Omega)$,
the polar moment of inertia about the centroid, $I_c(\Omega)$.
For our rectangles $(-r h,rh)\times{(-h/r,h/r)}$,the  area is $4h^2$, and
$$ I_c(r) = \frac{4}{3} h^4\left( r^2 +\frac{1}{r^2}\right)
\qquad{\rm while}\qquad
\lambda_1(\beta=0)=\frac{\pi^2}{4 h^2} \left( r^2 +\frac{1}{r^2}\right) . $$

\begin{theorem}{\label{thm:thmLS}}
Define
$${\cal R}(\Omega):= \frac{\lambda_1(\Omega) |\Omega|^3}{I_c(\Omega)} .$$
(i)  ${\cal R}(\Omega)$
is maximal among rectangles for the square.\\
(ii)  ${\cal R}(\Omega)$
is maximal among
triangles for the equilateral triangle, maximal among parallelograms for the square, and
maximal among ellipses for the disk.
\end{theorem}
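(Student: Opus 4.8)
The plan is to prove only part~(i); part~(ii) is attributed to~\cite{LS11} and is not established here. First I would make the functional explicit for $N=2$. For the rectangle with half-sides $a_1,a_2$ one has $|\Omega|=4a_1a_2$, the stated $I_c=\frac43 a_1a_2(a_1^2+a_2^2)$, and, from~\S\ref{sec:explicit}, $\lambda_1=\mu(a_1)^2+\mu(a_2)^2=\mu_{(2)}(a_1)+\mu_{(2)}(a_2)$. Hence $|\Omega|^3/I_c=48\,a_1^2a_2^2/(a_1^2+a_2^2)$ and
\[
{\cal R}(\Omega)=48\,\bigl(\mu_{(2)}(a_1)+\mu_{(2)}(a_2)\bigr)\,\frac{a_1^2a_2^2}{a_1^2+a_2^2}.
\]
Since ${\cal R}$ is not scale invariant once $\beta>0$ is fixed, the comparison is among rectangles of a common area, the square of that area being $\Omega^*$. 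As $a_1^2a_2^2$ is then fixed, it suffices to show that $a_1=a_2$ maximises $(\mu_{(2)}(a_1)+\mu_{(2)}(a_2))/(a_1^2+a_2^2)$. Writing $\ell_j=\log a_j$, $P(\ell)=\mu_{(2)}(e^\ell)$, and $\ell_1=\ell_0+s$, $\ell_2=\ell_0-s$ with $2\ell_0=\log(a_1a_2)$ fixed, and using $a_1^2+a_2^2=2e^{2\ell_0}\cosh(2s)$, this reduces to the single inequality
\[
P(\ell_0+s)+P(\ell_0-s)\ \le\ 2\cosh(2s)\,P(\ell_0)\qquad(s\in\RR).
\]

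To prove this I would use two properties of $\mu_{(2)}$, both recorded in Part~I. The first is that $\mu_{(2)}$ is GG-concave, i.e.\ $V(\ell):=\log P(\ell)=\log\mu_{(2)}(e^\ell)$ is concave in $\ell$; this is the entry established for $\mu_{(2)}$ by the sign analysis of the appendix of~\citeKeconv (equivalently, it is inherited from the GG-concavity of the Stieltjes function $\phi_2$ through Theorem~\ref{thm:thmpqqp}, since $\mu_{(2)}=\phi_2^{-1}$). The second is the derivative bound $|V'(\ell)|<2$. Here $V'<0$ because $\mu_{(2)}$ is decreasing, while $V'>-2$ is equivalent to $a^2\mu_{(2)}(a)=(a\mu(a))^2$ being increasing in $a$; setting $X=a\mu(a)$, the defining relation $\mu\tan(a\mu)=1/\beta$ reads $X\tan X=a/\beta$, and since $X\mapsto X\tan X$ is strictly increasing from $0$ to $\infty$ on $(0,\pi/2)$, the root $X(a)$, hence $(a\mu(a))^2$, increases with $a$. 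Thus $|V'|<2$ throughout, with $V'\equiv-2$ only in the Dirichlet limit $\beta=0$, where $\mu_{(2)}(a)=\pi^2/(4a^2)$.

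With these in hand the finish is a supporting-line argument. Concavity of $V$ gives, at the base point $\ell_0$, $V(\ell_0\pm s)\le V(\ell_0)\pm V'(\ell_0)s$; exponentiating and adding yields
\[
P(\ell_0+s)+P(\ell_0-s)\le 2P(\ell_0)\cosh\!\bigl(V'(\ell_0)s\bigr).
\]
Because $|V'(\ell_0)|<2$ and $\cosh$ is even and increasing on $[0,\infty)$, $\cosh(V'(\ell_0)s)\le\cosh(2s)$, which is exactly the required inequality; tracing it back gives ${\cal R}(\Omega)\le{\cal R}(\Omega^*)$, with equality iff $a_1=a_2$.

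The hard part is not the final estimate but securing its two inputs with the right constants. The matching is delicate: the threshold $|V'|\le2$ is precisely the bound needed to replace $\cosh(V'(\ell_0)s)$ by $\cosh(2s)$, and it is saturated exactly in the Dirichlet case, which is why ${\cal R}$ is constant over rectangles when $\beta=0$ and only the $\beta>0$ correction makes the square strictly optimal. The genuinely substantial ingredient is the GG-concavity of $\mu_{(2)}$; the bound $|V'|<2$, by contrast, is the elementary pair of monotonicity facts above. I would also remark that the argument is intrinsically two-dimensional (the exponents in ${\cal R}$ are tuned to $N=2$), so no attempt is made to extend it to general $N$ here.
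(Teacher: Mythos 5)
Your proof is correct, and both of its inputs are legitimately available in the paper (GG-concavity of $\mu_{(2)}$ is recorded as Lemma~\ref{lem:rlem1GG}, and the monotonicity of $X\tan X$ on $(0,\pi/2)$ is elementary) --- but it takes a genuinely different route from the paper's. The paper proves part (i) variationally (\S\ref{subsec:variational} and \S\ref{subsec:LS}): inserting the test function $v=\cos(\nu_X x)\cos(\nu_Y y)$ with $\nu_X=\nu_Y=\sqrt{\lambda_{1\square}/2}$, built from the square's exact eigenfunction, into the Rayleigh quotient ${\cal V}_\beta(\Omega(r),v)$ yields inequality~(\ref{eq:VxVymuSq}) and hence the weaker~(\ref{eq:VxVymuSqSimple}), $\lambda_1(\Omega(r))\le\tfrac{\lambda_\square}{2}(r^2+r^{-2})$, which at fixed area is exactly ${\cal R}(\Omega(r))\le{\cal R}(\Omega_\square)$; part (ii) is delegated to~\cite{LS11}, just as you do. Observe that your target inequality $P(\ell_0+s)+P(\ell_0-s)\le 2\cosh(2s)P(\ell_0)$ is~(\ref{eq:VxVymuSqSimple}) itself in the variables $r=e^{s}$, $\lambda_\square=2\mu_{(2)}(h)$; what differs is the derivation. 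You replace the explicit Rayleigh-quotient computation (which the paper itself describes as lengthy though routine) by the Part I convexity machinery: the supporting line for $V=\log\mu_{(2)}(e^\ell)$ coming from GG-concavity, together with the bound $|V'|\le 2$, which you correctly reduce to $(a\mu(a))^2$ being increasing via $X\tan X=a/\beta$. This is shorter, stays within the paper's preferred philosophy of reading eigenvalue inequalities off generalized convexity properties of $\mu_{(2)}$ rather than doing variational calculations, and makes the Dirichlet degeneration transparent ($V'\equiv-2$, so ${\cal R}\equiv 12\pi^2$ is constant over rectangles, consistent with~\cite{PoS51}). What the paper's route buys in exchange is the quantitatively stronger inequality~(\ref{eq:VxVymuSq}), whose explicit negative $\beta$-dependent term exhibits the strictness for $\beta>0$ and $r\ne 1$, and a technique (test functions transported between domains) that points toward parallelograms, where your argument, being tied to the separable exact solution for rectangles, cannot go. Both treatments compare shapes at fixed area, since ${\cal R}$ fails to be scale invariant once $\beta>0$ is fixed, so your framing of the statement agrees with the paper's.
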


If we choose to consider families of shapes with the same volume, the results
involve the ratio ${\lambda_1(\Omega)}/{I_c(\Omega)}$.

This theorem (and more) is proved in~\cite{LS11}.
Our inequality~(\ref{eq:VxVymuSqSimple}) establishes the $\beta>0$ version of Theorem~\ref{thm:thmLS}(i):
see~\S\ref{subsec:LS}.

Some of the history is given in~\cite{La11}.
In~\cite{PoS51} it is noted that, when $\beta=0$,  ${\cal R}=12\pi^2$ is constant for rectangles,
which, however, renders the $\beta=0$ case of Theorem~\ref{thm:thmLS}(i) somewhat trivial.
Tables of ${\cal R}(\Omega)$ for various plane shapes , and $\beta=0$, are given in~\cite{PoS51} p257.
Parallelograms are considered in~\cite{Po52}.
Also Hersch~\cite{He66}, equation (5),  establishes the result
for $\beta=0$ and $N=2$, and states it as
{\it Of all parallelograms having given distances between their parallel sides, the rectangle has the
largest $\lambda_1$.}
A variational proof of the result, with $N=2$ and $\beta=0$ is straightforward.

There are some subtleties when $\beta$ is nonzero.
We will not be proving (ii) in this paper: see~\cite{LS11}.

\subsection{Steiner symmetrization applied when $\beta=0$}\label{subsec:Steiner}

Let $\Omega$ be a bounded simply connected domain and $u$ a non-negative function vanishing on
the boundary $\partial\Omega$.
Steiner symmetrization (defined in~\cite{PoS51}) was introduced by Steiner in order to prove the
classical geometric inequality
$$C_\odot(N) |\Omega|^{N-1}\le |\partial\Omega|^N
\qquad{\rm with\ \ }
C_\odot = N^{N-1} \frac{2\pi}{\Gamma(N/2)}  . $$
Here $C_\odot$ is the value of ${|\partial\Omega|^N}/{|\Omega|^{N-1}}$ taken when $\Omega$ is a ball, and the result can be approached
by various symmetrizations,
If Steiner symmetrization is used, one needs many succeessive Steiner symmetrizations
(about planes through the centroid) and to take limits.
In most physical applications $N=2$ or $N=3$,
Steiner symmetrization can also be used, with care, for some polygonal or polytope domains.
Steiner symmetrization changes a polygon into a polygon, but
typically increases the number of sides.  (For a use, see~\S\ref{subsec:famousPS}.)

The classical reference concerning Steiner symmetrization defined for functions and it uses
is the book~\cite{PoS51}.
One of the topics there  concerns the fundamental Dirichlet eigenvalue, i.e. $\beta=0$.
Let $v$ be any (sufficiently smooth) non-negative function defined on $\Omega$
and vanishing on the boundary.
Denote Steiner symmetrizations with a superscript $s$.
Steiner symmetrization preserves the $L_2$ norm of $v$:
$$ \int_{\Omega^s} (v^{(s)})^2 = \int_\Omega v^2 . $$
Steiner symmetrization reduces the Dirichlet `energy':
$$\int_{\Omega^s} |{\rm grad}(v^{(s)})|^2
\le \int_\Omega |{\rm grad}(v)|^2 . $$
The consequences of this for the fundamental Dirichlet eigenvalue date back nearly a century,
to papers by Faber and Krein, but with~\cite{PoS51} as a classical account.
Define
$${\cal V}_0(\Omega,v):= \frac{\int_\Omega |{\rm grad}(v)|^2|}{\int_\Omega v^2} . $$
Then, with $u$ the fundamental eigenfunction for $\Omega$,
$$\lambda_1(\Omega^s)
= \min {\cal V}_0(\Omega^s,v)
\le{\cal V}_0(\Omega^s,u^{(s)})
\le {\cal V}_0(\Omega,u)
=\lambda_1(\Omega) . $$
Repeated Steiner symmetrization can be used to establish, over domains $\Omega$
with the same volume, that the ball has the least fundamental eigenvalue.

Symmetrization techniques are not directly applicable when $\beta>0$, and in this paper we
restrict our study to rectangular parallelepipieds, here called {\boxshape}es.

\subsection{A famous question of Polya and Szego~\cite{PoS51}}\label{subsec:famousPS}

There is a famous question of Polya and Szego~\cite{PoS51}, p159.
{\it Amongst all $n$-gons of given area, does the regular $n$-gon have the smallest $\lambda_1$?}
Symmetrization techniques are used, when $\beta=0$ and $N=2$ in~\cite{PoS51} pp158-159
to establish this to the case when $n=3$ and $n=4$ (but the answer is unknown for $n>4$).
Our Theorem~\ref{thm:thm3ArXivV2} concerns $\beta>0$ and, at least is not inconsistent with the
possibility that, as is the case with $\beta=0$,
the square is the optimizer over all quadrilaterals with the given area.

Consider next $N\ge{2}$.  Symmetrization can be applied when $\beta=0$ and $N\ge{2}$ considering all
parallelepipeds: see~\cite{PoS51} p159 item (d).
Item (d) states that for any prism (right or oblique) with a given volume and a quadrilateral base,
there is a set of successive Steiner symmetrizations which will transform the prism to a cube.

\section{The explicit formulae for $\lambda_1$ for a \boxshape}\label{sec:explicit}

The function $u=\prod_{j=1}^N \cos(\mu({a_j}) x_j)$ satisfies
\begin{equation}
 \Delta u + \lambda u = 0 ,\qquad {\rm with}\
\lambda = \sum_{j=1}^N \mu({a_j})^2  .
\label{eq:lambda1Rect}
\end{equation}
(Here we have chosen to look for modes which are symmetric about the axes, which is appropriate for the fundamental mode.
In other applications, this need not be appropriate.
The general setting is given in~\cite{GN13}\S3.1.)
The Robin boundary conditions are satisfied if, for all $j$,
\begin{equation}
\mu({a_j})\tan(a_j \mu({a_j}) )=\frac{1}{\beta} .
 \label{eq:muXmuY}
 \end{equation}

The function $\mu(c)$ and the geometry determine $\lambda_1$:
Another organization of the equation is
  \begin{equation}
  \mu(c) \tan( c\mu(c))=\frac{1}{\beta},\qquad
  {\mbox{\rm or equivalently}}\qquad
{\hat\mu} \tan({\hat c}{\hat\mu})=1,
 \qquad{\mbox{\rm where\ \ }} {\hat\mu}=\beta\mu,\ {\hat c}=\frac{c}{\beta} \  .
 \label{eq:rrectmuchat}
 \end{equation}
 The transcendental equations have been widely studied, e.g.~\cite{BS73,MRS03,LWH15}.
 Numerical values, often used for checks, are given in Table 4.20 of~\cite{AS65}.
Amongst the various applications are (i) the energy spectrum for the
one-dimensional quantum mechanical finite square well,
and (though with $c<0$)
(ii) (though with $c<0$) zeros of the spherical Bessel function $y_1(x)=j_{-2}(x)$.

 We have an interest in the smallest positive solutions,
\begin{displaymath}0<\mu({a_j})<\pi/(2 a_j),\qquad   0<{\hat \mu}<\pi/(2 {\hat c}) .
\end{displaymath}

\section{The one-dimensional problem}{\label{sec:Oned}}

Determining properties of $\mu(c)$ directly involves much implicit differentiation and is somewhat ugly that way.
However $\mu$ has a simple inverse function $\phi_1$, defined in equation~(\ref{eq:rphiarctan}):
$$\frac{c}{\beta} = \phi_1(\beta\mu) . $$
It is overwhelmingly easier to obtain properties of the inverse $\phi_1$ (and of $\phi_2(z)=\phi_1(\sqrt{z})$)
and from these deduce properties of $\mu$ using results from~\citeKeconv.
The main results relevant to this paper, and proved in~\citeKeconv are given in the next two Lemmas.
The diagrams in~\S{5} of~\citeKeconv help summarise the information given in the next Lemma.

\begin{lemma}
\label{lem:rlem1}{\it Let
\begin{equation}
\phi_1(z)=\frac{1}{z} \, \arctan\left( \frac{1}{z} \right) ,\qquad
\phi_2(z)=\frac{1}{\sqrt{z}} \, \arctan\left( \frac{1}{\sqrt{z}} \right)  .
\label{eq:rphiarctan}
\end{equation}
Then both $\phi_1(z)$ and $\phi_2(z)$ are positive, monotonic decreasing, convex functions on $0<z<\infty$.
Both $\phi_1$ and $\phi_2$ are  log-convex (AG-convex), and, even stronger, both are completely monotone.
\begin{description}
  \item[] $\phi_2$ is a Stieltjes function so AH-convex (as well as AG-convex) while HA-concave.
  \item[] $\phi_1$ is HA-convex, AG-convex and AH-concave.
  \item[] The inverse of $\phi_1$ is $\mu$ and  $1/\mu$ is concave (i.e. $\mu$ is AH-convex and hence, also, $\log(\mu)$ is convex,
$\mu$ is AG-convex).  $\mu$ is aso HA-concave.
  \item[] The inverse of $\phi_2$ is $\mu_{(2)}$ which is   log-convex -- AG-convex, and
HA-convex (hence also GA-convex, i.e.
is such that $\mu_{(2)}(\exp(\ell))$ is convex in $\ell$).
$\mu_{(2)}$ is also AH-concave.
\end{description}
}
\end{lemma}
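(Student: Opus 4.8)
The plan is to assemble this omnibus statement from four ingredients, three of which are already available as theorems earlier in the document and the fourth of which is a direct computation. The organising principle is that the "easy" convexity assertions cascade from complete monotonicity and from the Stieltjes property, while a handful of genuinely exceptional directions must be checked by hand; the properties of the inverses $\mu$ and $\mu_{(2)}$ are then obtained mechanically by transposing the $(p,q)$-labels through the inversion theorem.

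First I would pin down complete monotonicity. By Theorem~\ref{lem:lem1} both $\phi_1$ and $\phi_2$ are completely monotone and $\phi_2$ is Stieltjes; these follow from the Laplace representation of $\phi_1$ with kernel ${\rm Si}$ and from the Stieltjes representation of $\phi_2$. Complete monotonicity at once yields that each $\phi_i$ is positive, decreasing, convex and log-convex, i.e. AG-convex (hence AA-convex), settling the top row of both diagrams. Next I would feed the Stieltjes property of $\phi_2$ into the Stieltjes machinery: Theorem~\ref{thm:StieltjesRecip} gives AH-convexity (since $1/\phi_2\in\CBF$ is concave), Theorem~\ref{thm:Stieltjes1ox} gives HA-concavity, and Theorem~\ref{thm:StieltjesHH} gives HG- and HH-concavity. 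This disposes of every Stieltjes-automatic corner of the $\phi_2$ diagram. I would stress that $\phi_1(z)=\phi_2(z^2)$ is only completely monotone and not Stieltjes, so its behaviour in the AH and HA directions is genuinely opposite and is not supplied by this machinery.

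The main obstacle is the third step: establishing by direct computation exactly those properties that follow neither from complete monotonicity nor from being Stieltjes, namely the GA-convexity, GG-concavity and HA-concavity of $\phi_2$, together with the AH-concavity and HA-convexity of $\phi_1$. The method is to compute $D(\phi_i(x),p,q)$ as in~(\ref{eq:Ddef}), factor out the manifestly positive prefactor (of the form $x^{-1-p}\phi_i^{q-2}$ divided by a positive polynomial), and reduce the sign question to that of a quadratic $Q_i(\Phi,x)$ in $\Phi=\phi_i(x)$, whose positivity or negativity is precisely Definition~\ref{def:pq}'s criterion for $(p,q)$-convexity or concavity. The delicate point is that $\phi_i(x)$ must be trapped between tight rational bounds — for $\phi_2$ the interval $[3/(1+3x),1/x]$, coming from $\arctan(x)<x$ and the sharper $\arctan(x)>x/(1+x^2/3)$, and for $\phi_1$ the interval $[3/(1+3x^2),1/x^2]$ — and one must then verify that $Q_i$ keeps a constant sign across the whole admissible $\Phi$-interval by evaluating it at the two endpoints and examining its (at most quadratic) dependence on $\Phi$ in between. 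The GA and HA cases are exactly those where the crude estimate $\phi_2>1/(1+x)$ fails and the sharper lower bound is indispensable; essentially all the labour of the proof sits here.

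Finally I would transfer everything to the inverses using Theorem~\ref{thm:thmpqqp} and its concave counterpart: since $\phi_1$ and $\phi_2$ are positive, decreasing and convex, the inverse of a $(p,q)$-convex (concave) function is $(q,p)$-convex (concave). Transposing $(p,q)$ merely swaps the two mean-labels, so each entry of the $\phi_1$ table becomes the correspondingly letter-reversed entry for $\mu$, and likewise $\phi_2$ maps to $\mu_{(2)}$. Concretely, $\phi_1$ HA-convex yields $\mu$ AH-convex, hence $1/\mu$ concave and then $\mu$ AG-convex by the inclusion AH$\Rightarrow$AG; $\phi_1$ AH-concave yields $\mu$ HA-concave; $\phi_2$ AH-convex yields $\mu_{(2)}$ HA-convex and hence GA-convex; $\phi_2$ HA-concave yields $\mu_{(2)}$ AH-concave; and the log-convexity (AG-convexity) of $\mu_{(2)}$ follows either from $\phi_2$ GA-convexity or, equivalently, from $\mu_{(2)}=\mu^2$ together with the log-convexity of $\mu$. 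This reproduces precisely the properties listed in the statement.
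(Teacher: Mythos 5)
Your proposal is correct and is essentially the paper's own proof: complete monotonicity and the Stieltjes property of $\phi_2$ via the integral representations (Theorem~\ref{lem:lem1}), the Stieltjes machinery of Theorems~\ref{thm:StieltjesRecip}, \ref{thm:Stieltjes1ox} and~\ref{thm:StieltjesHH}, the appendix computation reducing each remaining $(p,q)$-case to the constant sign of the quadratic $Q_i(\Phi_i,x)$ over the interval $[3/(1+3x),1/x]$ (resp.\ $[3/(1+3x^2),1/x^2]$), and the transfer to $\mu$ and $\mu_{(2)}$ by the letter-swapping Theorem~\ref{thm:thmpqqp}. The only slip is in your aside about where the labour lies: by the paper's appendix the crude bound $\phi_2>1/(1+x)$ suffices for every case except AH-convexity (GA-convexity needs no lower bound at all, since $Q_2=\Phi_2\left((1+x)^2\Phi_2+3x+1\right)$ is manifestly positive), so it is the AH case, not the GA/HA cases, where the sharper bound $3/(1+3x)$ is indispensable---this does not affect your argument, which prescribes the sharp interval throughout.
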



The preceding lemma is required for the main results of this paper.
Definitions are given systematically in ~\citeKeconv, but sometimes, as immediately below, repeated in this paper.
Further results follow from additional properties of $\mu$, e.g.
Theorem~\ref{thm:GG} follows from properties in Lemma~\ref{lem:rlem1GG}.

{\sc Definition}\
A function $f:(0,\infty)\rightarrow{(0,\infty)}$ is said to be {\it GG-convex} iff
$$ f(\sqrt{x_0 \, x_1}) \le \sqrt{f(x_0) \, f(x_1)}
 .$$
With the inequality reversed it is {\it GG-concave}.

\medskip

\begin{lemma}
\label{lem:rlem1GG}{\it  The functions $\phi_2$ and $\mu_{(2)}$
(and $\phi_1$ and $\mu$) are GG-concave.}
\end{lemma}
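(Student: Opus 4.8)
The plan is to reduce everything to a single sign inequality for $\phi_2$ and then transport GG-concavity to the other three functions by soft arguments. Recall that GG-concavity is exactly the $(0,0)$ case of $(p,q)$-concavity, so a positive function $f$ is GG-concave iff $x\mapsto x\,f'(x)/f(x)$ is decreasing, equivalently iff $\log f(\exp t)$ is concave in $t$. I would stress at the outset that, unlike HG- and HH-concavity (which follow from the HA-concavity already recorded in Lemma~\ref{lem:rlem1} via the implication diagram), GG-concavity is \emph{not} forced by the other entries of that diagram: $\phi_2$ is GA- and AG-\emph{convex}, so no downward or leftward arrow delivers GG-concavity. Hence one genuine computation is unavoidable, and this is where the real (though modest) work lies.

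First I would establish that $\phi_2$ is GG-concave. Specialising the appendix identity of~\citeKeconv to $(p,q)=(0,0)$ gives
\[
\frac{d}{dx}\Bigl(x\,\frac{\phi_2'(x)}{\phi_2(x)}\Bigr)
= \frac{x^{-1}\phi_2^{-2}}{4(1+x)^2}\,Q_2(\phi_2,x),
\qquad Q_2(\phi_2,x)=x\phi_2-\phi_2-1 .
\]
Since the prefactor is positive, GG-concavity is equivalent to $Q_2(\phi_2(x),x)<0$. From $\arctan(u)<u$ one has $\phi_2(x)=\arctan(1/\sqrt{x})/\sqrt{x}<1/x$, so $x\phi_2<1$ and therefore $Q_2=x\phi_2-\phi_2-1<-\phi_2<0$. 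This settles $\phi_2$. The same computation with $Q_1(\phi_1,x)=x^2\phi_1-\phi_1-1$ and the bound $\phi_1(x)<1/x^2$ gives $\phi_1$ GG-concave; alternatively, since $\phi_1(z)=\phi_2(z^2)$ and the inner substitution $z\mapsto z^2$ merely rescales the argument in the logarithmic (G) variable, GG-concavity of $\phi_2$ transfers to $\phi_1$ automatically.

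Next I would pass to the inverses $\mu=\phi_1^{-1}$ and $\mu_{(2)}=\phi_2^{-1}$. By Lemma~\ref{lem:rlem1} both $\phi_1$ and $\phi_2$ are positive, decreasing and convex (indeed completely monotone), so the hypotheses of Theorem~\ref{thm:thmpqqp}, in its concave form, are met. The decisive observation is that the pair $(p,q)=(0,0)$ is fixed under the duality $(p,q)\mapsto(q,p)$: thus GG-concavity is self-dual, and the theorem turns GG-concavity of $\phi_2$ into GG-concavity of $\mu_{(2)}$, and GG-concavity of $\phi_1$ into GG-concavity of $\mu$. As an internal check, $\mu_{(2)}=\mu^2$ gives $\log\mu_{(2)}(\exp t)=2\log\mu(\exp t)$, so $\mu$ and $\mu_{(2)}$ are GG-concave together, consistently with the two separate conclusions.

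I expect the main obstacle to be conceptual rather than computational: one must recognise that GG-concavity is a genuinely new property, not deducible from the diagram in Lemma~\ref{lem:rlem1}, so the sign computation for $\phi_2$ (hence $\phi_1$) cannot be bypassed. Once the single inequality $x\phi_2<1$ is in hand the analysis is routine; the only care needed is to verify the positivity, monotonicity and convexity hypotheses of Theorem~\ref{thm:thmpqqp} and to exploit the self-duality of the $(0,0)$ case, so that the inverse inherits \emph{concavity} rather than convexity.
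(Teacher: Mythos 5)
Your proposal is correct and follows essentially the same route as the paper: GG-concavity of $\phi_2$ and $\phi_1$ is established exactly as in the appendix of~\citeKeconv, via the sign of $Q_2=x\Phi_2-\Phi_2-1$ (resp.\ $Q_1=x^2\Phi_1-\Phi_1-1$) using the bound $\phi_2<1/x$ (resp.\ $\phi_1<1/x^2$), and the inverses $\mu_{(2)}$ and $\mu$ then inherit GG-concavity from the concave form of Theorem~\ref{thm:thmpqqp} together with the self-duality of the pair $(p,q)=(0,0)$ under $(p,q)\mapsto(q,p)$. Your added observations — that GG-concavity is not forced by the implication diagram (the paper itself notes a GG-convex Stieltjes function, $1/\log(1+x)$), and that $\phi_1(z)=\phi_2(z^2)$ transfers GG-concavity by an affine change in the logarithmic variable — are correct but are refinements of, not departures from, the paper's argument.
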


\section{$\lambda_1$ for {\boxshape}es with $|\Omega|$ fixed}\label{sec:VolFixed}

The principal topic of this section is proofs, plural, of Theorem~\ref{thm:thm3ArXivV2}.
We begin by noting that
\begin{equation}
g \ {\mbox{\rm is GA-convex\ }}\
\Longleftrightarrow
g\left( \left(\prod_{j=1}^N a_j\right)^{1/N}\right)
\le \frac{1}{N}\sum_{j=1}^N g(a_j) .
\label{eq:gGA}
\end{equation}
As noted in Lemma~\ref{lem:rlem1},
that $\mu_{(2)}$ is GA-convex is a consequence of the log-concavity, AG-concavity, of
the Stieltjes function $\phi_2$.
The first proof begins from noting that this theorem, as re-stated in~\S\ref{subsec:Overview}, is merely that
$\mu_{(2)}$ is GA-convex, and replacing the $g$ of the preceding equation by  $\mu_{(2)}$
is the result.

\medskip

An earlier paper of the authors,~\cite{KW16s}, involves isoperimetric results
for a physical problem for which only $N=2$ is relevant.
It was appropriate in this earlier paper to provide a self-contained, short
proof of the $N=2$ version of Theorem~\ref{thm:thm3ArXivV2}.
The proof we give in~\S\ref{subsec:thm3} is an adaptation of the proof in the earlier paper,
and, essentially, is another proof that for a positive, decreasing, convex function $f$ which is
AG-convex, its inverse $g$ is GA-convex.
The proof in ~\S\ref{subsec:thm3} uses Lemma~\ref{lem:lem2} from the next subsection.

\subsection{Separable convex optimization problems}\label{subsec:SeparableC}

In the paragraph preceding Theorem~\ref{thm:thm3MIA} in \S\ref{sec:rIntro} we mentioned that
properties of $\mu$ beyond that of it being log-convex are required in order to prove Theorem~\ref{thm:thm3ArXivV2}.
Here is an example illustrating that.

{\sc Example.} For the problem of minimizing $\lambda(a_1,a_2)=\exp(-a_1)+\exp(-a_2)$ subject to
the positive numbers $a_1$, $a_2$ satisfying $a_1 a_2=h^2>0$, the minimum need not occur on the
line $a_1=h=a_2$.

By writing the constraint as $a_1=r h$ and $a_2=h/r$, we note that the objective function can be studied as
a function of the single variable $r$, and is $L(r,h)=\lambda(r h,h/r)=\exp(-r h)+\exp(-h/r)$.
The critical point of $L(.,h)$ at $r=1$ is  a local minimum if $h>1$ and a local maximum if  $h<1$.
As $L(r,h)\rightarrow{1}$ as $\rightarrow{0}$ and $L(1,h)>1$ for $h<\log(2)$,
we see that, as asserted,
the minimum does not occur on the line  $a_1=h=a_2$.

\begin{lemma}
\label{lem:lem2}{\it Let $\zeta:(0,\infty)\rightarrow{\mathbf R}$ be twice continuously differentiable,
decreasing and convex.
Then the separable convex programming problem, given $A>0$,
\begin{displaymath}{\it to\ find\ \ } \{a_j | 1\le{j}\le{N} \} \qquad
{\mbox{\it which minimize}} \ \ \sum_{j=1}^N a_j
\end{displaymath}
and satisfy the constraint
\begin{equation} \sum_{j=1}^N \zeta(a_j)=  N\zeta(A) ,
\label{nu:zetaDef}
\end{equation}
is solved by $a_j=A$ for all $j$.}
\end{lemma}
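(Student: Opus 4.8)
The plan is to bypass any optimization machinery and instead prove a clean global lower bound, $\sum_{j=1}^N a_j \ge NA$, valid at every feasible point, and then simply observe that the diagonal point $a_j = A$ attains it. The single structural fact I intend to use about $\zeta$ is the supporting tangent line at $A$: since $\zeta$ is convex and differentiable, for every $x>0$
$$\zeta(x) \ge \zeta(A) + \zeta'(A)\,(x - A).$$
Everything else follows from the sign of $\zeta'(A)$ and the constraint.

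First I would apply this inequality at each feasible coordinate $x = a_j$ and sum over $j$, giving
$$\sum_{j=1}^N \zeta(a_j) \ge N\,\zeta(A) + \zeta'(A)\left(\sum_{j=1}^N a_j - NA\right).$$
Next I would insert the constraint $\sum_{j=1}^N \zeta(a_j) = N\zeta(A)$ into the left-hand side, cancelling the $N\zeta(A)$ terms and leaving
$$0 \ge \zeta'(A)\left(\sum_{j=1}^N a_j - NA\right).$$
Because $\zeta$ is decreasing we have $\zeta'(A) < 0$, so dividing by $\zeta'(A)$ reverses the inequality and yields $\sum_{j=1}^N a_j \ge NA$. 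Finally, $a_j = A$ for all $j$ is feasible (the constraint holds trivially there) and produces $\sum_j a_j = NA$, so the bound is attained; hence $a_j = A$ solves the minimization problem. The two hypotheses that are genuinely used are exactly convexity of $\zeta$ (for the supporting line) and $\zeta'(A)<0$ (to flip the inequality), which is what I would emphasise.

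The same conclusion can be reached through a Karush--Kuhn--Tucker analysis, which I would mention only as an alternative: minimizing the linear objective over the convex sublevel set $\{\,a : \sum_j \zeta(a_j) \le N\zeta(A)\,\}$ has stationarity condition $1 + \lambda\,\zeta'(a_j) = 0$, forcing $\zeta'(a_j)$ to equal the common negative value $-1/\lambda$; the diagonal point is consistent with this, and by convexity of the relaxed problem it is globally optimal. I would nonetheless keep the tangent-line version in the text, since it exhibits the optimizer explicitly rather than implicitly.

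The only point needing care --- the main obstacle, such as it is --- is the strictness $\zeta'(A) < 0$. If ``decreasing'' were read as merely non-increasing one could have $\zeta'(A) = 0$, and then convexity together with $\zeta' \le 0$ would force $\zeta$ to be constant on $[A,\infty)$, contradicting genuine monotone decrease; so I would fix the convention that $\zeta$ is strictly decreasing, as it is in every intended application (there $\zeta = \log\phi_2$, whence $\zeta' = \phi_2'/\phi_2 < 0$ since $\phi_2>0$ is strictly decreasing). With that convention the argument is complete, and uniqueness of the minimizer follows whenever $\zeta$ is in addition strictly convex, because then $\zeta'$ is injective and $\zeta'(a_j) = \zeta'(A)$ forces $a_j = A$ for all $j$.
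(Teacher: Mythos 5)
Your proof is correct, but it takes a different route from the paper's. The paper applies the discrete Jensen inequality directly: convexity of $\zeta$ gives $\zeta\bigl(\frac{1}{N}\sum_j a_j\bigr)\le\frac{1}{N}\sum_j\zeta(a_j)=\zeta(A)$, and then the \emph{decreasing} property of $\zeta$ is used at the level of function values to flip this into $\frac{1}{N}\sum_j a_j\ge A$; attainment at the diagonal point finishes the argument. You instead linearize at the single point $A$ via the supporting tangent line, substitute the constraint, and flip using the sign of $\zeta'(A)$. The two mechanisms differ in what they ask of $\zeta$: the paper's version never uses differentiability (the $C^2$ hypothesis is idle there, so the lemma holds for any decreasing convex $\zeta$), whereas yours uses $\zeta'(A)<0$ and so must rule out $\zeta'(A)=0$ --- which you do correctly, noting that convexity plus $\zeta'\le 0$ and $\zeta'(A)=0$ would force $\zeta$ to be constant on $[A,\infty)$. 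What your version buys in exchange is that the tangent-line inequality summed over $j$ is exactly the Lagrange-multiplier certificate of optimality (as your KKT remark makes explicit), so it exposes the convex-duality structure that the Jensen argument leaves implicit; it also isolates precisely which hypotheses do the work. Note that the paper's proof needs strict decrease for its flip just as yours needs $\zeta'(A)<0$, so your caveat about the reading of ``decreasing'' applies equally to both arguments.
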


\begin{proof}
The convexity of $\zeta$ gives (as in~\cite{HLP34}\S3.6 p71)
\begin{equation} \zeta\left(\frac{\sum_{j=1}^N a_j}{N}\right)
\le \frac{\sum_{j=1}^N \zeta(a_j)}{N}=\zeta(A) .
\label{nu:zetaCvx}
\end{equation}
(This is a particular case of the finite, or discrete, form of Jensen's inequality.)
But $\zeta$ is decreasing so
\begin{equation}
\frac{\sum_{j=1}^N a_j}{N} \ge A , \qquad \sum_{j=1}^N a_j \ge N\, A .
\label{nu:XYZend}
\end{equation}
The lower bound of $NA$ on the sum $\sum_{j=1}^N a_j$ is attained when
$a_j=A$ for all $j$.\hfill\qed
\end{proof}

The hypotheses in the following lemma are greater than needed, but we have written it in a way
which invites its application building from Theorem~\ref{thm:thm3MIA}.

\begin{lemma}\label{lem:lemMSepConv}
{\it Let $M:{\mathbf R}\rightarrow(0,\infty)$ be twice continuously differentiable,
decreasing and convex.
Then the separable convex programming problem, given a real number $L$,
\begin{displaymath}{\it to\ find\ \ } \{\ell_j | 1\le{j}\le{N} \} \qquad
{\mbox{\it which minimize}} \ \ \sum_{j=1}^N M(\ell_j)
\end{displaymath}
and satisfy the constraint
\begin{equation} \sum_{j=1}^N \ell_j =  N L ,
\label{nu:Mconstraint}
\end{equation}
is solved by $\ell_j=L$ for all $j$.
Using strictness in the decreasing and convex, the minimizer is unique.}
\end{lemma}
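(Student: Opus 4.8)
The plan is to mirror the proof of Lemma~\ref{lem:lem2}, but in the ``dual'' configuration: here the objective $\sum_{j=1}^N M(\ell_j)$ is itself the convex quantity while the constraint $\sum_{j=1}^N \ell_j = NL$ is linear, so Jensen's inequality can be applied directly to the objective rather than being used to bound the constraint. Consequently the monotonicity (decreasing) hypothesis plays no role in the argument and is retained only for the convenient application building from Theorem~\ref{thm:thm3MIA}, as already flagged in the remark preceding the statement; convexity alone suffices.

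First I would invoke the finite (discrete) form of Jensen's inequality for the convex function $M$, exactly as in inequality~(\ref{nu:zetaCvx}):
$$ M\!\left(\frac{1}{N}\sum_{j=1}^N \ell_j\right) \le \frac{1}{N}\sum_{j=1}^N M(\ell_j) . $$
Next I would substitute the constraint~(\ref{nu:Mconstraint}), namely $\tfrac{1}{N}\sum_{j=1}^N \ell_j = L$, into the left-hand side to obtain
$$ M(L) \le \frac{1}{N}\sum_{j=1}^N M(\ell_j), \qquad{\rm equivalently}\qquad \sum_{j=1}^N M(\ell_j) \ge N\,M(L) . $$
The lower bound $N\,M(L)$ is feasible and is attained by the choice $\ell_j = L$ for every $j$, since that choice satisfies the constraint; hence it is the minimum value, which establishes the first assertion.

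For the uniqueness claim I would appeal to strict convexity of $M$: Jensen's inequality is then strict unless all the $\ell_j$ coincide, so any minimizer must satisfy $\ell_1 = \cdots = \ell_N$, and the linear constraint forces this common value to equal $L$. There is no genuine obstacle here -- the entire content is Jensen's inequality -- and the only point worth emphasising is the contrast with Lemma~\ref{lem:lem2}, where decreasingness was essential in order to convert a Jensen bound on the constraint into a bound on the objective, whereas in the present lemma the decreasing hypothesis is strictly redundant.
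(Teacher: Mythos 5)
Your proposal is correct and is essentially identical to the paper's own proof: both apply the discrete Jensen inequality directly to the convex objective, substitute the linear constraint to get $\sum_{j=1}^N M(\ell_j)\ge N\,M(L)$, note attainment at $\ell_j=L$, and obtain uniqueness from strictness of convexity. Your added observation that the decreasing hypothesis is redundant here (unlike in Lemma~\ref{lem:lem2}) is accurate and consistent with the paper's remark that the hypotheses are stated more strongly than needed.
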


Once again the proof depends on Jensen's inequality.
We omit the details.

\begin{remark}\label{rem:expl}
Suppose functions $M$ and $m$ are related as $M(\ell)=m(\exp(\ell))$
or equivalently, for $z>0$, as $m(z)=M(\log(z))$.
Then if $M$ is decreasing and convex, so is $m$.
\end{remark}
Differentiating gives
$$\frac{d m}{d z} = \frac{1}{z} M'(\log(z)) ,$$
and
$$\frac{d^2 m}{d z^2} =- \frac{1}{z^2} M'(\log(z))+\frac{1}{z^2} M''(\log(z)) .$$
The result in the remark follows.
The remark can also be expressed: if $m$ is AG=convex, it is also AA-convex.

The converse of Remark~\ref{rem:expl} is false as, for example, if $m(z)=\exp(-z)$ we have that $m$ is convex
but $M(\ell)=m(\exp(\ell))=\exp(-\exp(\ell))$ is not convex.

\subsection{Another proof of the isoperimetric inequality of Theorem~\ref{thm:thm3ArXivV2}}\label{subsec:thm3}

In the approach here all $\mu({a_j})$ are specified, and these determine $h$ and $\mu=\mu_\square$.

\begin{proof} (of Theorem~\ref{thm:thm3ArXivV2}).\
In this second proof, we begin by noting
that, at fixed $\beta$ and $h$, $\mu(r h)$ is positive and monotonic decreasing in $r$.
Let us now suppose we are given $\beta$, and all the $\mu({a_j})$.
Write
\begin{equation}
\beta\mu({a_j}) = {\hat\mu}_{a_j},\qquad \beta \mu_\square = {\hat\mu}_\square .
\label{nu:beta3Def}
\end{equation}
The goal is to show
\begin{equation}
 \sum_{j=1}^N \mu({a_j})^2 \ge N \mu_\square^2 = \lambda_{1\square} ,
\label{nu:nuGoal}
\end{equation}
or equivalently
\begin{equation}
 \sum_{j=1}^N {\hat\mu}_{a_j}^2 \ge N {\hat\mu}_\square^2 .
\label{eq:goalB}
\end{equation}
Now, in the notation of equation~(\ref{eq:rphiarctan})
\begin{equation}
\frac{a_j}{\beta} = \phi_1({\hat\mu}_{a_j})= \phi_2({\hat\mu}_{a_j}^2) ,\quad
\frac{h}{\beta} = \phi_1({\hat\mu}_\square) = \phi_2({\hat\mu}_\square^2)  .
\label{nu:phihr}
\end{equation}
Now, with $\beta$ given,  $h$ is determined by the values of $\mu({a_j})$:
\begin{equation}
 \left(\frac{h}{\beta}\right)^N
= \prod_{j=1}^N \phi_1({\hat\mu}_{a_j})= \prod_{j=1}^N \phi_2({\hat\mu}_{a_j}^2).
\label{nu:phi2Prod}
\end{equation}
Also
\begin{equation} \left(\frac{h}{\beta}\right)^N
= \phi_1({\hat\mu}_\square)^N = \phi_2({\hat\mu}_\square^2)^N.
\label{nu:phisquare}
\end{equation}
Eliminating $h/\beta$ between the two preceding equations we have
\begin{equation}
\prod_{j=1}^N \phi_1({\hat\mu}_{a_j}))= \phi_1({\hat\mu}_\square )^N ,\qquad
\prod_{j=1}^N \phi_2({\hat\mu}_{a_j}^2)= \phi_2({\hat\mu}_\square^2 )^N .
 \label{nu:phiElim}
\end{equation}

 We now apply Lemma~\ref{lem:lem2} with $\zeta(z)=\log(\phi_2(z))$ and note that Lemma~\ref{lem:rlem1}
 ensures that $\phi_2$ is decreasing and  log-convex so that
 the conditions on $\zeta$, needed for Lemma~\ref{lem:lem2} are satisfied.
 This establishes the result.\hfill\qed
 \end{proof}

The stronger inequality, $\sum_{j=1}^N{\hat\mu}_{a_j} >  N{\hat\mu}_\square$ ,  follows from considering $\phi_1$:
see~\cite{KW16s}.

\subsection{Simplifications and further results when $N=2$ and $|\Omega|$ fixed}\label{subsec:FurtherN2}

Define, for $r>0$, $\Omega(rh,h/r)=(-r h,r h)\times(-h/r,h/r)$. Write $\lambda_1(rh,h/r)=\lambda_1(\Omega(rh,h/r))$.
When the orientaion of the rectangle is unimportant, abbreviate these to $\Omega(r)$ and $\lambda_1(r)$.
Clearly $\lambda_1(1/r)=\lambda_1(r)$.

Theorem~\ref{thm:thm3ArXivV2} when $N=2$, that  $\mu_{(2)}$ is GA-convex, is
that, for $r>0$,
$$ \mu_{(2)}(h)= \mu_{(2)}(\sqrt{ rh\frac{h}{r}})
\le \frac{\mu_{(2)}(rh)+\mu_{(2)}(\frac{h}{r})}{2} .
$$

Theorem~\ref{thm:thm3MIA}, another result from $\mu_{(2)}$ being GA-convex, gives that
$\lambda_1$ is a convex function of $\log(r)$, and, clearly, an even function of $\log(r)$.
So, again, we see that the minimum of $\lambda_1(r)$ is at $r=1$ as stated in Theorem~\ref{thm:thm3ArXivV2}.

The slightly stronger result that $2\mu(h)\le{\mu(rh)+\mu(h/r)}$ follows from $\phi_1$
being AG convex, so $\mu$ is GA-convex.
This is
the $N=2$, the inequality $\sum_{j=1}^N{\hat\mu}_{a_j} >  N{\hat\mu}_\square$ from the end of the
preceding subsection: it trivially rewrites to
$$\lambda_1\ge \lambda_{1\square} +\frac{1}{2}\left(\mu(rh)-\mu(h/r)\right)^2
 .
$$

\medskip
HA-convexity is a stronger result than GA-convexity, and yields the following,
a result that was originally conjectured from the plots of $\lambda_1(r)$ given in~\cite{KW16s}.

\begin{theorem}\label{thm:HA}
$\lambda_1(r)$ is a convex function of $r$.
\end{theorem}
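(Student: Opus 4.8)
The plan is to use the explicit decomposition $\lambda_1(r)=\mu_{(2)}(rh)+\mu_{(2)}(h/r)$, which comes from equation~(\ref{eq:lambda1Rect}) together with $\mu_{(2)}=\mu^2$ and the geometry $\Omega(rh,h/r)$, and then to prove convexity of the two summands separately. The key observation driving the plan is that the two summands call for two \emph{different} generalized-convexity properties of $\mu_{(2)}$, and that both of these are already recorded in Lemma~\ref{lem:rlem1}.

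First I would dispose of the term $r\mapsto\mu_{(2)}(rh)$. By Lemma~\ref{lem:rlem1}, $\mu_{(2)}$ is log-convex, hence AG-convex, hence AA-convex (ordinarily convex). Since $r\mapsto rh$ is affine with positive slope and the composition of a convex function with an affine map is convex, $\mu_{(2)}(rh)$ is a convex function of $r$. Next I would treat the reflected term $r\mapsto\mu_{(2)}(h/r)$. Here I invoke that $\mu_{(2)}$ is HA-convex, which by the $(p,q)=(-1,1)$ case of the definition in~\citeKeconv means exactly that $t\mapsto\mu_{(2)}(1/t)$ is convex. Writing $h/r=1/(r/h)$ and setting $\tau=r/h$, we have $\mu_{(2)}(h/r)=\mu_{(2)}(1/\tau)$, which is convex in $\tau$; as $\tau=r/h$ is affine in $r$ with positive slope, $\mu_{(2)}(h/r)$ is convex in $r$. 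Adding the two convex summands shows $\lambda_1(r)$ is convex, which is the claim. As a sanity check, the even symmetry $\lambda_1(1/r)=\lambda_1(r)$ is consistent with a convex function having its minimum at $r=1$, matching Theorem~\ref{thm:thm3ArXivV2}.

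There is essentially no computational obstacle; the only genuine content is recognizing which property governs which summand, and this identification is the step I would flag as the conceptual heart of the argument. In particular, GA-convexity of $\mu_{(2)}$ — which already suffices for Theorems~\ref{thm:thm3ArXivV2} and~\ref{thm:thm3MIA} — is too weak to yield convexity in $r$ itself, since GA-convexity only controls $\mu_{(2)}(\exp(\ell))$ as a function of $\ell=\log r$, whereas the $h/r$ summand genuinely requires the harmonic-argument statement, namely HA-convexity. The proof is thus a clean illustration of the strict implication $\mathrm{HA}\Rightarrow\mathrm{GA}$ from the diagrams of~\citeKeconv: convexity together with GA-convexity alone would not deliver the convexity of $\lambda_1(r)$ asserted here.
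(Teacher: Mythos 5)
Your proof is correct and follows essentially the same route as the paper: the paper likewise writes $\lambda_1(r)=\mu_{(2)}(r)+\mu_{(2)}(1/r)$ and concludes convexity of the first summand from ordinary convexity of $\mu_{(2)}$ and of the second from its HA-convexity, both taken from Lemma~\ref{lem:rlem1}. Your only addition is keeping the scale factor $h$ explicit and noting the affine substitution, which the paper suppresses as ``obvious notation.''
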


\begin{proof}
The positive, decreasing, convex function $\mu_{(2)}$ is HA-convex, so $\mu_{(2)}(1/c)$ is convex in $c$.
In an obvious notation $\lambda_1(r)=\mu_{(2)}(r)+\mu_{(2)}(1/r)$, and, as both functions on the right
are convex, $\lambda_1(r)$ is convex.
\end{proof}

We remark, but do not use, that the convex function $\mu$ is HA-concave, so the function
$\mu(r)-\mu(1/r)$ is convex.

\medskip

Additional results can be found from further properties of $\phi$ and/or of $\mu$,
e.g. $\mu$ -- and $\mu_{(2)}$ --  are $GG$-concave.

Suppose now that the rectangle $\Omega(rh,h/r)$ has $0<r\le{1}$.
We now need to consider the orientation of the rectangle, so
now denote its fundamental Robin eigenvalue by $\lambda_1(rh,h/r)$.
Consider the largest square which can be inscribed in the rectangle
(the square  $\Omega(hr,hr)$and
the smallest square in which the rectangle can be inscribed
(the square  $\Omega(h/r,h/r)$.
The isoperimetric inequality is that  $\lambda_1(h,h)$ is bounded above by
   $\lambda_1 (hr,h/r)$ which is the arithmetic mean of $\lambda_1 (rh,rh)$ and $\lambda_1(h/r,h/r)$.
(This is our old result Theorem~\ref{thm:thm3ArXivV2},)

The  GG-concavity of ($\mu$ and) $\mu_{(2)}$  yields the following:

\begin{theorem}\label{thm:GG}
$\lambda_1(h,h)$ is bounded from below by
  the geometric mean of $\lambda_1 (rh,rh)$ and $\lambda_1(h/r,h/r)$.
\end{theorem}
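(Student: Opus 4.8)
The plan is to reduce the statement directly to the GG-concavity of $\mu_{(2)}$ supplied by Lemma~\ref{lem:rlem1GG}. First I would record the eigenvalues of the three squares in terms of $\mu_{(2)}$. For a square $\Omega(a,a)$ the formula $\lambda_1(\Omega(\mathbf a))=\sum_{j}\mu(a_j)^2$ with both side-parameters equal to $a$ gives $\lambda_1(a,a)=2\mu(a)^2=2\mu_{(2)}(a)$. Hence
$$\lambda_1(h,h)=2\mu_{(2)}(h),\quad \lambda_1(rh,rh)=2\mu_{(2)}(rh),\quad \lambda_1(h/r,h/r)=2\mu_{(2)}(h/r).$$

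Next I would observe the geometric-mean structure of the arguments: since $h=\sqrt{(rh)(h/r)}$, the value $h$ is precisely the geometric mean of $rh$ and $h/r$. Substituting the expressions above, and using $\sqrt{2\mu_{(2)}(rh)\cdot 2\mu_{(2)}(h/r)}=2\sqrt{\mu_{(2)}(rh)\mu_{(2)}(h/r)}$, the claimed bound
$$\lambda_1(h,h)\ge\sqrt{\lambda_1(rh,rh)\,\lambda_1(h/r,h/r)}$$
becomes, after cancelling the common factor $2$,
$$\mu_{(2)}(h)=\mu_{(2)}\bigl(\sqrt{(rh)(h/r)}\bigr)\ge\sqrt{\mu_{(2)}(rh)\,\mu_{(2)}(h/r)}.$$
This last inequality is exactly the defining inequality for GG-concavity with $x_0=rh$ and $x_1=h/r$, and Lemma~\ref{lem:rlem1GG} asserts that $\mu_{(2)}$ is GG-concave, so applying it concludes the argument.

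Since the substantive content — the GG-concavity of $\mu_{(2)}$ — is already established in Lemma~\ref{lem:rlem1GG}, there is no real obstacle here; the only care needed is the bookkeeping of the factor $2$ arising from the two equal side-lengths of each square and the verification that the geometric mean of the rectangle-parameters $rh$ and $h/r$ is indeed $h$. I would also note, in parallel, that the companion GG-concavity of $\mu$ recorded in Lemma~\ref{lem:rlem1GG} gives the analogous lower bound with $\mu$ in place of $\mu_{(2)}$, should it be wanted.
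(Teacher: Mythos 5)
Your proposal is correct and follows essentially the same route as the paper: the paper's proof likewise notes that all three quantities are eigenvalues of squares, writes $\lambda_1(c,c)=2\mu_{(2)}(c)$, and observes that the claimed inequality is then exactly the GG-concavity of $\mu_{(2)}$ from Lemma~\ref{lem:rlem1GG}. Your bookkeeping of the factor $2$ and of $h=\sqrt{(rh)(h/r)}$ just makes explicit what the paper states tersely.
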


The key point in the proof is that each of the three terms concerns a $\lambda_1$ for a square,
and $\lambda_1(c,c)=2\mu_{(2)}$.
Thus the theorem  is merely a restatement of the GG-concavity of $\mu_{(2)}$.

We also state this in terms of the $a_j$ which is suggestive of how it might generalize to $N>2$:
$$\sqrt{\lambda_1(a_1,a_1)\lambda_1(a_2,a_2)}
\le \lambda_1(\sqrt{a_1 a_2},\sqrt{a_1 a_2})
\le \frac{1}{2}\left(\lambda_1(a_1,a_1) + \lambda_1(a_2,a_2) \right) .
$$
When $\beta=0$, the left-hand inequality becomes an equality.

\section{Proofs of Theorems stated in \S\ref{subsec:Othergeom}}\label{sec:stated1p1}

\subsection{Proof of Theorem~\ref{thm:thm3MIA}, convexity in $\log(a_j)$ }\label{subsec:thm3MIA}

\begin{proof} of Theorem~\ref{thm:thm3MIA}.
This follows from the GA-convexity of $\mu$ or of $\mu_{(2)}$. 
\end{proof}

The GA-convexity of $\mu_{(2)}$ establishes both Theorems~\ref{thm:thm3ArXivV2} and~\ref{thm:thm3MIA}.
Furthermore Theorem~\ref{thm:thm3MIA} implies the result of
Theorem~\ref{thm:thm3ArXivV2}.
We illustrate this with $N=2$.
With the earlier notation where $r$ is related to the
aspect ratio of the rectangle (with $r=1$ for the square),
$$\lambda_1=\mu_{(2)}(r) +\mu_{(2)}(\frac{1}{r})
= M(\log(r))+M(-\log(r)) . $$
As each of the $M$ terms is convex, $\lambda_1$ is a convex function of $\log(r)$.
Also  $\lambda_1$ is an even function of $\log(r)$.
Hence the minimum of  $\lambda_1$ occurs at $r=1$, $\log(r)=0$.
For larger $N$,
Lemma~\ref{lem:lemMSepConv} and Theorem~\ref{thm:thm3MIA},
together, yield yet another proof of Theorem~\ref{thm:thm3ArXivV2}.

\medskip
Stronger results follow from using the HA-convexity of $\mu_{(2)}$.
When $N=2$ this is indicated in~\S\ref{subsec:FurtherN2}.

\subsection{Proof of Theorem~\ref{thm:thmScale}, scaling}\label{subsec:Scale}

Part (i) is merely a collection of some of the properties of $\mu_{(2)}$ which are closed under addition.
Part (ii) is a consequence of $\lambda_1(\Omega_t)=\lambda_1(\Omega_1)/t^2.$

\subsection{Proof of Theorem~\ref{thm:thmBorell}, Minkowski sums}\label{subsec:Minkowski}

Consider the Minkowski sum
$$ \Omega_t = (1-t)\Omega_0 + t\Omega_1 . $$
As stated in \S\ref{sec:rIntro}, when $\beta=0$ it is known that $\lambda_1(\Omega_t)^{-1/2}$ is a concave function of $t$.

The result  at $N=1$, that $1/\mu$ is concave,
$\mu$ is AH-convex,  is given in~Lemma~\ref{lem:rlem1}. 

Results more general than the following lemma are proved in~\cite{Li82}.

\begin{lemma}\label{lem:lemBorell1}
 Let the real-valued functions $f$ and $g$ have the same domain of definition $D\subset{R^N}$
with (i) $D$ convex, (ii) $f$ and $g$ positive and twice continuously differentiable in $D$, and
(iii), with $-1<\alpha<0$ both $f^\alpha$ and $g^\alpha$  concave.
Then $(f+g)^\alpha$ is concave.
\end{lemma}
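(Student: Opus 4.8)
The plan is to establish the pointwise defining inequality for concavity of $(f+g)^\alpha$ directly, reducing everything to a single application of Minkowski's inequality in its reversed form for a negative exponent. Although hypothesis (ii) grants $C^2$ smoothness, the argument I have in mind uses only positivity of $f,g$ and the power-concavity assumption (iii); the smoothness is therefore not essential here, though it would permit an alternative proof based on computing the Hessian of $(f+g)^\alpha$.

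First I would fix two points $x_0,x_1\in D$ and a weight $t\in[0,1]$, and set $z=(1-t)x_0+t x_1$, which lies in $D$ by convexity (i). The concavity of $f^\alpha$ and of $g^\alpha$ gives
\begin{equation}
f(z)^\alpha \ge (1-t)f(x_0)^\alpha + t\,f(x_1)^\alpha,\qquad
g(z)^\alpha \ge (1-t)g(x_0)^\alpha + t\,g(x_1)^\alpha.
\label{eq:Borellstart}
\end{equation}
Since $\alpha<0$ the map $s\mapsto s^{1/\alpha}$ is decreasing on $(0,\infty)$, so raising the inequalities in~(\ref{eq:Borellstart}) to the power $1/\alpha$ reverses them and produces upper bounds on $f(z)$ and on $g(z)$. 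Adding these two bounds gives
\begin{equation}
f(z)+g(z)\ \le\
\bigl((1-t)f(x_0)^\alpha + t f(x_1)^\alpha\bigr)^{1/\alpha}
+\bigl((1-t)g(x_0)^\alpha + t g(x_1)^\alpha\bigr)^{1/\alpha}.
\label{eq:Borellmid}
\end{equation}

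Next I would invoke Minkowski's inequality for the exponent $\alpha$. For $\alpha<0$ the weighted power mean is concave and positively homogeneous, hence superadditive on the positive orthant, so Minkowski's inequality reverses: the right-hand side of~(\ref{eq:Borellmid}) is at most $\bigl((1-t)(f(x_0)+g(x_0))^\alpha + t(f(x_1)+g(x_1))^\alpha\bigr)^{1/\alpha}$. Combining this with~(\ref{eq:Borellmid}) and then raising to the power $\alpha$ (again decreasing, hence reversing the inequality once more) yields exactly
$$
(f(z)+g(z))^\alpha \ \ge\ (1-t)\,(f+g)(x_0)^\alpha + t\,(f+g)(x_1)^\alpha,
$$
which is the concavity of $(f+g)^\alpha$, as required.

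The one genuine ingredient, and the step I expect to be the crux, is the reversed Minkowski inequality for a negative exponent; every other move is just bookkeeping with the sign of $\alpha$. I would justify it by citing the power-mean form of Minkowski's inequality for exponent less than $1$ as recorded in~\cite{HLP34} \S2.11 (the same source as the $r=-2$ form of Minkowski used elsewhere in this paper), applied to the two-point weighted mean with weights $1-t$ and $t$. I note finally that this argument in fact goes through for every $\alpha<0$; the restriction $-1<\alpha<0$ in the statement is the range relevant to the application (where $\alpha=-\tfrac12$ delivers the concavity of $\lambda_1^{-1/2}$ needed in Theorem~\ref{thm:thmBorell}) rather than a genuine limitation of the proof.
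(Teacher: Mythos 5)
Your proof is correct, but it takes a genuinely different route from the paper's. The paper argues infinitesimally: it sets $M(f,\alpha)=f\,D^2f+(\alpha-1)\,Df\,(Df)^{T}$, notes that ${\rm hessian}(f^\alpha)=\alpha f^{\alpha-2}M(f,\alpha)$ so that (because $\alpha<0$) concavity of $f^\alpha$ is positive semidefiniteness of $M(f,\alpha)$, and then establishes the claim through the algebraic identity $M(f+g,\alpha)=(f+g)\bigl(\tfrac{1}{f}M(f,\alpha)+\tfrac{1}{g}M(g,\alpha)\bigr)+(1-\alpha)WW^{T}$ with $W=\sqrt{g/f}\,Df-\sqrt{f/g}\,Dg$, each term being positive semidefinite (the last since $\alpha<1$). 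You instead verify the defining concavity inequality pointwise, with the single nontrivial ingredient being the reversed (superadditive) Minkowski inequality for the weighted power mean of exponent $\alpha<0$, which is indeed covered by \cite{HLP34} \S2.11 -- the same source the paper invokes for the $r=-2$ case elsewhere. Each approach buys something. Yours uses no differentiability whatsoever, so it proves a stronger statement than the lemma as written; this is a real gain, since the paper's supplement explicitly concedes that its own proof ``requires $f$ and $g$ to be $C^2$'' and defers the general case to Lindberg \cite{Li82}. Your proof also makes transparent that the restriction $-1<\alpha<0$ is inessential (any $\alpha<0$ works, as you say -- and the paper's Hessian argument shares this feature, needing only $\alpha<1$ for the $WW^{T}$ term), and it extends immediately to sums of more than two functions. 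The paper's identity, in exchange, is self-contained (it imports no form of Minkowski) and localizes exactly where positivity comes from, which would be the natural starting point for an equality analysis. One small caution on your justification of the key step: deducing superadditivity from ``the power mean is concave and homogeneous'' is legitimate, but concavity of the $\alpha$-mean is itself usually proved via Minkowski or H\"older, so to avoid any appearance of circularity it is cleaner to rest on the \cite{HLP34} citation directly, as you also propose to do.
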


\begin{proof}
Denote the column vector gradient with a $D$, and the hessian by $D^2$,
and transpose with a superscript $T$.
Define, for any $f$ which is positive and $C^2$,
$$M(f,\alpha)= f\,  D^2 f +(\alpha-1)\,  Df\,  (Df)^{T}  .$$
Note that
$${\rm hessian}(f^\alpha) =  \alpha\, f^{\alpha-2}\, M(f,\alpha) . $$
To establish the result we need to prove that $M(f+g,\alpha)$ is positive (semi-)definite,
given $M(f,\alpha)$ and $M(g,\alpha)$ both positive semidefinite.
\begin{align*}
M(f+g,\alpha)
&= (f+g)\, D^2 (f+g) + (\alpha-1) D(f+g)\,  (D(f+g))^{T}  ,\\
&= (f+g)\left( \frac{1}{f} M(f,\alpha)+\frac{1}{g} M(g,\alpha)\right) +(1-\alpha) W W^T ,
\end{align*}
with $W=\sqrt{g/f}\, Df -\sqrt{f/g}\, Dg$.
The first two terms of the preceding equation are positive (semi-)definite by the hypotheses
of the lemma, and the last is positive semi-definite.
Hence $M(f+g,\alpha)$  is positive (semi-)definite, completing the proof.
\end{proof}

\begin{proof} (of Theorem~\ref{thm:thmBorell}).
Since $\mu_{(2)}(a)$ is positive, decreasing with $1/\sqrt{\mu_{(2)}(a)}$ concave, we have,
by Lemma~\ref{lem:lemBorell1} with $\alpha=-1/2$,  that
$\Lambda({\mathbf a})=\sum_{j=1}^N \mu_{(2)}(a_j) $ is  such that
$1/\sqrt{\Lambda({\mathbf a})}$ is concave in the (convex) positive orthant,
$\{ {\mathbf a} | a_j>0\}$.
\end{proof}


\subsection{Variational methods}\label{subsec:variational}

It is possible to establish inequalities leading to another proof,
via a variational approach, of Theorem~\ref{thm:thm3ArXivV2}.
As we note below, the calculations are detailed.
Variational methods also yield other results including Theorem~\ref{thm:thmLS}(i).

Let $W^1_2(\Omega)=H_0^1$ be the Sobolev space of functions with square integrable first derivatives.
For domains $\Omega$ with sufficiently smooth boundaries (including our rectangles),
the fundamental eigenvalue is given by
$$\lambda_1
= {{\rm min}\atop{v\in{W^1_2(\Omega)}}}{{\cal V}_\beta}(\Omega,v)\qquad{\rm where\ } \ \
{{\cal V}_\beta}(\Omega,v)= \frac{G(v) + E(v)/\beta}{F(v)} 
$$
and the gradient term $G$, and the terms $F$ and $V$, are
$$ G(v)  = \int_\Omega |\nabla v|^2,
\qquad F(v)  = \int_\Omega v^2,
\qquad E(v)  = \int_{\partial\Omega} v^2.
$$
While the methods are equally applicable in $N$ variables, here we report on the calculations presented in
detail in~\cite{KW16s}, which are for $N=2$ with $x_1$ denoted by $x$ and $x_2$ by $y$.
We used test functions of the form
$$ v = \cos(\nu_X x) \cos(\nu_Y y) ,
$$
for various choices of $\nu$.

With
$\nu_X=\nu_Y=\sqrt{\lambda_{1\square}/2}$ and $\Omega(r)$ a rectangle with $a_1=r h$ and $a_2= h/r$, we find
\begin{equation}
\lambda_1(\Omega(r))\le{\cal V}_\beta(\Omega(r),v)
=\lambda_\square\left(
 \frac{1}{2}\left(
r^2+\frac{1}{r^2}\right) -
\frac{\beta(\sqrt{r}-\frac{1}{\sqrt{r}})^2 (r+1+\frac{1}{r})}
{h(1+\frac{\beta^2\lambda_\square}{2}) +\beta}
\right) .
\label{eq:VxVymuSq}
\end{equation}
This implies the weaker inequality
 \begin{equation}
\lambda_1(\Omega(r))
\le
\frac{\lambda_\square}{2}\left(
r^2+\frac{1}{r^2}\right) ,
\label{eq:VxVymuSqSimple}
\end{equation}
and we remark there is equality in this when $\beta=0$.
\goodbreak

With an appropriate choice of $\nu$ and applying the variational principle over the square
$\Omega_\square$, one can find rather elaborate inequalities which imply the inequality of
Theorem~\ref{thm:thm3ArXivV2}.
In this, as in the derivation of inequality~(\ref{eq:VxVymuSqSimple}),
we use the exact solutions of \S\ref{sec:explicit}.
See~\cite{KW16s}.
The calculations are more tedious than the simple proof we gave in~\S\ref{subsec:thm3}.

\subsection{Results of Laugesen and Siudeja}\label{subsec:LS}


Theorem~\ref{thm:thmLS}(i) is easier to prove when $\beta=0$.
It isgeneralized to Robin boundary conditions (and sums of consecutive eigenvalues) in~\cite{LS11},
and further generalized to higher dimensions in~\cite{LS12}.
We remark that the rectangle version of the result
is the same as our inequality~(\ref{eq:VxVymuSqSimple})
(and we note that we have not considered parallelograms in our variational calculations for $\beta>0$).

Further results are presented in~\cite{FS10,LS15}.

\section{Conclusion --  and open questions}\label{sec:Discussion}

\subsection{The eigenvalue problem}\label{subsec:DiscussEig}

We have, in Theorem~\ref{thm:thm3ArXivV2},  established an isoperimetic inequality for the fundamental Robin eigenvalue
for {\boxshape}es. We have also reviewed some related inequalities.
In the process we found that working with the explictly defined Stieltjes function $\phi_2$ is overwhelmingly neater
than the elementary, but detailed, calculations that arise when working with  $\mu_{(2)}$, the inverse of $\phi$.
Many other inequalities on $\lambda_1$, and related functionals,
for domains more general than {\boxshape}es, have been proved and others have been conjectured:
see, for example, ~\cite{FS10,LS15} and the long arXiv article~\cite{KW16s}.


In~\cite{PoS51} it is asked if,
{\it amongst all $n$-gons of given area, that which has the least $\lambda_1$ is the regular $n$-gon.}
When $\beta=0$ this is, in~\cite{PoS51} page 158, proved to be the case,
using symmetrisation, when $n=3$ and $n=4$.
See earlier in this paper, at the end of \S\ref{subsec:Steiner}.
For $\beta>0$, the question for triangles, $n=3$, is noted as {\it Open Problem 1} in~\cite{LS15}.
Assuming this is found to be true, it would make sense to ask which classes of quadrilaterals have the property that, at given area, the square has the least $\lambda_1$.
Our little result, Theorem~\ref{thm:thm3ArXivV2},  is that it is true for rectangles,
 but it is open as to whether it is true for some larger class, e.g. parallelograms or trapeziums.

Returning to {\boxshape}es we note some open questions.
There are indications, mentioned in~\cite{KW16s} that ${\hat\mu}({\hat c})$ might be completely monotone,
and the first question is, is it, and, if so, how might it be used.
We merely used the log-convexity of $\phi_2$ in our proof of Theorem~\ref{thm:thm3ArXivV2},
and various other convexity properties in other theorems.
However we have the stronger property that $\phi_2$ is a Stieltjes function, so completely monotone, and it is reasonable to ask
what further properties of $\lambda_1(\Omega)$ can be obtained from this.
We have already commented (in Theorem~\ref{thm:thmScale}(ii)) that when $\beta=0$,
under the scale change,  for any $\Omega_1$.
$\lambda_1(t\Omega_1)$ is completely monotone.
It is an open question as to whether this is true when $\beta>0$, even for $\Omega_1$ a rectangle.

It is an open problem to find a very neat  proof of Theorem~\ref{thm:thm3ArXivV2} by variational techniques,
those reported in~\S\ref{subsec:variational} involving lengthy, but routine, calculation, even for $N=2$.
\goodbreak

\subsection{The generalized torsion problem}\label{subsec:DiscussTors}

In the applications in~\cite{MK94,KM93} the problem  P($\beta$) with $\torsforce>0$ and $\lambda=0$,
mentioned in~\S\ref{subsec:Overview}, also arises.
The physically interesting cases are $N=2$ and $N=3$.
A quantity of interest is generalized torsional rigidity, denoted by $S_\beta$ in these papers, and with $N=2$ by
$Q_{\rm steady}$ in~\cite{KW16s}.
The generalization of the St Venant Inequality for the case $\beta>0$ is proved in~\cite{BG15}.
When $N=2$ and $\beta>0$ there is good numerical evidence, reported in~\cite{KW16s},
 suggesting that a result corresponding to our
Theorem~\ref{thm:thm3ArXivV2} should be available for $S_\beta=Q_{\rm steady}$.
See~\cite{PoS51} for the proof for $N=2$ and $\beta=0$.
When $\beta>0$ there is, as yet, only numerical evidence, and no proved result.

When $\beta=0$ results analogous to Theorem~\ref{thm:thmBorell} are available.
See~\cite{Bo85}.
When $\beta=0$ and $N=2$, $S_\beta^{1/4}(t)$ is concave.
We do not know how this might extend to $\beta>0$, even for rectangles --
and there even for simple scale-changes.

\bigskip
\begin{center}
{\Large Supplement to Part II}
\end{center}


\section*{Formerly in~\S\ref{sec:rIntro}}


\medskip
Before considering the case when $\beta>0$, we note that the plane case with $\beta=0$ is reported in~\cite{Po52}.
Tables of ${\cal R}(\Omega)$ for various plane shapes , and $\beta=0$, are given in the
earlier publication~\cite{PoS51} p257.
In particular ${\cal R}(\Omega)$ is the same constant for any rectangle.
Polya's proof is variational, using ${\cal V}_0$ defined in~\S\ref{subsec:Steiner}.
Let $T$ be the linear map, taking the origin to the origin,
 from the square $\Omega_\square=(-h,h)\times{(-h,h)}$ to a parallelogram which,
without loss of generality, may be taken to have one pair of sides parallel to the $x$-axis.
We consider $T$ with $\det(T)=1$, and in calculation use $T$ with matrix representation
$$T =\left[
\begin{array}{cc}
1& m\\
0& 1
\end{array}
\right]
$$
The boundary of the parallelogram is $T(\partial{\Omega_\square})$.
Now
$$ u_\square = \cos(\frac{\pi x}{2 h} ) \cos(\frac{\pi y}{2 h}) $$
is the fundamental  Dirichlet eigenfunction for the square.
With $v$ as the composition $v=u\circ{T^{-1}}$, the function $v$ vanishes on the boundary of the parallelogram.
Routine calculation gives
$$\frac{\lambda_1(T(\Omega))}{I_c(T(\Omega))}
\le
\frac{{\cal V}_0(T(\Omega),v)}{I_c(T(\Omega))}
= \frac{\lambda_{1\square}}{I_c(\Omega_\square)} . $$

Related maple code is presented at the URL given in the abstract.

\subsection*{Eigenfunctions for the Laplacian}\label{subsec:eigLaplace}

Eigenfunction expansions are widely used in linear problems involving Laplacians in the space variables and homogeneous boundary conditions: see~\cite{CHI57}.
\begin{itemize}
\item
Applied to the homogeneous heat equation, the behaviour at large time is dominated by the fundamental eigenvalue.
(In the authors' application to flows in microchannels, in which $N=2$, the explicit infinite series for starting flows
as given in the arXiv supplement form of~\cite{KW16s} is an example.
\item
Eigenfunction expansions are also appropriate for the homogeneous undamped wave equation.
Here, without damping, higher eigenfunctions remain as important as the fundamental.
However if there is any damping the higher eigenfunctions are damped more than the fundamental.
Hence, again, the fundamental dominates at large time.
\item
Eigenfunction expansions can also be used to the non-homogeneous Poisson equation.
One expands the forcing in terms of the eigenfunctions and then determines the coefficients in the expansion for the solution.
\end{itemize}

\section*{Remarks on \S\ref{subsec:SeparableC}}

\begin{proof} of~\ref{lem:lemMSepConv} 
The non-negativity of $M$ ensures that an infimum exists.
As with Lemma~\ref{lem:lem2}, we apply Jensen's inequality.
This gives, from the convexity of $M$,
$$\frac{1}{N}\sum_{j=1}^N M(\ell_j) \ge M(\frac{\sum_{j=1}^N \ell_j}{N}) . $$
Using the constraint~(\ref{nu:Mconstraint}) this is
$$\sum_{j=1}^N M(\ell_j) = N\ M(L) , $$
which is clearly solved by $\ell_j=L$ for all $j$.

Obviously if $M$ were to be a constant function, for example, there would be many solutions to
the optimization problem.
The additional conditions stated give the uniqueness.
\qed
\end{proof}

\section*{Remarks on  \S\ref{subsec:FurtherN2}}

Code relevant to \S\ref{subsec:FurtherN2} is presented at the
URL given in the abstract.


\section*{Remarks on \S\ref{subsec:Minkowski}}

My own proof of Lemma~\ref{lem:lemBorell1} requires $f$ and $g$ to be $C^2$.
Stronger results are in the Lindberg~\cite{Li82} reference. 

\medskip
{The next two lemmas are old bits editted out of the final paper, as neater proofs
were found.}

\begin{lemma}\label{lem:lemBorellc}
If $\mu_2$ is convex and $1/\sqrt{\mu_2}$ is concave on $(0,\infty)$ then so also is the sum
$c+\mu_2$ for any nonnegative constant $c$.
\end{lemma}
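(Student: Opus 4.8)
The plan is to split the statement into its two halves. That $c+\mu_2$ is convex is immediate, since adding a constant to a convex function leaves it convex; so the entire content lies in showing that $1/\sqrt{c+\mu_2}$ is concave for every $c\ge 0$. I would work (as elsewhere in the paper) with positive $C^2$ functions and write $f=\mu_2>0$.

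First I would reduce each hypothesis and the desired conclusion to a pointwise second-derivative inequality. A direct differentiation gives
\[
\frac{d^2}{dx^2}\,(c+f)^{-1/2}
= \tfrac{3}{4}(c+f)^{-5/2}(f')^2 - \tfrac{1}{2}(c+f)^{-3/2}f'' ,
\]
so, multiplying through by the positive factor $(c+f)^{5/2}$, the function $1/\sqrt{c+f}$ is concave if and only if $\tfrac{3}{2}(f')^2 \le (c+f)\,f''$. Taking $c=0$, this says that concavity of $1/\sqrt{\mu_2}$ is exactly the hypothesis $\tfrac{3}{2}(f')^2 \le f\,f''$.

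The key step is then purely algebraic. Convexity of $\mu_2$ supplies $f''\ge 0$, and $c\ge 0$, whence
\[
(c+f)\,f'' = f\,f'' + c\,f'' \ge f\,f'' \ge \tfrac{3}{2}(f')^2 ,
\]
which is precisely the inequality needed for $1/\sqrt{c+f}$ to be concave. This completes the argument.

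There is no serious obstacle here; the only thing to notice is that the two hypotheses pull in the same direction. The extra term $c\,f''$ is nonnegative exactly because $\mu_2$ is convex, so enlarging $\mu_2$ by a nonnegative constant can only strengthen the concavity inequality for the reciprocal square root. I would also remark that the result is in fact a special case of Lemma~\ref{lem:lemBorell1}: taking that lemma's $f$ to be $\mu_2$, its $g$ to be the constant function $c$ (positive and $C^2$, with $c^{-1/2}$ constant and hence concave), and $\alpha=-1/2$, one obtains concavity of $(c+\mu_2)^{-1/2}$ at once, the case $c=0$ being trivial.
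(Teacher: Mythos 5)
Your proof is correct and follows essentially the same route as the paper's: the paper likewise observes that the two hypotheses, written as $\mu_2\mu_2''-\tfrac{3}{2}(\mu_2')^2\ge 0$ and $\mu_2''\ge 0$, are preserved when $\mu_2$ is replaced by $c+\mu_2$, which is exactly your computation $(c+f)f''\ge f f''\ge \tfrac{3}{2}(f')^2$. Your closing remark that the result also follows from Lemma~\ref{lem:lemBorell1} with $g\equiv c$ and $\alpha=-1/2$ is a valid observation beyond what the paper states, but the core argument is the same.
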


\begin{proof}
Since ${\mu_2} {\mu_2}'' -\frac{3}{2}({\mu_2}')^2\ge{0}$ and ${\mu_2}''\ge{0}$ one sees that the same
inequalities are satisfied when, with $c\ge{0}$, one replaces ${\mu_2}$ by $c+{\mu_2}$.
\end{proof}

Lemma~\ref{lem:lemBorellc} doesn't get used, but is in the draft as it may indicate that concavity in the coordinate
directions isn't affected by the $\mu$ contributions of the other coordinates.

My first efforts to remove the restriction to $N\le{2}$ in Lemma~\ref{lem:lemBorell},
before noting the result of~\cite{Li82}, were clumsy.
I first checked that it is OK (with $\mu_{(2)}$ for $N=3$ but brute force calculation isn't going to get general $N$.
Some form of induction might.

\begin{lemma}\label{lem:lemBorell}
Let $N\le{2}$. If $\mu_2$ is positive, decreasing, convex and logconvex and
$1/\sqrt{\mu_2}$ is concave on $(0,\infty)$ then 
$$\sigma({\mathbf a})
= \frac{1}{\sqrt{\sum_{j=1}^N \mu_2(a_j)}} $$
is concave on the positive orthant.
\end{lemma}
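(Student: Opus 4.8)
The plan is to reduce the concavity of $\sigma = \Lambda^{-1/2}$, where $\Lambda(\mathbf{a}) = \sum_{j=1}^N \mu_2(a_j)$, to the positive semidefiniteness of a single symmetric matrix, exactly as in the proof of Lemma~\ref{lem:lemBorell1}. Writing $\alpha = -\tfrac12$ and exploiting the separable structure, the gradient and Hessian of $\Lambda$ are $D\Lambda = (\mu_2'(a_1),\dots,\mu_2'(a_N))^T$ and $D^2\Lambda = \mathrm{diag}(\mu_2''(a_1),\dots,\mu_2''(a_N))$. With $M(\Lambda,\alpha) = \Lambda\,D^2\Lambda + (\alpha-1)\,D\Lambda\,(D\Lambda)^T$ one has the identity $\mathrm{hessian}(\Lambda^\alpha) = \alpha\,\Lambda^{\alpha-2}\,M(\Lambda,\alpha)$. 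Since $\alpha<0$ and $\Lambda>0$, the scalar prefactor $\alpha\Lambda^{\alpha-2}$ is negative, so $\sigma=\Lambda^\alpha$ is concave if and only if $M := M(\Lambda,-\tfrac12)$ is positive semidefinite. The single-variable hypothesis that $1/\sqrt{\mu_2}$ is concave says precisely that $\mu_2\mu_2'' - \tfrac32(\mu_2')^2 \ge 0$ pointwise, while convexity gives $\mu_2''\ge 0$; these, with positivity of $\mu_2$, are all I expect to need.

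First I would dispatch $N=1$, where $M = m_1 m_1'' - \tfrac32(m_1')^2 \ge 0$ is literally the concavity of $\mu_2^{-1/2}$. For $N=2$, abbreviate $m_j=\mu_2(a_j)$, $m_j'=\mu_2'(a_j)$, $m_j''=\mu_2''(a_j)$ and $\Lambda=m_1+m_2$, so that
\[
M = \begin{pmatrix} \Lambda m_1'' - \tfrac32 (m_1')^2 & -\tfrac32 m_1' m_2' \\ -\tfrac32 m_1' m_2' & \Lambda m_2'' - \tfrac32 (m_2')^2 \end{pmatrix}.
\]
Each diagonal entry is nonnegative because $\Lambda m_1'' - \tfrac32(m_1')^2 = m_2 m_1'' + \big(m_1 m_1'' - \tfrac32(m_1')^2\big) \ge 0$, using $m_2>0$, $m_1''\ge 0$ and the pointwise bound. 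The crux is the determinant. On expanding $M_{11}M_{22}$ the term $+\tfrac94(m_1')^2(m_2')^2$ appears and cancels exactly against $M_{12}^2 = \tfrac94(m_1')^2(m_2')^2$, leaving
\[
\det M = (m_1+m_2)\Big[ m_2''\big(m_1 m_1'' - \tfrac32 (m_1')^2\big) + m_1''\big(m_2 m_2'' - \tfrac32 (m_2')^2\big)\Big].
\]
Both bracketed factors are products of manifestly nonnegative quantities, so $\det M \ge 0$; together with the nonnegative diagonal this gives $M\succeq 0$ and hence the concavity of $\sigma$.

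The step I expect to be delicate is the bookkeeping in this determinant expansion: the whole argument hinges on the cancellation of the $(m_1')^2(m_2')^2$ terms, which is exactly what makes $N=2$ succeed and is also why this elementary route does not obviously extend to $N\ge 3$, where positive semidefiniteness cannot be read off from a single $2\times 2$ minor. For that reason I would remark that the genuinely general route is instead to invoke Lemma~\ref{lem:lemBorell1} directly, taking $f=\mu_2(a_1)$, $g=\mu_2(a_2)$ and $\alpha=-\tfrac12$: since $f^\alpha$ and $g^\alpha$ are each concave (being $\mu_2^{-1/2}$ of a single coordinate), that lemma yields concavity of $(f+g)^\alpha=\sigma$, and an induction on $N$ removes the restriction altogether. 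Finally I would note that the decreasing and log-convexity hypotheses stated in the lemma are not actually used by either argument.
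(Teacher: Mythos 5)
Your proof is correct and follows essentially the same route as the paper's: both reduce the claim to positive semidefiniteness of the $2\times 2$ matrix $\Lambda\,D^2\Lambda-\tfrac{3}{2}\,D\Lambda\,(D\Lambda)^{T}$, check the diagonal entries using $m_2 m_1''+\bigl(m_1 m_1''-\tfrac32(m_1')^2\bigr)\ge 0$, and rely on the exact cancellation of the $\tfrac94(m_1')^2(m_2')^2$ terms in the determinant (the paper organizes this by substituting $A_j=\mu_2''(a_j)-\tfrac{3(\mu_2'(a_j))^2}{2\mu_2(a_j)}\ge 0$ into the matrix entries rather than expanding the determinant directly). Your closing remarks also match the paper's own view: it labels this lemma as superseded material, since the proof of Theorem~\ref{thm:thmBorell} invokes Lemma~\ref{lem:lemBorell1} with $\alpha=-1/2$ exactly as you suggest, which removes the restriction $N\le 2$, and indeed neither argument uses the decreasing or log-convexity hypotheses.
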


\begin{proof}
The result is trivially true when $N=1$. Let $N=2$ and set
\begin{equation}
 {\mu_2(a_1)}'' -\frac{3({\mu_2(a_1)}')^2}{2 \mu_2(a_1)}= A_1\ge{0}, \qquad
 {\mu_2(a_2)}'' -\frac{3({\mu_2(a_2)}')^2}{2 \mu_2(a_2)}= A_2\ge{0} .
\label{eq:Adef}
\end{equation}
To align the calculations with the future application, also define
$$\Lambda({\mathbf a})
=\sum_{j=1}^N \mu_2(a_j) .$$
We are required to establish that the hessian of $\sigma({\mathbf a})$ is negative definite.

Denote the column vector gradient with a $D$, and the hessian by $D^2$,
and transpose with a superscript T.
Define
$$M(\Lambda,n)= -n\,  D\Lambda\,  (D\Lambda)^{T} +\Lambda\,  D^2 \Lambda .$$
Note that
$${\rm hessian}(1/\sqrt{\Lambda}) =  - \frac{1}{2}\Lambda^{5/2}\, M(\Lambda,\frac{3}{2}) . $$
To establish the result we need to prove that $M(\Lambda,3/2)$ is positive (semi-)definite.
Now the second term $D^2\Lambda$ is a diagonal matrix with positive entries.
However $D\Lambda\,  (D\Lambda)^{T}$ is positive semidefinite so that
for $n>0$ this term works against establishing that $M$ is positive definite.
The logconvexity of $\mu_2$ gives $M(\Lambda,1)$ positive definite,
i.e. $\Lambda$ is logconvex.
However, the stronger result on $\sigma$ concerning $M(\Lambda,3/2)$ has eluded us at general $N$,
so we turn now to the calculation at $N=2$,

When $N=2$, abbreviating $M(\mu_2(a_1)+ \mu_2(a_2),n)$ to $M(n)$,
$$M(n)
=
-n\left[ \begin{array}{cc}
(\mu_2'(a_1))^2& \mu_2'(a_1) \, \mu_2'(a_2) \\
\mu_2'(a_1)\, \mu_2'(a_2)& (\mu_2'(a_2))^2
\end{array}\right] + (\mu_2(a_1)+ \mu_2(a_2)\, 
\left[ \begin{array}{cc}
\mu_2''(a_1)&0 \\
0& \mu_2''(a_2)
\end{array}\right]
$$
On eliminating the second derivatives using equation~(\ref{eq:Adef}), we find
$$
M(\frac{3}{2})
= \left[ \begin {array}{cc} 
\,{\frac {2\, (\mu_2 \left( a_1 \right)+ \mu_2\left( a_2 \right) ) A_1+
3\, \mu_2\left( a_2 \right)  \left( {\frac {d}{da_1}}\mu_2 \left( a_1 \right)  \right)^{2}}
 {2 \mu_2\left( a_1\right) }}&
 -\frac{3}{2}\, \left( {\frac {d}{da_1}} \mu_2\left( a_1 \right)  \right) {\frac {d}{da_2}} \mu_2\left( a_2 \right) \\ 
 \noalign{\medskip}
 -\frac{3}{2}\, \left( {\frac {d}{da_1}}{\mu_2} \left( a_1 \right)  \right) {\frac {d}{da_2}}{\it \mu_2}\left( a_2 \right) &
 \,{\frac{ {2\,( \mu_2\left( a_1\right) +\mu_2} \left( a_2 \right) ) A_2+
 3\, \mu_2 \left( a_1 \right)  \left( {\frac {d }{da_2}} \mu_2\left( a_2 \right)  \right)^{2}}
{2  \mu_2\left( a_2 \right) }}
\end {array} \right] 
 . $$
 The diagonal elements are positive and the determinant is also positive, so $M(3/2)$ is positive definite, and the
 result is established.
\end{proof}

\medskip

\noindent{\bf Informal comments}

For rectangles/boxes it may be that some properties are inherited from the
Dirichlet case.
The function
$$ u = \prod_{j=1}^N \cos(\mu_j x_j) $$
along with solving the Robin b.c. problem H($\beta$)
satisfies zero Dirichlet b.c.s on a larger rectangle/box,
with
$$A_j =\frac{\pi}{2\mu_j} . $$
In particular, the fundamental eigenfunction is log-concave in its rectangle.\\
Of course, from the explicit solution, this is very easy to see.
With $f(x)=\cos(\mu x)$, we have $f f'' - (f')^2=-\mu^2$.
More simply $f$ positive and concave implies $f$ is log-concave.
Products of log-concave functions are log-concave.

Calculations (as yet unchecked)

\section*{Remarks on \S\ref{subsec:variational}}

Maple code for this topic is presented via the URL given in the abstract.




\section*{A note on how we discovered the proofs}

The colloboration of the present authors  began when GK noticed that earlier work of his,
joint with Alex McNabb, on heat flow problems, would lead to results for
Benchawan's flows with slip at the boundary.
When checking developments since the work with McNabb, GK noticed that there were
recent isoperimetric inequalities, by Daner and others around 1990 for the
fundamental Robin eigenvalue and from 2015 for the generalization of the St Venant inequality.
Both of these concerned circular disks being the optimizers over cross-sections with the same area.
A short note to an engineering journal which published papers on microchannels
reporting this was rejected with the referee saying ``we know this", presumably on the
basis of numerical work. 

Benchewan had a paper concerning the series solutions for rectangular microchannels.
This gave numerical indications that square cross-sections would be the
optimizers over all rectangles with the same area.
The challenge GK took up was to prove this, at least for the fundamental Robin eigenvalue.
The rest of this note concerns the fundamental eigenvalue.

We discovered the results in the following order.

\begin{itemize}

\item First we established, by direct calculation of formulae for derivatives,
that $\mu(c)$ was convex, even log-convex (AG-convex).
This, however, did not lead to a proof of the isoperimetric theorem.
Higher derivatives of $\mu(c)$ were calculated, not with any application in mind,
but leading to the conjecture that perhaps $\mu(c)$ might be completely monotone.

\item Because the formulae for $\phi_1$ and $\phi_2$ were so simple, it was
immediately noted that they were completely monotone, and shortly after, that
$\phi_2$ was Stieltjes.
To begin with we didn't see that this would lead to a proof of the isoperimetrci
inequality, but in time, the log-convexity of $\phi_2$ yielded its proof.
(We now know 
(i) that the isoperimetric inequality is just the GA-convexity of $\mu_{(2)}$, and
(ii) our proof was really one showing that the inverse of a GA-convex function
is an AG-convex function.)\\
A revised microchannel paper was prepared and is under review.

\item It was noted that the fundamental eigenvalue results generalized to $N$-dimensions.
A short (and we think well written) paper was submitted
(possibly to a `wrong' journal), but was not accepted.

\item Inequalities other than the isoperimetric ones were available for Dirichlet
boundary conditions, $\beta=0$, e.g. that $1/\sqrt{\lambda_1}$ is concave under
Minkowski convex combinations.
The initial proofs of the extension of this -- and other theorems -- to $\beta\ge{0}$
required various convexity properties of $\mu$.
Going back and forth between considering convexity properties of $\phi$ and of $\mu$
first led to Fact 1 of~\citeKeconv.
Eventually it led to considering $(p,q)$-convexity and Theorem 1 of~\citeKeconv.
The generality in~\citeKeconv was very much more than needed for the Robin eigenvalue
application so the paper was split into the two Parts as indicated in this report.

\end{itemize}

\end{document}